\newcommand{\norm}[1]{\left\Vert#1\right\Vert}
\renewcommand{\sp}[2]{\left<#1, #2\right>}
\newcommand{\Psiiid}{\overline\Psi_{N}}
\newcommand{\prob}[1]{\P\left(#1\right)}
\newcommand{\inv}{^{-1}}
\newcommand{\R}{\mathbb{R}}
\newcommand{\E}{\mathbb{E}}
\newcommand{\N}{\mathbb{N}}
\newcommand{\Z}{\mathbb{Z}}
\renewcommand{\P}{\mathbb P}
\newcommand{\BX}{\overline{X}}
\newcommand{\wphi}{\widehat{\varphi}}
\DeclareMathOperator*{\argmin}{argmin}
\newcommand{\abs}[1]{\left| #1 \right|}
\newcommand{\lan}{\langle}
\newcommand{\ran}{\rangle}
\renewcommand{\d}{\textnormal{d}}
\newcommand{\opr}{{\operatorname{op}}}
\newcommand{\vphi}{\varphi}
\newcommand{\vp}{\varphi}
\newcommand{\de}{\delta}
\newcommand{\De}{\Delta}
\newcommand{\ga}{\gamma}
\newcommand{\al}{\alpha}
\theoremstyle{plain}
\newtheorem{prop}{Proposition}[section]
\newtheorem{cor}[prop]{Corollary}
\newtheorem{theo}[prop]{Theorem}
\newtheorem{lem}[prop]{Lemma}
\newcommand{\bee}{\begin{equation}}
\newcommand{\eee}{\end{equation}}
\theoremstyle{definition}
\newtheorem{rem}[prop]{Remark}
\theoremstyle{remark}
\newtheorem{assumption}{Assumption}
\begin{document}
\title[Estimation of particle systems]{On nonparametric estimation of the interaction function in particle system models}
\author[D.~Belomestny]{Denis Belomestny$^{1}$}
\address{$^1$Faculty of Mathematics\\
Duisburg-Essen University\\
Thea-Leymann-Str.~9\\
D-45127 Essen\\
Germany}
\email{denis.belomestny@uni-due.de}

\author[M.~Podolskij]{Mark Podolskij$^{2}$}
\address{$^2$ Department
of Mathematics, University of Luxembourg \\
Luxembourg }
\email{mark.podolskij@uni.lu}

\author[S.-Y.~Zhou]{Shi-Yuan Zhou$^{3}$}
\address{$^3$ Department
of Mathematics, University of Luxembourg \\
Luxembourg }
\email{shi-yuan.zhou@uni.lu}

\date{}
\maketitle

\begin{abstract}
\noindent This paper delves into a nonparametric estimation approach for the interaction function within diffusion-type particle system models. We introduce two estimation methods based upon an empirical risk minimization. Our study encompasses an analysis of the stochastic and approximation errors associated with both procedures, along with an examination of certain minimax lower bounds. In particular, we show that there is a natural metric under which the corresponding minimax estimation error of the interaction function  converges to zero with parametric rate.  This result is rather suprising given  complexity of the underlying estimation problem and rather large classes of interaction functions for which the above parametric rate holds.
 \\ \\

\noindent {\it Keywords:} drift estimation, McKean-Vlasov diffusions, non-linear SDEs, nonparametric statistics, particle systems. \\ \\

\noindent {\it AMS 2010 subject classification:} 62G20,62M05, 60G07, 60H10.
\end{abstract}

\section{Introduction} \label{sec1}
\noindent This article is dedicated to the study of $N$-dimensional interacting particle systems described by the equation 
\begin{equation}\label{model}
dX_t^{i,N} = (\vphi \star \mu_t^N) (X_t^{i,N})\, dt + \sigma dW_t^{i}, \quad i=1,\ldots, N, 
\end{equation}
where \(t\in [0,T], \) $\vphi\colon\R\to\R$ is the interaction potential and $(W^i)_{i=1}^N$ are independent one-dimensional Brownian motions.
Here, $\mu_t^N$ stands for the empirical measure of the particle system at time $t$, given by 
\begin{equation}
 \mu_t^N := \frac 1N \sum_{i=1}^N \de_{X_t^{i,N}},
\end{equation}
and $\vphi \star \mu (x) := \int_{\R} \vphi(x-y) \mu(dy)$. 
We make the assumption that we observe 
\(N\)  paths $(X_t^{i,N}, t\in [0,T],i=1,\ldots,N) $ and aim to estimate the unknown interaction function $\vphi$ as $N\to \infty$ with $T>0$ being fixed. It is noteworthy that, given complete path observations, the diffusion coefficient $\sigma$ can be considered known.
\par
Interacting particle systems of the type \cref{model} play an important role in probability theory and applications.  Initially introduced by McKean in pioneering work \cite{M66}, these systems served as models for plasma dynamics. In more recent years, diffusion-type interacting particle systems have found wide-ranging applications in finance \cite{CDLL19, DGZZ, FS13, GSS19, GH11}, social science \cite{CJLW17}, neuroscience \cite{BFFT12}, and population dynamics \cite{ME99}, among others. From a probabilistic perspective, the particle system delineated in \cref{model} is inherently linked to its corresponding mean-field limiting equation—a one-dimensional McKean-Vlasov stochastic differential equation (SDE),
\begin{equation} \label{eq:McKeanEq}
\left\{
\begin{array}{ll}
	dX_t =  (\vphi \star \mu_t )(X_t)\, dt + \sigma dB_t,\\ \textnormal{Law}(X_t)=\mu_t,
\end{array}
\right.
\end{equation}
where $B$ is a $1$-dimensional Brownian motion. Under suitable assumptions on the function $\vphi$, a well-known phenomenon known as the \textit{propagation of chaos} emerges. This phenomenon indicates that, as $N \rightarrow \infty$, the empirical probability measure $\mu_t^{N}$ weakly converges to $\mu_t$ for any $t>0$, and the McKean-Vlasov stochastic differential equation (SDE) \cref{eq:McKeanEq} characterizes the asymptotic trajectory of an individual particle. Classical treatments of the propagation of chaos and McKean-Vlasov SDEs can be found in the monographs \cite{K10,S91}. Furthermore, the measure
$\mu_t$ is absolutely continuous with respect to the Lebesgue measure for all $t>0$, assuming certain regularity conditions on $\vphi$.
Moreover, the density of $\mu_t$, denoted again by $\mu_t$
(with a slight abuse of notation), satisfies the granular media equation:
\begin{eqnarray}
\label{eq:pde}
\partial_t\mu_t=\partial_x\{(\sigma^2/2) \partial_x\mu_t+(\varphi\star\mu_t)\mu_t\}.
\end{eqnarray}
The realm of statistical inference for interacting particle systems remains a less explored domain. While a considerable number of recent articles delve into parametric estimation for particle systems and McKean-Vlasov stochastic differential equations under diverse model and sampling assumptions (see, for instance, \cite{AHPP22, B11, C21, CG23, DM22b, GL21a, GL21b, K90, L22, SKPP21, WWMX16}), non- and semiparametric estimation of the interaction potential $\vphi$ presents a notably more intricate challenge. This complexity arises from the convolution with measures $\mu_t^N$ or $\mu_t$ and has only been the subject of investigation in three recent papers. Specifically, the articles \cite{ABPPZ23,BPP22} explore the semiparametric estimation of $\vphi$ given observations $X_T^{i,N},i=1,\ldots, N,$ with $N,T\to \infty$. The methodology in \cite{ABPPZ23,BPP22} heavily relies on the existence of the invariant density, necessitating the asymptotic regime $T \to \infty,$ and employs the deconvolution method.
Another relevant work, \cite{DM22a}, focuses on the same observation scheme as the current paper, presenting a nonparametric approach for the drift function $\vphi \star \mu_t$ but demonstrating consistency only for an estimator of the interaction function $\vphi$.
\par
This paper aims to investigate nonparametric estimation of the function $\vphi$ using the sampling scheme $X_t^{i,N}, t\in [0,T], i=1,\ldots,N,$ with $N\to \infty$ and $T$ held fixed. We propose two interconnected methods, both based on empirical risk minimization over sieves $(S_m)_{m\geq 1}$, employing an unconventional underlying loss function $\|\cdot\|_{\star}$ that depends on the unknown measure $(\mu_t)_{t\in [0,T]}$.
Our first approach addresses the scenario where the functional spaces $(S_m)_{m\geq 1}$ are compact 
 with respect to $\|\cdot\|_{\infty}$. In this case, the analysis of the proposed estimator's rate heavily relies on empirical processes theory for (degenerate) U-statistics.
 The second approach investigates a more traditional setting, where
$(S_m)_{m\geq 1}$ are finite-dimensional vector spaces. Here, we require a further truncation of the empirical risk minimizer, akin to the approach introduced in \cite{CG20b} for classical stochastic differential equations, where the coefficients are independent of the laws $(\mu_t)_{t\in [0,T]}$. We derive convergence rates with respect to the loss function $\|\cdot\|_{\star}$, involving an examination of stochastic and approximation errors. We will  demonstrate that the resulting convergence rate is often $\sqrt{\log(N)/N}$
in the first scenario, which essentially matches the minimax lower bound. However, for the second method, the convergence rate decreases due to the truncation procedure.
Additionally,  
we will establish that the minimax lower bound for the traditional $L^2$-loss is logarithmic, consistent with the findings of \cite{BPP22}.
\par
The paper is structured as follows. Section \ref{sec2} concentrates on a risk minimization approach over  compact functional classes. This section elucidates the core concepts, establishes convergence rates in terms of metric entropy, and analyses the approximation error associated with the estimation method. In Section \ref{sec2ab}, we introduce an alternative approach that involves risk minimization over vector spaces coupled with an additional dimension truncation. Uniform convergence rates for the resulting estimator are presented. Section \ref{sec.low} delves into an exploration of minimax lower bounds. Specifically, we demonstrate that the estimator developed in Section \ref{sec2} attains essentially optimal rates with respect to the $\|\cdot\|_{\star}$-norm. Additionally, we establish a logarithmic lower bound when considering the traditional $L^2$-loss. For the detailed proofs of the main results, we refer to Section \ref{sec5.proofs}.

\subsubsection*{Notation}
All random variables and stochastic processes are defined on a filtered probability space $(\Omega,\mathcal{F},
(\mathcal{F}_t)_{t\geq 0}, \mathbb{P})$. Throughout, positive constants are represented by $C$ (or $C_p$ if dependent on an external parameter $p$), though their values may vary across lines. We use the notation
$C^k(\R)$ (resp. $C_c^2(\R)$) to denote the space of $k$ times continuously differentiable functions (resp. the space of $k$ times continuously differentiable functions with compact support). 
The sup-norm of a function $f:\mathbb{R}\to \mathbb{R}$ is denoted as $\|f\|_{\infty}$ or $\|f\|_Q:=\sup_{x\in Q} |f(x)|$. When referring to a probability measure or density $\mu$, $\widehat{\mu}$ denotes its Fourier transform:
$$\widehat \mu (z) = \int_\R \exp(izx) \mu(x) \, dx.$$ 
Given a matrix $A\in \R^{k\times k}$, we denote by $\|A\|_{\text{op}}$ the operator norm of $A$; we write $A\succcurlyeq B$ if $A-B$ is a positive semidefinite matrix.
For two sequences $(a_n)_{n\in \N}$ and $(b_n)_{n\in \N}$
we write $a_n \lesssim b_n$ if there exists a constant $C>0$ with $a_n\leq Cb_n$ for all $n\in \N$.
The Orlicz norm of a real valued random variable $\eta$ with respect to a nondecreasing, convex
function $\psi$ on $\mathbb{R}$ with \(\psi(0)=0\) is defined by 
\[
\Vert
\eta\Vert_{\psi}:=\inf\left\{t>0:\mathbb{E} \psi\bigl(|\eta|/t\bigr)  \leq 2\right\}.
\]
When \(\psi_p(x):=x^p\) with \(p\geq 1\) the corresponding Orlicz norm is (up to a constant) the \(L^p\)-norm 
\(\|\eta\|_p=(\E[|\eta|^p])^{1/p}.\)
We say that $\eta$ is
\emph{sub-Gaussian} if $\Vert\eta\Vert_{\psi_{e,2}}<\infty$ with \(\psi_{e,2}:=\exp(x^2)-1.\) In particular,
this implies that for some constants $C,c>0$,
\[
\mathbb{P}(|\eta|\geq t)\leq2\exp\left(  -\frac{ct^{2}}{\Vert\eta
\Vert_{\psi_{e,2}}^{2}}\right)  \text{\ \ and \ \ }\mathbb{E}[|\eta|^{p}%
]^{1/p}\leq C\sqrt{p}\Vert\eta\Vert_{\psi_{e,2}},\quad p\geq1.
\]
Consider a real valued random process $(X_{t})_{t\in\mathcal{T}}$ on a metric
 space $(\mathcal{T},\mathsf{d})$. We say that the process has
\emph{sub-Gaussian increments} if there exists $C>0$ such that
\[
\Vert X_{t}-X_{s}\Vert_{\psi_{e,2}}\leq C\mathsf{d}(t,s),\quad\forall
t,s\in\mathcal{T}.
\]
Let $(\mathsf{Y},\rho)$ be a metric space and $\mathsf{X\subseteq Y}$. For
$\varepsilon>0$, we denote by $\mathcal{N}(\mathsf{X},\rho,\varepsilon)$ the
covering number of the set $\mathsf{X}$ with respect to the metric $\rho$,
that is, the smallest cardinality of a set (or net) of $\varepsilon$-balls in
the metric $\rho$ that covers $\mathsf{X}$. Then $\log\mathcal{N}%
(\mathsf{X},\rho,\varepsilon)$ is called the {\it metric entropy}  of $\mathsf{X}$ and
\[
\mathsf{DI}(\mathsf{X},\rho,\psi):=\int_{0}^{\operatorname{diam}(\mathsf{X})}\psi^{-1}\bigl(\mathcal{N}(\mathsf{X},\rho,u)\bigr)\,du
\]
with $\operatorname{diam}(\mathsf{X}):=\max
_{x,x^{\prime}\in\mathsf{X}}\rho(x,x^{\prime}),$ is called the Dudley
integral  with respect to \(\psi.\) For example, if $|\mathsf{X}|<\infty$ and $\rho(x,x^{\prime
})=1_{\{x\neq x^{\prime}\}}$ we get $\mathsf{DI}(\mathsf{X},\rho,\psi_{e,2})=\sqrt
{\log|\mathsf{X}|}.$

\section{Risk minimization over compact functional classes} \label{sec2}

\noindent We start by introducing assumptions on the underlying interacting particle system that insure the propagation of chaos phenomenon. We assume that the initial law of the model is given by $\textnormal{Law}(X_0^{1,N},\ldots,X_{0}^{N,N})=\mu_0^{\otimes N}$, where $\mu_0^{\otimes N}$ denotes the $N$-fold product measure of some probability measure $\mu_0$ with
$\int x^k \mu_0(dx)<\infty$ for all $k \in \N$.
Furthermore, we assume the following condition.

\begin{assumption}
	\label{assLip}
	 The interaction function $\vphi:\R \to \R$ is globally Lipschitz continuous  and bounded, that is, 
\begin{eqnarray*}
|\vphi(x)-\vphi(y)|\leq L_\vphi|x-y|,\quad |\vphi(x)|\leq K_\vphi,\quad x,y\in \R
\end{eqnarray*}
for some finite $L_\vphi,K_\vphi>0.$
\end{assumption}

\noindent This condition guarantees the validity of propagation of chaos in the sense that, for all $t\in[0,T]$, $\mu_t^N$ converges weakly to $\mu_t$ (see e.g. \cite[Theorem 3.1]{CD22}).

\subsection{Construction of the estimator and main results}
We initiate our exposition by outlining the fundamental principles of our estimation methodology. To begin, we consider a sequence of spaces $(S_m)_{m\geq 1}$ (sieves). The crucial idea of our approach lies in the following minimization strategy:
\begin{equation}
\min_{f\in S_N} \frac{1}{NT}\sum_{i=1}^N \int_0^T \left( f \star \mu_t^N (X_t^{i,N}) - \vphi \star \mu_t^N (X_t^{i,N})   \right)^2 dt.
\end{equation}
However, the above risk function cannot be directly computed from the data since the interaction function $\vphi$ is unknown. We derive an empirical (noisy) version of the minimization problem by omitting the irrelevant term $ \vphi \star \mu_t^N (X_t^{i,N})^2$ in the integrand and minimizing the resulting quantity:
\begin{align*} 
\ga_N(f):= 
 \frac{1}{NT}\sum_{i=1}^N\left(  \int_0^T f \star \mu_t^N (X_t^{i,N})^2 \, dt - 2\int_0^T f \star \mu_t^N (X_t^{i,N})\, dX_t^{i,N}    \right) \nonumber
\end{align*}
over $S_N$. For further analysis, we introduce the following bilinear forms:
\begin{align*}
\lan f,g \ran_N &:=  \frac{1}{NT} \sum_{i=1}^{N} \int_0^T (f \star  \mu_t^N) (X_t^{i,N})
 (g \star  \mu_t^N) (X_t^{i,N})\, dt, \\[1.5 ex]
 \lan f,g \ran_{\star} &:= \frac{1}{T} \int_0^T \int_{\R} (f \star  \mu_t) (x)
 (g \star  \mu_t )(x) \mu_t(x)\, dx\, dt.
\end{align*}
We set  $\|f\|_N^2:= \lan f,f \ran_N$ and $\|f\|_{\star}^2:= \lan f,f \ran_{\star}$. With these notations at hand, we deduce the identity
\bee
\label{eq:ident-norm}
\|f-\vp\|_{N}^2 = \ga_N(f) + \ga'_N(f)+ \|\vp\|_{N}^2 
\eee
where $\ga'_N(f)$ is the martingale term defined via
\[
\ga'_N(f):= \frac{2\sigma}{NT} \sum_{i=1}^N \int_0^T (f \star \mu_t^N)(X_t^{i,N})\, dW_t^i.
\]
Finally, our estimator $\vp_N$ is defined as follows:
\bee \label{defphiN}
\vp_N:= \text{argmin}_{f\in S_N} \ga_N(f), \qquad \vp_{\star}:= \text{argmin}_{f\in S_N} \|f-\vp\|_{\star}^2.
\eee
The identity \eqref{eq:ident-norm} means that by minimizing \(\ga_N(f)\) we minimize (up to a martingale term $\ga'_N(f)$) \(\|f-\vp\|_{N}\) which is close to \(\|f-\vp\|_{\star}\) for \(N\) large enough.
\par
Before we proceed with the asymptotic analysis, we give some remarks about a rather unconventional loss function $\|\cdot\|_{\star}$ and its relation to the classical $L^2(\R)$-norm.

\begin{rem} \label{rem1}
The upcoming analysis will reveal that the risk $\|\cdot\|_{\star}$ is the intrinsic norm for evaluating error bounds. Unlike conventional literature on empirical risk minimization, it is crucial to note that this norm is elusive since the laws $\mu_t$ remain unobserved. This presents one of the principal mathematical challenges in our statistical analysis. \qed 
\end{rem}

\begin{rem} \label{rem2}
The norm $\| \cdot \|_{\star}$ can be bounded from above by the $L^2(\R)$-norm. Indeed, 
if we assume that the densities $(\mu_t)_{t\in[0,T]}$ satisfy $\int_0^{T}\|\mu_t\|_{\infty}\, dt<{\infty}$, 
we conclude by the Young's convolution inequality that 
\[
\|f\|_{\star}^2 \leq \frac 1T \int_0^T \|\mu_t\|_{\infty} \|f\star \mu_t\|^2_{L^2(\R)}\, dt
\leq \|f\|^2_{L^2(\R)}\frac{1}{T} \int_0^{T}\|\mu_t\|_{\infty} \,dt. 
\] 
Nonetheless, these two norms are far from being equivalent. To illustrate this point, we assume, for simplicity, that $\mu_t=\mu$ represents the density of the standard normal distribution for all $t$. Now, consider the function $f(x)=1_{[2M,3M]}(x)$ for some $M>0$. In this scenario, it is evident that $\| f \|_{L^2(\R)}^2=M$. On the other hand, there exist constants $c_1, c_2>0$ such that
\begin{align*}
\|f\|_{\star}^2 &= \int_{[-M,M]} (f \star  \mu)^2 (x)
  \mu(x)\,dx + \int_{[-M,M]^c} (f \star  \mu)^2 (x)
  \mu(x)\, dx \\
  &\leq c_1 \exp(-c_2 M^2).
\end{align*}
In particular,  $\| f \|_{L^2(\R)} \to \infty$  while $\|f\|_{\star} \to 0$ as $M\to \infty$ at an exponential rate. 
\qed 
\end{rem} 
To formulate our main statements, we assume that the spaces  $(S_m)_{m\geq 1}$
 are compact with respect to $\|\cdot\|_{\infty}$ 
 with $\|f\|_{\infty}\leq K_{\vphi}$ for $f\in S_m$.
 In certain cases, we employ the notation $S_m(K_{\vphi})$ to explicitly express the dependency of spaces on the parameter $K_{\vphi}$. We assume that the following property holds:
 \[
 f,g \in S_m(K_{\vphi})  \implies f\pm g \in S_m(2K_{\vphi}).
 \]
Subsequently, we establish the bounds for the error term \(\|\vp_N-\vp\|_{\star}.\)
\begin{theo} \label{Th2.3}
It holds under Assumption~\ref{assLip},  
\begin{multline*}
\Bigl\{\E\bigl[\|\vp_N-\vp\|^q_{\star}\bigr]\Bigr\}^{1/q}\leq  \|\vp_{\star}-\vp\|_{\star}+C \left(\frac{K_\vphi\,\mathsf{DI}(S_N(2K_{\vphi}),\|\cdot\|_\infty,\psi_{e,2})}{\sqrt{N}} \right.
\\
\left.+\frac{p^2 K_\vphi\,  \mathsf{DI}(S_N(2K_{\vphi}),\|\cdot\|_\infty,\psi_{p/2})}{N}+\frac{p^3 K_\vphi\,  \mathsf{DI}(S_N(2K_{\vphi}),\|\cdot\|_\infty,\psi_{p/3})}{N^{3/2}} \right)
\end{multline*}
for any $p>2,$ \(2\leq q\leq p,\) where $C>0$ is an absolute  constant  and $\vphi^\star$ stands for the best approximation of $\vphi$ as defined in \eqref{defphiN}. 
\end{theo}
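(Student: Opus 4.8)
The plan is to perform a standard empirical-risk-minimization analysis, but carrying out the stochastic control in the $\|\cdot\|_\star$ norm while dealing with the discrepancy between $\|\cdot\|_N$ and $\|\cdot\|_\star$. I would start from the basic inequality coming from the definition of $\vp_N$: since $\ga_N(\vp_N) \le \ga_N(\vp_\star)$, the identity \eqref{eq:ident-norm} gives
\[
\|\vp_N - \vp\|_N^2 \le \|\vp_\star - \vp\|_N^2 + \ga_N'(\vp_\star) - \ga_N'(\vp_N).
\]
The martingale term $\ga_N'(\vp_\star) - \ga_N'(\vp_N) = \ga_N'(\vp_\star - \vp_N)$ is linear in its argument, and $\vp_\star - \vp_N \in S_N(2K_\vphi)$ by the assumed closure property. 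So the task reduces to controlling $\sup_{f \in S_N(2K_\vphi)} \ga_N'(f)$ (suitably normalized by $\|f\|_\star$ or $\|f\|_N$) together with the comparison of the empirical norm $\|\cdot\|_N$ and the limiting norm $\|\cdot\|_\star$.

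The main work is the uniform bound on the martingale process $f \mapsto \ga_N'(f) = \frac{2\sigma}{NT}\sum_i \int_0^T (f\star\mu_t^N)(X_t^{i,N})\,dW_t^i$ over the compact class $S_N(2K_\vphi)$. I would first check that this process has sub-Gaussian increments with respect to a suitable (random) metric: for $f,g \in S_N(2K_\vphi)$, by the Burkholder–Davis–Gundy / Dambis–Dubins–Schwarz reasoning and the boundedness of the kernels, $\|\ga_N'(f) - \ga_N'(g)\|_{\psi_{e,2}} \lesssim \frac{\sigma}{\sqrt N}\big(\frac{1}{NT}\sum_i\int_0^T((f-g)\star\mu_t^N)^2(X_t^{i,N})dt\big)^{1/2} = \frac{\sigma}{\sqrt N}\|f-g\|_N$, and $\|f-g\|_N \le \|f-g\|_\infty$ up to the total mass of $\mu_t^N$ which is one, hence $\lesssim \|f-g\|_\infty$. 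Then a chaining argument (Dudley's entropy integral for sub-Gaussian processes, the last inequality in the Notation section providing the relevant normalization) yields
\[
\E\Big[\sup_{f\in S_N(2K_\vphi)} \ga_N'(f)\Big] \lesssim \frac{\sigma}{\sqrt N}\,\mathsf{DI}(S_N(2K_\vphi),\|\cdot\|_\infty,\psi_{e,2}),
\]
and for higher moments one invokes the analogous tail/moment version of the chaining bound, which is where the $\psi_{p/2}$ and $\psi_{p/3}$ Dudley integrals and the extra $N^{-1/2}$, $N^{-1}$ factors enter (these are the subleading terms from the polynomial-tail part of the maximal inequality for martingale suprema, reflecting that $\ga_N'(f)$ is a sum of $N$ martingale increments whose higher cumulants must be accounted for). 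Using $\ga_N'(\vp_\star - \vp_N) \le \big(\sup_f \ga_N'(f)/\|f\|_\star\big)\cdot\|\vp_\star - \vp_N\|_\star$ together with $\|\vp_\star-\vp_N\|_\star \le \|\vp_\star - \vp\|_\star + \|\vp-\vp_N\|_\star$ and absorbing, I would close the estimate after transferring from $\|\cdot\|_N$ to $\|\cdot\|_\star$.

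The transfer step is the second technical ingredient: I need $\|f\|_N^2 \ge c\|f\|_\star^2$ (or an expectation/high-probability version thereof) uniformly over $f \in S_N(2K_\vphi)$. The difference $\|f\|_N^2 - \|f\|_\star^2$ splits into (i) replacing $\mu_t^N$ by $\mu_t$ inside the convolutions and the integrating measure — controlled by propagation of chaos from Assumption~\ref{assLip}, using Lipschitz continuity of $\vphi$ and of $f$ (uniformly, $f\in S_N$ being $\|\cdot\|_\infty$-bounded and, one expects, equi-Lipschitz or at least handled via the entropy integral) — and (ii) replacing the empirical average over particles and the path by the expectation, a U-statistic-type concentration handled again by a uniform (chaining) bound over $S_N(2K_\vphi)$. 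This is flagged in the introduction as relying on empirical-process theory for degenerate U-statistics, so I would quote the corresponding maximal inequality rather than reprove it.

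The step I expect to be the main obstacle is precisely the uniform control of $\|\cdot\|_N$ versus $\|\cdot\|_\star$: the norm $\|\cdot\|_\star$ is deterministic but unknown, $\|\cdot\|_N$ is observable but random and built from the interacting (hence non-i.i.d.) particles, so one cannot simply invoke classical i.i.d. empirical process results — the propagation-of-chaos rate and the U-statistic decomposition must be intertwined carefully so that the resulting error is negligible compared with the leading $\sqrt{\mathsf{DI}/N}$ term and does not degrade the stated moment bound. The rest is bookkeeping: combining the basic inequality, the martingale supremum bound, the norm comparison, and the bias term $\|\vp_\star - \vp\|_\star$, then taking $q$-th roots with $2 \le q \le p$.
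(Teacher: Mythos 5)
Your overall architecture (oracle inequality, martingale supremum via chaining, comparison of $\|\cdot\|_N$ with $\|\cdot\|_\star$, bias term) matches the paper's, but two of the key technical points are either misattributed or left unresolved. First, you assign the $\psi_{p/2}$- and $\psi_{p/3}$-Dudley-integral terms to ``the polynomial-tail part of the maximal inequality for martingale suprema''. This is not where they come from: the martingale $\ga_N'(f)=2M_N(f)/\sqrt{NT}$ has quadratic variation bounded by $\sigma^2\|f\|_\infty^2/(NT)$, so by Bernstein's inequality for continuous martingales it has genuinely sub-Gaussian increments in $\|\cdot\|_\infty$ and contributes \emph{only} the $\psi_{e,2}$ term at rate $N^{-1/2}$. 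The second and third terms arise from the norm-comparison step: the paper decomposes $\|g\|_N^2-\|g\|_\star^2$ (with $\De_t(x,y)=g(x-y)-(g\star\mu_t)(x)$) into degenerate U-statistics $U_{1,N},U_{2,N},U_{3,N}$ of orders $1$, $2$, $3$ plus $O(N^{-1})$; the order-$r$ degenerate U-process scales as $N^{-r/2}$ and its supremum is controlled by Sherman-type maximal inequalities with Orlicz exponent $\psi_{p/r}$ (Theorem~\ref{thm:ustat-max}), which is exactly why the factors $N^{-1}$, $N^{-3/2}$ and the norms $\psi_{p/2}$, $\psi_{p/3}$ appear. Your sketch of the norm comparison as ``propagation of chaos plus a U-statistic concentration'' gestures at this but does not produce these terms, and without the explicit three-term degenerate decomposition you cannot recover the stated bound.

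Second, the step you flag as ``the main obstacle'' — that the particles are interacting, so i.i.d. empirical-process and U-statistic theory does not apply directly — is left unresolved, and qualitative propagation of chaos will not close it: weak convergence of $\mu_t^N$ to $\mu_t$ gives no uniform moment bounds at rate $N^{-1/2}$ over $S_N(2K_\vphi)$. The paper's resolution is to prove everything first under the product measure $\overline{\P}^N$ of i.i.d. copies of the McKean--Vlasov limit (where the U-statistic machinery is legitimate) and then transfer to the particle-system law $\P^N$ via the Girsanov change of measure $d\P^N/d\overline{\P}^N=\mathcal{E}_T(\overline{M}^N)$, using the exponential moment bound on $\langle\overline{M}^N\rangle$ from \cite{DM22a} and Cauchy--Schwarz to carry $L^{2r}$ bounds across. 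This device is an essential ingredient, not bookkeeping. A smaller point: your plan to bound $\ga_N'(\vp_\star-\vp_N)$ by $\bigl(\sup_f \ga_N'(f)/\|f\|_\star\bigr)\|\vp_\star-\vp_N\|_\star$ and ``absorb'' is riskier than needed — the normalized supremum is not controlled by a plain Dudley bound when $\|f\|_\star$ is small and would require peeling; the crude bound $\sup_{f\in S_N}|\ga_N'(f)|$ used in \eqref{eq:ineq-oracle} suffices for the additive inequality claimed.
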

\noindent 
Let us outline the key concepts behind the proof of Theorem \ref{Th2.3}. We commence with the inequality:
\begin{equation}
\label{eq:ineq-oracle}
\|\vp_N -\vp\|_{\star}^2 \leq \|\vp_{\star} -\vp\|_{\star}^2 
+ 2\left(\sup_{f\in S_N} \left| \|f-\vp\|_{N}^2 - \|f -\vp\|_{\star}^2  \right| +  \sup_{f\in S_N} 
|\ga'_N(f)| \right).
\end{equation}
This inequality decomposes the estimation error into an approximation error \(\|\vp_{\star} -\vp\|_{\star}^2\) and stochastic errors related to the last two terms. To handle the stochastic errors, we initially consider i.i.d.~observations from the McKean-Vlasov SDE \eqref{eq:McKeanEq}. Remarkably, the term \(\|f-\vp\|_{N}^2 - \|f -\vp\|_{\star}^2\) turns out to be a linear combination of (degenerate) U-statistics, while \(\ga'_N(f)\) represents a martingale, as noted earlier. Concentration inequalities are applied to both terms, leading to the moment bound in Theorem \ref{Th2.3} via the metric entropy. In the final step of the proof, we employ a change of measure device initially proposed in \cite{DM22a} to transfer the statement from i.i.d.~observations of the McKean-Vlasov SDE \eqref{eq:McKeanEq} to observations of the original particle system \eqref{model}.
In case we have a bound on the covering number of the space $S_N$, we immediately obtain the following corollary. 
\begin{cor}
\label{cor: vc-class}
Suppose that \(\mathcal{N}\bigl(S_N(2K_{\vphi}),\|\cdot\|_\infty,\varepsilon\bigr) \lesssim \varepsilon^{-D_N}\) for all \(N\) and an increasing sequence of positive numbers \(D_N\) satisfying \(D^2_N N^{-1/2}\to 0\) as \(N\to \infty.\) Then it holds
\begin{eqnarray*}
\Bigl\{\E\bigl[\|\vp_N-\vp\|^q_{\star}\bigr]\Bigr\}^{1/q}\leq  \|\vp_{\star}-\vp\|_{\star}+C K_\vphi\,\sqrt{\frac{D_N}{N}}.
\end{eqnarray*}
\end{cor}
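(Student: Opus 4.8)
The plan is to derive Corollary \ref{cor: vc-class} directly from Theorem \ref{Th2.3} by bounding the three Dudley integrals \(\mathsf{DI}(S_N(2K_{\vphi}),\|\cdot\|_\infty,\psi)\) for \(\psi \in \{\psi_{e,2},\psi_{p/2},\psi_{p/3}\}\) under the polynomial covering number hypothesis \(\mathcal{N}(S_N(2K_{\vphi}),\|\cdot\|_\infty,\varepsilon)\lesssim \varepsilon^{-D_N}\). First I would record that, since all \(f\in S_N(2K_{\vphi})\) satisfy \(\|f\|_\infty\leq 2K_{\vphi}\), the diameter of \(S_N(2K_{\vphi})\) in \(\|\cdot\|_\infty\) is at most \(4K_{\vphi}\); hence every Dudley integral is taken over \([0,4K_{\vphi}]\) (and the integrand vanishes for \(u\) larger than the diameter). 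For the sub-Gaussian case, \(\psi_{e,2}^{-1}(y)\asymp \sqrt{\log(1+y)}\), so \(\psi_{e,2}^{-1}(\mathcal{N}(S_N,\|\cdot\|_\infty,u))\lesssim \sqrt{D_N\log(1/u)+\text{const}}\lesssim \sqrt{D_N}\,\sqrt{\log(1/u)+1}\) for \(u\leq 4K_{\vphi}\), and since \(\int_0^{4K_{\vphi}}\sqrt{\log(1/u)+1}\,du<\infty\) is a finite absolute constant (up to the scaling \(K_{\vphi}\)), we obtain \(\mathsf{DI}(S_N(2K_{\vphi}),\|\cdot\|_\infty,\psi_{e,2})\lesssim K_{\vphi}\sqrt{D_N}\).

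Next I would treat the two polynomial Orlicz functions. Here \(\psi_{p/2}(x)=x^{p/2}\), so \(\psi_{p/2}^{-1}(y)=y^{2/p}\), and therefore \(\psi_{p/2}^{-1}(\mathcal{N}(S_N,\|\cdot\|_\infty,u))\lesssim (\varepsilon^{-D_N})^{2/p}\big|_{\varepsilon=u}=u^{-2D_N/p}\). Provided \(2D_N/p<1\), i.e. \(p>2D_N\), the integral \(\int_0^{4K_{\vphi}}u^{-2D_N/p}\,du\) converges and equals \((4K_{\vphi})^{1-2D_N/p}/(1-2D_N/p)\), which is \(\lesssim K_{\vphi}\) up to a \(p,D_N\)-dependent factor; a cleaner route is to simply note that \(\psi_{p/2}^{-1}(y)\leq y^{2/p}\) and use \(\psi_{p/2}^{-1}(y)\leq \psi_{e,2}^{-1}(y)\cdot C_p\) on the relevant range to reduce to the sub-Gaussian estimate. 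In any case one gets \(\mathsf{DI}(S_N(2K_{\vphi}),\|\cdot\|_\infty,\psi_{p/2})\lesssim C_{p} K_{\vphi}\sqrt{D_N}\) (or \(\lesssim C_p K_{\vphi} D_N\), which also suffices), and analogously \(\mathsf{DI}(S_N(2K_{\vphi}),\|\cdot\|_\infty,\psi_{p/3})\lesssim C_p K_{\vphi}\sqrt{D_N}\).

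Having these three bounds, I would substitute into Theorem \ref{Th2.3} and choose \(p\) to be a fixed constant, say \(p=q+1\) or any fixed \(p>\max(2,q)\), so that the \(p\)-dependent prefactors \(p^2, p^3\) are absorbed into the constant \(C\). The three stochastic terms then become, up to constants, \(K_{\vphi}\sqrt{D_N/N}\), \(K_{\vphi}\sqrt{D_N}/N\), and \(K_{\vphi}\sqrt{D_N}/N^{3/2}\) (or with \(D_N\) in place of \(\sqrt{D_N}\) in the last two, if one uses the cruder bound). The first term dominates: since \(D_N\geq 1\) eventually, \(K_{\vphi}\sqrt{D_N}/N\leq K_{\vphi}\sqrt{D_N/N}\) and likewise for the \(N^{-3/2}\) term, so the sum is \(\lesssim K_{\vphi}\sqrt{D_N/N}\), giving the claimed bound \(\|\vp_{\star}-\vp\|_{\star}+CK_{\vphi}\sqrt{D_N/N}\).

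The main obstacle, such as it is, is purely bookkeeping: one must be a little careful that the Orlicz-inverse functions \(\psi_{p/2}^{-1},\psi_{p/3}^{-1}\) grow slowly enough relative to the polynomial covering number for the Dudley integrals to converge near \(u=0\), which forces \(p>2D_N\) (respectively \(p>3D_N\)) if one insists on the sharp power estimate — though this is harmless since \(p\) is free and we only need \emph{finiteness} up to constants, and the condition \(D_N^2N^{-1/2}\to 0\) is exactly what guarantees the remainder terms are asymptotically negligible compared to the leading \(\sqrt{D_N/N}\) and, more importantly, that the whole stochastic error tends to zero. I do not anticipate any genuine difficulty beyond choosing the constants consistently; the substance of the argument is already contained in Theorem \ref{Th2.3}.
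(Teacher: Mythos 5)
Your overall strategy -- bounding the three Dudley integrals from the covering number hypothesis and substituting into Theorem~\ref{Th2.3} -- is exactly what the paper intends (it offers no written proof, calling the corollary immediate), and your treatment of the sub-Gaussian term is fine: $\psi_{e,2}^{-1}(\mathcal N(u))\lesssim\sqrt{D_N}\sqrt{\log(1/u)+1}$ integrates to $\lesssim K_\vphi\sqrt{D_N}$, which after division by $\sqrt N$ gives the stated leading term. The problem is in your handling of the polynomial Orlicz terms. As you yourself note, $\int_0^{4K_\vphi}u^{-2D_N/p}\,du$ converges only if $p>2D_N$ (and $p>3D_N$ for $\psi_{p/3}$), and $D_N\to\infty$. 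So your proposed choice ``$p=q+1$ or any fixed $p$'' makes both Dudley integrals infinite and the bound from Theorem~\ref{Th2.3} vacuous; the claim that ``the $p$-dependent prefactors $p^2,p^3$ are absorbed into the constant $C$'' is therefore not available. The alternative shortcut you suggest, $\psi_{p/2}^{-1}(y)\leq C_p\,\psi_{e,2}^{-1}(y)$ on the relevant range, is also false: $y^{2/p}$ grows polynomially while $\sqrt{\log(1+y)}$ grows logarithmically, and the covering numbers blow up as $u\downarrow 0$, which is precisely the range that matters.

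The correct bookkeeping -- which you gesture at in your last paragraph but do not carry out -- is to take $p=p_N\asymp D_N$, say $p_N=4D_N$ (legitimate, since Theorem~\ref{Th2.3} holds for every $p>2$ and any fixed $q$ satisfies $q\leq p_N$ eventually). Then $\mathsf{DI}(S_N(2K_\vphi),\|\cdot\|_\infty,\psi_{p_N/2})\lesssim \int_0^{4K_\vphi}u^{-1/2}\,du\lesssim_{K_\vphi}1$ and similarly for $\psi_{p_N/3}$, so the second and third terms of Theorem~\ref{Th2.3} become of order $K_\vphi D_N^2/N$ and $K_\vphi D_N^3/N^{3/2}$ respectively -- \emph{not} $K_\vphi\sqrt{D_N}/N$ and $K_\vphi\sqrt{D_N}/N^{3/2}$ as you write. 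One then checks that $D_N^2/N\leq C\sqrt{D_N/N}$ iff $D_N\lesssim N^{1/3}$ and $D_N^3/N^{3/2}\leq C\sqrt{D_N/N}$ iff $D_N\lesssim N^{2/5}$, both of which follow from the hypothesis $D_N^2N^{-1/2}\to0$ (i.e.\ $D_N=o(N^{1/4})$); this is the actual role of that hypothesis, not merely ``ensuring the stochastic error tends to zero.'' With that substitution your argument closes; as written, the middle paragraph is internally inconsistent and does not yield the stated bound.
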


\noindent
Our focus now shifts to examining the approximation error $\|\vp_{\star}-\vp\|_{\star}$, which will be addressed in the subsequent analysis.

\subsection{Approximation error}

\noindent In this segment, we delve into an examination of the approximation error outlined in Theorem \ref{Th2.3}. The strong contractivity inherent in the norm $\|\cdot\|_{\star}$ introduces a surprising characteristic wherein the error $\|\vp_{\star}-\vp\|_{\star}$ often exhibits exponential decay in the dimensionality of the space $S_N$.
In the following, we adopt the subsequent assumption regarding the initial distribution:

\begin{assumption}
	\label{assmu0}
Assume that the initial distribution is normal, that is,	\[
\mu_0(x)=\frac{1}{\sqrt{2\pi \zeta^2}} \exp(-x^2/(2\zeta^2))
\]
for some \(\zeta>0.\) 
\end{assumption}

\noindent We also consider a condition on the interaction function $\varphi$:
\begin{assumption}
	\label{assphi}
There is a monotone increasing sequence \((\alpha_n)\) with
\[
\alpha_n\to \infty,\quad  \alpha_{n+1}/\alpha_n\to 1, \quad  n\to \infty
\]
such that the following representation holds
\begin{eqnarray*}
(\mu_t\star\varphi)(x)  =\frac{1}{\sqrt{\alpha_n}}\sum_{k=0}^{\infty}c_{k}\exp(i\pi kx/\alpha_n)\widehat{\mu}_t(-k/\alpha_n), \quad x\in [-\alpha_n,\alpha_n] 
\end{eqnarray*}
with coefficients $c_k=c_k(\alpha_n)$ satisfying $\sum_{k=1}^{\infty}|c_k|\lesssim K_\vphi \sqrt{\alpha_n}.$ 
\end{assumption}

\begin{rem}
    Let us examine Assumption \ref{assphi}.  This assumption holds for any function $\varphi$, which  admits a Fourier series representation of the form:
    \begin{align} \label{Fourie}
    \varphi(x)  =\frac{1}{\sqrt{\alpha}}\sum_{k=0}^{\infty}a_{k}\exp(i\pi kx/\alpha),\quad x\in \mathbb{R},
    \end{align}
    with absolutely summable coefficients $(a_k)_{k\geq 0}$ satisfying $\sum_k |a_k|\leq \sqrt{\alpha}K_\vphi$ and some $\alpha>0.$ This type of periodic potentials appear in McKean-Vlasov equations arising in physics, see e.g. the Kuramoto-Shinomoto-Sakaguchi model in \cite{Frank}. Indeed, for such functions we readily deduce the identity
    \[
    (\mu_t\star\varphi)(x) = \frac{1}{\sqrt{\alpha}}\sum_{k=0}^{\infty}a_{k}
    \exp(i\pi kx/\alpha)\widehat{\mu}_t(-k/\alpha).
    \]
    Taking, for instance, $\alpha_n=\alpha n$ and comparing the coefficients,
    we obtain the statement of Assumption \ref{assphi} with $c_{kn}=a_k \sqrt{n}$ and $c_l=0$ for $l\not \in \{kn:~k\geq 0\}$.
   \qed
\end{rem}

\noindent
Next,
for any $A>1,$ $D\in \mathbb{N},$ we consider the functional space
\begin{equation*}
    S(A,D):=\left\{x\to  \frac{1}{\sqrt{A}}\sum_{k=0}^D c_k\exp(i\pi kx/A): \quad \sum_{k=1}^D|c_k| \leq \sqrt{A}K_\vphi\right\}.
\end{equation*}
The main result of this subsection is the following theorem.

\begin{theo} \label{approxerr}
Set with arbitrary small $\delta>0,$
\[
A_N:=\sqrt{2(\zeta^2+T)\log(N)}, \qquad D_N:=\sqrt{(2+\delta)\frac{\zeta^2+T}{\zeta^2}}\log(N)
\] and define $n_{N}:=\min\{n: \alpha_n\geq A_N\},$ $N\in \mathbb{N}.$
Then under Assumptions \ref{assmu0} and \ref{assphi}, we obtain 
\begin{eqnarray*}
\inf_{g\in S_N=S(\alpha_{n_N},D_N)} \|g-\vp\|_{\star}^2\lesssim \frac{\exp(K_\vphi^2 T)}{N}.
\end{eqnarray*}
As a consequence, we deduce that 
\begin{eqnarray*}
\Bigl\{\E\bigl[\|\vp_N-\vp\|^2_{\star}\bigr]\Bigr\}^{1/2}\lesssim  \sqrt{\frac{\exp(K_\vphi^2 T)}{N}}+K_\vphi\,\sqrt{\frac{\log(N)}{N}}.
\end{eqnarray*}
Here $\lesssim$ stands for inequality up to an absolute constant. 
\end{theo}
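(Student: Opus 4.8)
\textbf{Proof proposal for Theorem~\ref{approxerr}.}
The plan is to control the infimum $\inf_{g \in S(\alpha_{n_N}, D_N)} \|g - \vp\|_\star^2$ directly by choosing an explicit competitor $g$ and exploiting the exponential tail decay of the Gaussian-type densities $\mu_t$, after which the second assertion follows immediately by plugging the bound into Corollary~\ref{cor: vc-class}. First I would fix the competitor: using Assumption~\ref{assphi} at scale $\alpha_{n_N}$, the function $(\mu_t \star \vp)(x)$ equals an absolutely convergent trigonometric series on $[-\alpha_{n_N}, \alpha_{n_N}]$; I take $g$ to be the function in $S(\alpha_{n_N}, D_N)$ whose coefficients are the first $D_N+1$ coefficients $c_k = c_k(\alpha_{n_N})$ of that series (the coefficient bound $\sum_{k=1}^{D_N} |c_k| \le \sqrt{\alpha_{n_N}} K_\vphi$ is inherited from Assumption~\ref{assphi}, so $g$ is indeed admissible). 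Then $(\mu_t \star g)(x) = \frac{1}{\sqrt{\alpha_{n_N}}} \sum_{k=0}^{D_N} c_k e^{i\pi kx/\alpha_{n_N}} \widehat{\mu}_t(-k/\alpha_{n_N})$, so on the window $[-\alpha_{n_N}, \alpha_{n_N}]$ the difference $(\mu_t \star g)(x) - (\mu_t \star \vp)(x)$ is exactly the tail $-\frac{1}{\sqrt{\alpha_{n_N}}} \sum_{k > D_N} c_k e^{i\pi kx/\alpha_{n_N}} \widehat{\mu}_t(-k/\alpha_{n_N})$ of the series.

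Next I would split $\|g - \vp\|_\star^2 = \frac1T\int_0^T \int_\R |(\mu_t\star g)(x) - (\mu_t\star\vp)(x)|^2 \mu_t(x)\,dx\,dt$ into the contribution of $x \in [-\alpha_{n_N},\alpha_{n_N}]$ and its complement. On the complement, $(\mu_t\star g)$ and $(\mu_t\star\vp)$ are both bounded by $K_\vphi$ (since $\|g\|_\infty, \|\vp\|_\infty \le K_\vphi$ and $\mu_t$ is a probability density), so that piece is at most $4K_\vphi^2 \cdot \frac1T\int_0^T \mu_t([-\alpha_{n_N},\alpha_{n_N}]^c)\,dt$; here I need a sub-Gaussian tail bound $\mu_t([-R,R]^c) \lesssim \exp(-cR^2/(\zeta^2+t))$ for the McKean--Vlasov density, which follows from the Lipschitz/boundedness Assumption~\ref{assLip} together with the Gaussian initial law (Assumption~\ref{assmu0}): writing $X_t = X_0 + \int_0^t (\vp\star\mu_s)(X_s)\,ds + \sigma B_t$, the drift is bounded by $K_\vphi$, so $X_t$ is sub-Gaussian with variance proxy of order $\zeta^2 + \sigma^2 t$ plus a shift of size $K_\vphi t$, giving the factor $\exp(K_\vphi^2 T)$ in front. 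With $\alpha_{n_N} \ge A_N = \sqrt{2(\zeta^2+T)\log N}$ this term is $\lesssim \exp(K_\vphi^2 T)/N$. On the window $[-\alpha_{n_N},\alpha_{n_N}]$ I bound the integrand pointwise by the tail series: $|(\mu_t\star g)(x)-(\mu_t\star\vp)(x)| \le \frac{1}{\sqrt{\alpha_{n_N}}}\sum_{k>D_N} |c_k| \, |\widehat{\mu}_t(-k/\alpha_{n_N})|$, and $|\widehat{\mu}_t(z)| \le \exp(-\tfrac12 \zeta^2 z^2)$-type decay of the characteristic function (again from Assumption~\ref{assmu0} propagated forward, the variance of $\mu_t$ being at least $\zeta^2$) makes $|\widehat\mu_t(-k/\alpha_{n_N})| \le \exp(-c\zeta^2 k^2/\alpha_{n_N}^2)$; summing the geometric-type tail from $k = D_N$ onward and using $\sum_k|c_k| \lesssim K_\vphi\sqrt{\alpha_{n_N}}$ gives a bound of order $K_\vphi^2 \exp(-c\zeta^2 D_N^2/\alpha_{n_N}^2)$, and the choice $D_N^2/A_N^2 \asymp \frac{(2+\delta)(\zeta^2+T)/\zeta^2}{2(\zeta^2+T)}\log N = \frac{2+\delta}{2\zeta^2}\log N$ forces this to be $\lesssim N^{-1-\delta/2} \le 1/N$. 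Adding the two pieces yields the claimed $\inf_{g\in S_N}\|g-\vp\|_\star^2 \lesssim \exp(K_\vphi^2 T)/N$.

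Finally, to obtain the last display I combine this with Corollary~\ref{cor: vc-class}: the space $S(A,D)$ is, after the constraint $\sum|c_k|\le \sqrt A K_\vphi$, contained in a scaled $\ell^1$-ball in $\R^{D+1}$, whose covering number in $\|\cdot\|_\infty$ (note the trigonometric basis functions are uniformly bounded, so $\|\cdot\|_\infty$ on the function side is comparable to an $\ell^1$-type norm on coefficients) satisfies $\mathcal N(S(A,D),\|\cdot\|_\infty,\varepsilon) \lesssim \varepsilon^{-D}$ with $D = D_N \asymp \log N$, hence $D_N^2 N^{-1/2} = (\log N)^2 N^{-1/2} \to 0$ and Corollary~\ref{cor: vc-class} applies with $D_N \asymp \log N$, giving the stochastic term $K_\vphi\sqrt{D_N/N} \asymp K_\vphi\sqrt{\log N/N}$; together with $\|\vp_\star - \vp\|_\star \le (\inf_{g\in S_N}\|g-\vp\|_\star^2)^{1/2} \lesssim \sqrt{\exp(K_\vphi^2 T)/N}$ this is exactly the stated bound.

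\textbf{Main obstacle.} The delicate point is making the pointwise estimate on the window $[-\alpha_{n_N},\alpha_{n_N}]$ rigorous: one must verify that the tail of the trigonometric series genuinely decays like $\exp(-c\zeta^2 D_N^2/\alpha_{n_N}^2)$, which hinges on a clean quantitative lower bound $c\zeta^2$ on the rate of Gaussian decay of $|\widehat\mu_t(z)|$ that is uniform in $t\in[0,T]$ — a priori the nonlinear drift could distort $\mu_t$ away from a pure Gaussian, and one needs to argue (e.g.\ via the representation $X_t = X_0 + (\text{bounded drift}) + \sigma B_t$ and independence, or via a Gaussian-domination/log-Sobolev argument for the granular media equation~\eqref{eq:pde}) that the characteristic function still carries at least the $\exp(-\tfrac12\zeta^2 z^2)$ factor coming from $\mu_0$. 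The precise matching of the constants in $A_N$ and $D_N$ with the exponents $c$ appearing in these two Gaussian bounds — chosen exactly so that both the truncation error and the tail-of-density error come out at rate $1/N$ — is where the bookkeeping has to be done carefully, and the small parameter $\delta>0$ is precisely the slack absorbing the non-sharp constants.
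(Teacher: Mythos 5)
Your proposal follows essentially the same route as the paper's proof: same truncated-Fourier competitor, same split of $\|g-\vp\|_\star^2$ into the window $[-\alpha_{n_N},\alpha_{n_N}]$ and its complement, the same Gaussian density upper bound (with the $\exp(K_\vphi^2 T)$ prefactor from the bounded drift) for the tail term, and the same characteristic-function bound $|\widehat{\mu}_t(u)|\leq\exp(-u^2\zeta^2/2)$ obtained from the representation $X_t=X_0+\int_0^t(\vp\star\mu_s)(X_s)\,ds+\sigma B_t$ — the "delicate point" you flag is handled in the paper exactly this way, and the second assertion is indeed obtained by feeding the approximation bound and the $\ell^1$-ball covering number into Corollary~\ref{cor: vc-class}. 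The only cosmetic difference is that on the window you bound the tail series pointwise via the triangle inequality rather than via the orthogonality/Parseval step the paper uses, which yields the same rate since $\sum_{k>D_N}|c_k|^2\leq(\sum_{k>D_N}|c_k|)^2$.
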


\noindent
As a consequence of Theorem \ref{approxerr}, the resulting convergence rate is established as $\sqrt{\log(N)/N}$. We will demonstrate in Section \ref{sec.low} that this rate is essentially optimal. However, it is important to note, as highlighted in Remark \ref{rem2}, that this does not necessarily imply a polynomial convergence rate with respect to the classical $\|\cdot\|_{L^2(\R)}$-norm. In fact, we will establish in Section \ref{sec.low} that the minimax lower bound for the $\|\cdot\|_{L^2(\R)}$-norm is logarithmic, consistent with the findings of \cite{BPP22}.

\begin{rem} Theorem \ref{approxerr} suggests that the approximation error decays exponentially when dealing with functions of the form described in \cref{Fourie}. However, this phenomenon is not exclusive to the Fourier basis. Remarkably, a similar result can be observed for the polynomial basis. The forthcoming explanation sheds light on the fast rates of approximation in the $\|\cdot \|_{\star}$-norm.
Suppose that $\varphi\in L^{1}(\mathbb{R})$. Then we have by the Parseval identity 
\[
(\varphi\star\mu_{t})(x+iy)=\frac{1}{2\pi}\int_{\mathbb{R}} \exp(iux-uy)\widehat{\varphi}(u)\widehat{\mu}_{t}(-u)\,du.
\]
Under Assumption \ref{assmu0}, $|\widehat{\mu}_{t}(u)|\leq\exp(-u^{2}\zeta^{2}/2)$ (see \cref{muhatbounded})
and hence
\begin{align*}
|(\varphi\star\mu_{t})(x+iy)| & \leq\frac{\|\varphi\|_{L^{1}}}{2\pi}\int \exp(-uy)|\widehat{\mu}_{t}(u)|\,du\\
 & \leq\sqrt{\frac{1}{2\pi\zeta^{2}}}\|\varphi\|_{L^{1}}\exp\left(y^{2}/(2\zeta^{2})\right).
\end{align*}
Thus 
\[
\max_{|z|=r}|(\varphi\star\mu_{t})(z)|\leq\sqrt{\frac{1}{2\pi\zeta^{2}}}\|\varphi\|_{L^{1}}\exp\left(r^{2}/(2\zeta^{2})\right), \quad r>0.
\]
As a result, the function $\varphi\star\mu_{t}$ is entire of order $2$ and finite type. Furthermore,  we deduce the power series expansion
\[
(\varphi\star\mu_{t})(z)=\sum_{n=0}^{\infty}c_{n}z^{n}
\]
with $c_{n}\leq\left(e/\zeta^{2}n\right)^{n/2}.$ This implies the inequality
\begin{align*}
\sup_{x\in[-A,A]}\left|(\varphi\star\mu_{t})(x)-\sum_{n=0}^{D}c_{n}x^{n}\right| & \leq\varepsilon_{D,A}=\sum_{n=D+1}^{\infty}\left(\frac{eA^{2}}{\zeta^{2}n}\right)^{n/2}
\end{align*}
Note that 
\[
\varepsilon_{D,A}\leq2e^{-(1+D)/2}
\]
provided $(1+D)^{-1}A^{2}\leq\zeta^{2}/e^{2}.$ Let $T_{K}[f]=1_{\{|f(x)|\leq K\}}f(x)+K\mathrm{sign}(f(x))$
be a truncation operator at level $K$. Then it holds that 
\[
\inf_{\psi\in\mathrm{span}\{z^{n},n\leq D\}}\frac{1}{T}\int_{0}^{T}\int\left((\mu_{t}\star\varphi)(x)-T_{K_{\varphi}}[\psi\star\mu_{t}](x)\right)^{2}\mu_{t}(x)\,dx\,dt\leq\varepsilon_{D,A}^{2}+R_{T}(A)
\]
 with 
\[
R_{T}(A)=\frac{4K_{\varphi}^{2}}{T}\int_{0}^{T}\int_{|x|>A}\mu_{t}(x)\,dx\,dt.
\]
Here we used the fact that for any $\psi\in\mathrm{span}\{z^{n}:~n\leq D\}$ it holds $\psi\star\mu_{t}\in\mathrm{span}\{z^{n}:~n\leq D\}.$
Similarly to the case of Fourier basis (see proof of Theorem~\ref{approxerr}), we get 
\[
R_{T}(A)\le8K_{\varphi}^{2}\exp\left(K_{\varphi}^{2}T/2\right)\exp\left(-\frac{A^{2}}{2(\zeta^{2}+T)}\right).
\]
 If we chose $A_{N}=\sqrt{2(\zeta^{2}+T)\log(N)}$ and $D_{N}=\frac{2e^{2}(\zeta^{2}+T)}{\zeta^{2}}\log(N)$,
then 
\[
\inf_{\varphi\in\mathrm{span}\{z^{n}:~n\leq D_N\}}\frac{1}{T}\int_{0}^{T}\int\left((\mu_{t}\star\varphi)(x)-T_{K_{\varphi}}[\psi\star\mu_{t}](x)\right)^{2}\mu_{t}(x)\,dx\,dt\lesssim\frac{\exp\left(K_{\varphi}^{2}T/2\right)}{N}.
\]
In other words, we obtain an exponential decay of the approximation error for the polynomial basis as well. \qed
\end{rem}

\section{Risk minimization over vector spaces} \label{sec2ab}

\noindent
While the empirical minimization approach presented in the preceding section serves as a valuable estimation method, it is not without its limitations. Firstly, the construction of the estimator $\vphi_N$ in \eqref{defphiN} inherently assumes knowledge of the constant $K_{\vphi}$. Secondly, a significant drawback lies in the necessity of the compactness property of functional spaces $S_m$ with respect to the sup-norm, a condition that is notably stringent.

\subsection{Setting and construction of the estimator} 
This section focuses on a more traditional setting of risk minimization within vector spaces. We assume that the finite-dimensional spaces
$S_N$ are equipped with an inner product $\langle \cdot, \cdot \rangle$,
and $(e_1,\ldots,e_{D_N})$ represents an orthonormal basis of $S_N$ satisfying $\|e_j\|_{\infty}<\infty$ for all $j=1,\ldots, D_N$.
\par
Due to the vector space structure, the estimator $\vphi_N$ introduced at \eqref{defphiN} can be computed explicitly as
\begin{equation}
\label{LSE}
\vphi_{N}=\sum_{j=1}^{D_N} \left(\theta_{N}\right)_j e_j,
\end{equation}
where $\theta_{N}=\Psi_{N}^{-1} Z_{N}$ and the random vectors $\Psi_{N},\ Z_{N}$, taking values in $\R^{D_N \times D_N}$ and $\R^{D_N}$ respectively, are given by 
\begin{align}
(\Psi_{N})_{jk} &= \lan e_j,e_k \ran_N, \nonumber \\[1.5 ex]
\label{defsol}
(Z_{N})_j&=   \frac{1}{NT} \sum_{i=1}^{N} \int_0^T e_j \star  \mu_t^N (X_t^{i,N})
 \, dX_t^{i,N}.
\end{align}
There is a theoretical counterpart to  the empirical matrix $\Psi_{N}$. Indeed, we have that 
$\E{\Psi_{N}}\approx \Psi$ where the matrix $\Psi$ is given by the formula
\begin{equation} \label{Psidef}
(\Psi)_{jk} = \lan e_j,e_k \ran_{\star}.
\end{equation}
Obviously, both matrices are positive semidefinite by construction.
In the next step, we will regularise the  estimator $\vphi_{N}$. For a given sequence  $D_N$, the operator norm $\|\Psi_{N}^{-1}\|_{\text{op}}$ may become too large compared to the sample size $N$ and we would like to avoid such situations. For this purpose, we  restrict   the growth of $\|\Psi^{-1}\|_{\text{op}}.$ 
\begin{assumption}
	\label{assm}
	Let $\eta\geq 5$ be a given number. We assume that $\Psi_{N}$ is invertible almost surely and the sequence $D_N$ is such that the following growth condition is satisfied:
	\[
	L_N^2 \|\Psi^{-1}\|_{\text{op}}^2 \leq \frac{c_{\eta,T} NT}{4 \log(NT)},
	\] 
	where $L_N := \sum_{j=1}^{D_N} \|e_j\|^2_{\infty}$ and $c_{\eta,T}: = (72\eta T)^{-1}$.
\end{assumption}
\par
We now consider the regularised version of the initial estimator $\vphi_{N}$  introduced at \cref{defphiN}:
\begin{equation} \label{finalest}
\wphi_{N} = \vphi_{N} 1_{ \left\{L_N^2 \|\Psi_{N}^{-1}\|_{\text{op}}^2 \leq \frac{c_{\eta,T} NT}{\log(NT)} \right\}}
\end{equation} 
An analogous methodology was suggested in \cite{CG20b} within the framework of classical stochastic differential equations. Nevertheless, the probabilistic analysis of $\wphi_{N}$ becomes notably more intricate due to the sophisticated structure of the model. Moreover, unlike the analysis in \cite{CG20b}, the theoretical quantity $\|\Psi^{-1}\|_{\text{op}}^2$ in Assumption \ref{assm} proves challenging to control due to the absence of information about $\mu_t$. We will discuss this condition in details in Section \ref{secAssu4}.

\subsection{Main results} \label{sec3}

\noindent 
This section is devoted to the asymptotic analysis of the estimator $\wphi_{N}$. 
We start by introducing two random sets, which will be crucial for our proofs. We define
\begin{equation} \label{setdef}
\Lambda_N:= \left\{ L_N^2 \|\Psi_{N}^{-1}\|_{\text{op}}^2 \leq \frac{c_{\eta,T} NT}{\log(NT)} \right\},
\qquad  \Omega_N:=  \left\{ \left| \frac{\|f\|_N^2}{\|f\|_{\star}^2} -1     \right|\leq \frac 12 ~\forall f\in S_N \right\}.
\end{equation}
We recall that $\Lambda_N$ reflects the cut-off introduced in \cref{finalest}. On the other hand, on 
$\Omega_N$ the norms $\|\cdot \|_N$ and $\|\cdot\|_{\star}$ are equivalent, i.e.
\[
\frac{1}{2} \|f\|_{\star}^2 \leq \|f\|_N^2 1_{\Omega_N} \leq \frac{3}{2} \|f\|_{\star}^2. 
\]
Our first theoretical statement shows that, under \cref{assm}, $\P(\Lambda_N)$ and $\P(\Omega_N)$ approach $1$ as $N\to \infty$.  

\begin{prop} \label{prop1}
Suppose that Assumptions \ref{assLip} and \ref{assm} are satisfied. 
\begin{itemize} 
\item[(i)] There exists $k,n \in \N$ such that for all $N\geq n$:
\begin{equation*}
\P \left( \left\| \Psi^{-1/2} \Psi_{N}\Psi^{-1/2} - I_{D_N} \right\|_{\text{\rm op}} >\frac 12  \right)
\leq D_N^{k/4} \exp \left( - \frac{kc_{\eta,T} \eta NT}{16L_N^2 \|\Psi^{-1}\|_{\text{\rm op}}^2} \right),
\end{equation*}
where $I_{D_N}$ denotes the identity matrix in $\R^{D_N \times D_N}$.
\item[(ii)] It holds that 
\[
\P(\Lambda_N^c) \leq \P(\Omega_N^c) \leq (NT)^{-\eta +1}.
\]
\end{itemize}
\end{prop}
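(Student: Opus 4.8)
\emph{Reduction.} First I would observe that $\Omega_N\subseteq\Lambda_N$, so that $\P(\Lambda_N^c)\le\P(\Omega_N^c)$: on $\Omega_N$ one has $\tfrac12\|f\|_\star^2\le\|f\|_N^2$ for every $f\in S_N$, and writing $f=\sum_j b_je_j$ this reads $\tfrac12\Psi\preccurlyeq\Psi_N$ as quadratic forms in $b$, whence $\Psi_N$ is invertible with $\|\Psi_N^{-1}\|_{\opr}\le 2\|\Psi^{-1}\|_{\opr}$; Assumption~\ref{assm} then gives $L_N^2\|\Psi_N^{-1}\|_{\opr}^2\le 4L_N^2\|\Psi^{-1}\|_{\opr}^2\le c_{\eta,T}NT/\log(NT)$, i.e. $\Lambda_N$ occurs. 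The same substitution shows that $\Omega_N^c$ is exactly the event $\{\|\Psi^{-1/2}\Psi_N\Psi^{-1/2}-I_{D_N}\|_{\opr}>\tfrac12\}$ appearing in part~(i). Hence it suffices to prove the concentration bound (i) and then insert the bound of Assumption~\ref{assm}.

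\emph{Proof of (i): passage to the i.i.d.\ model and matrix concentration.} The observed trajectories are not independent because of the coupling through $\mu_t^N$, so, exactly as in the proof of Theorem~\ref{Th2.3}, I would use the change-of-measure device of \cite{DM22a}: letting $\tilde\P$ be the law under which the $N$ paths are i.i.d.\ solutions of \eqref{eq:McKeanEq}, boundedness and Lipschitz continuity of $\vphi$ (Assumption~\ref{assLip}) together with propagation of chaos keep the relevant moments of $d\P/d\tilde\P$ under control, so that $\P(A)\lesssim\tilde\P(A)^{1-1/p}$ for any fixed $p>1$ and any event $A$. It then remains to bound $\tilde\P(A)$ with $A=\{\|\Psi^{-1/2}\Psi_N\Psi^{-1/2}-I_{D_N}\|_{\opr}>\tfrac12\}$. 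I would split $\Psi_N=\Psiiid+(\Psi_N-\Psiiid)$, where $\Psiiid$ is obtained by replacing $\mu_t^N$ by $\mu_t$ in the convolutions. Then $\Psiiid=\tfrac1N\sum_{i=1}^N Q_i$ is an average of i.i.d.\ positive semidefinite matrices with $\E_{\tilde\P}[Q_i]=\Psi$ and, by Young's convolution inequality, $\|Q_i\|_{\opr}=\tfrac1T\int_0^T\sum_j(e_j\star\mu_t)^2(X_t^i)\,dt\le\sum_j\|e_j\|_\infty^2=L_N$; applying the matrix Chernoff inequality to $\tfrac1N\sum_i\Psi^{-1/2}Q_i\Psi^{-1/2}$ (i.i.d., mean $I_{D_N}$, summands of operator norm $\le L_N\|\Psi^{-1}\|_{\opr}$) yields $\tilde\P(\|\Psi^{-1/2}\Psiiid\Psi^{-1/2}-I_{D_N}\|_{\opr}>\tfrac14)\le 2D_N\exp\bigl(-cN/(L_N\|\Psi^{-1}\|_{\opr})\bigr)$. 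For the residual term, for $f=\sum_j b_je_j$ with $\|f\|_\star=1$ I would bound $|b^{\top}(\Psi_N-\Psiiid)b|$ by $2\|f\|_\infty\,\sup_{x,t}|f\star(\mu_t^N-\mu_t)(x)|$, use $\|f\|_\infty\le(L_N\|\Psi^{-1}\|_{\opr})^{1/2}$ on the unit $\|\cdot\|_\star$-sphere of $S_N$, and control the empirical-measure fluctuation $\sup_{x,t}|f\star(\mu_t^N-\mu_t)(x)|$ uniformly in $f$ by a Bernstein estimate combined with an $\ep$-net of $S_N$ in $\|\cdot\|_\infty$ (of log-cardinality $\lesssim D_N\log(1/\ep)$) and a net in the $(x,t)$-variables; this gives a bound of the same exponential type with an extra power of $D_N$. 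Combining the two pieces and undoing the change of measure produces a bound of the form $D_N^{k/4}\exp\bigl(-c\,N/(L_N^2\|\Psi^{-1}\|_{\opr}^2)\bigr)$ with the exponent written as in~(i).

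\emph{Proof of (ii).} By the reduction, $\P(\Lambda_N^c)\le\P(\Omega_N^c)$ and $\P(\Omega_N^c)$ equals the left-hand side of~(i). Substituting $L_N^2\|\Psi^{-1}\|_{\opr}^2\le c_{\eta,T}NT/(4\log(NT))$ from Assumption~\ref{assm} and using $c_{\eta,T}\eta T=1/72$, the exponential factor in~(i) is at most $(NT)^{-k\eta/4}$. Moreover $\operatorname{tr}(\Psi)=\sum_j\|e_j\|_\star^2\le\sum_j\|e_j\|_\infty^2=L_N$ and $\lambda_{\min}(\Psi)=\|\Psi^{-1}\|_{\opr}^{-1}$ give $D_N\le L_N\|\Psi^{-1}\|_{\opr}\le\sqrt{NT}$ for all $N$ large, hence $D_N^{k/4}\le(NT)^{k/8}$. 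Therefore $\P(\Omega_N^c)\le(NT)^{-k(2\eta-1)/8}$, and since $\eta\ge 5$ already $k=4$ yields $\P(\Omega_N^c)\le(NT)^{-\eta+1/2}\le(NT)^{-\eta+1}$; consequently $\P(\Lambda_N^c)\le(NT)^{-\eta+1}$ as well.

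\emph{Main obstacle.} I expect the genuinely hard step to be the matrix-concentration estimate under $\tilde\P$: one must decouple the mean-field interaction while keeping the moments of the Radon--Nikodym derivative bounded, and simultaneously control the empirical-measure error $\mu_t^N-\mu_t$ uniformly over the growing class $S_N$. This is precisely where Assumptions~\ref{assLip} and~\ref{assm} are used, and it makes the argument considerably more delicate than the corresponding analysis for classical SDEs in \cite{CG20b}.
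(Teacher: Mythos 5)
Your reduction in part (ii) is correct and in fact cleaner than the paper's: the implication $\Omega_N\subseteq\Lambda_N$ via $\Psi_N\succcurlyeq\tfrac12\Psi$ and Assumption~\ref{assm}, and the identification of $\Omega_N^c$ with the operator-norm event in (i), are exactly right (the paper instead argues on $\Lambda_N^c$ and invokes \cite[Proposition 4(ii)]{CG20a}). The change-of-measure step and the final arithmetic for (ii) also match the paper's logic, up to the caveat that the exponent $k$ in \cite[Theorem 18]{DM22a} is a fixed constant handed to you by that theorem, not a parameter you may set equal to $4$.

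The genuine gap is in your treatment of the residual $\Psi_N-\Psiiid$ in part (i). Your decomposition is different from the paper's: you replace $\mu_t^N$ by $\mu_t$ to get a sum of i.i.d.\ PSD matrices (matrix Chernoff, fine, polynomial prefactor $D_N$), and then you must control $\sup_{\|f\|_\star=1}|b^\top(\Psi_N-\Psiiid)b|$ by scalar Bernstein bounds combined with an $\varepsilon$-net of the unit $\|\cdot\|_\star$-sphere of $S_N$. That net has log-cardinality of order $D_N\log(1/\varepsilon)$, so the union bound produces a prefactor $e^{cD_N}$, not the polynomial $D_N^{k/4}$ claimed in the statement. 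This matters: under Assumption~\ref{assm} the exponent $N/(L_N^2\|\Psi^{-1}\|_\opr^2)$ need only be of order $\log(NT)$, while $D_N$ may grow as fast as $L_N\|\Psi^{-1}\|_\opr\lesssim\sqrt{NT/\log(NT)}$, so the factor $e^{cD_N}$ can overwhelm the exponential decay entirely whenever $D_N\gg\log(NT)$. The paper sidesteps this by never separating off the empirical measure: it applies the matrix bounded-differences (Efron--Stein) inequality of \cite[Corollary 6.1]{PMT14} directly to the whole matrix $H(\BX^{1,N},\dots,\BX^{N,N})=\Psi^{-1/2}\Psiiid\Psi^{-1/2}$ viewed as a function of the $N$ independent paths, with $\mu_t^N$ kept inside $H$; changing one path perturbs each entry by at most $\tfrac{6}{N}\|\bar e_j\|_\infty\|\bar e_k\|_\infty$, which yields $\nu^2\asymp L_N^2\|\Psi^{-1}\|_\opr^2/N$ and a dimensional prefactor of only $D_N$. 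The remaining discrepancy $\E[\Psiiid]-\Psi$ is then shown to be $O(1/N)$ deterministically by an exchangeability computation, rather than by a uniform empirical-process bound. To repair your argument you would either have to restrict to $D_N\lesssim\log(NT)$ (which the proposition does not assume) or apply a matrix concentration inequality of bounded-differences type to the residual as well --- at which point you have essentially reconstructed the paper's proof.
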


\noindent
The concentration bound in \cref{prop1}(i) is key in understanding the asymptotic behaviour of the estimator $ \wphi_{N}$. 
The main result of this section is the following theorem.

\begin{theo} \label{th1}
Suppose that Assumptions \ref{assLip} and \ref{assm} are satisfied. Then we obtain uniform bounds 
\begin{equation} \label{main1}
\sup_{\vphi}
\E\left[\|\wphi_{N} -\vphi \|_N^2\right] \leq  \sup_{\vphi }
\E\left[\inf_{f\in S_N } \|f-\vphi \|_{N}^2 \right]
+ \frac{CD_N }{NT}
\end{equation}
as well as
\begin{align} 
\sup_{\vphi}
\E\left[\|\wphi_{N} -\vphi \|_{\star}^2\right] &\leq \left( 1+ o(1)\right)
\sup_{\vphi} \inf_{f\in S_N } \|f-\vphi \|_{\star}^2 \nonumber \\[1.5 ex]
&+CN^{-1/2}(1+L_N)\sup_{\vphi} \|\vphi_{\star} - \vphi\|_{\star \star}^2 + \frac{CD_N}{NT}, \label{main2}
\end{align}
where  the supremum is taken over the class of functions $\Phi$ defined as
\[
\Phi:= \left\{\vphi:\R \to\R:~\|\vphi\|_{\infty}\leq K_1,~K_2\leq \|\vphi\|_{\text{\rm Lip}}\leq K_{3} \right\}
\]
for fixed constants $K_1,K_2,K_3>0$. Here the norm $\|f\|_{\star \star}$ is defined as
\[
\|f\|_{\star \star}^2:=\frac{1}{T} \int_0^T \int_{\R} f^2 \star  \mu_t (x)
  \mu_t(x)\, dx\, dt.
\]
\end{theo}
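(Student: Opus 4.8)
The plan is to establish \eqref{main1} and \eqref{main2} by a decomposition of the error on the event $\Lambda_N$ (where the estimator is nonzero) and its complement, using \cref{prop1} to absorb the contribution from $\Lambda_N^c$. On $\Lambda_N$ we have $\wphi_N = \vphi_N = \sum_j (\Psi_N^{-1} Z_N)_j e_j$, and using the martingale decomposition $dX_t^{i,N} = (\vphi\star\mu_t^N)(X_t^{i,N})\,dt + \sigma\,dW_t^i$ one writes $Z_N = \Psi_N \pi_N \vphi + \xi_N$, where $\pi_N\vphi$ is the $\langle\cdot,\cdot\rangle_N$-projection of $\vphi$ onto $S_N$ (more precisely one has to handle the fact that $\vphi\star\mu_t^N$ need not be $f\star\mu_t^N$ for $f\in S_N$, so the relevant "bias" term $\inf_{f\in S_N}\|f-\vphi\|_N^2$ enters here) and $\xi_N$ is a martingale-type vector with entries $(NT)^{-1}\sigma\sum_i\int_0^T e_j\star\mu_t^N(X_t^{i,N})\,dW_t^i$. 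This gives, on $\Lambda_N$, the Pythagorean-type identity
\[
\|\wphi_N - \vphi\|_N^2 = \inf_{f\in S_N}\|f-\vphi\|_N^2 + \|\Psi_N^{-1/2}\xi_N\|_{\R^{D_N}}^2
\]
up to cross terms that vanish by orthogonality of the projection. The first bound \eqref{main1} then follows once we show $\E[\|\Psi_N^{-1/2}\xi_N\|^2 1_{\Lambda_N}] \lesssim D_N/(NT)$ and that the contribution of $\Lambda_N^c$, namely $\E[\|\vphi\|_N^2 1_{\Lambda_N^c}]$ together with $\E[\inf_f\|f-\vphi\|_N^2 1_{\Lambda_N^c}]$, is negligible by Cauchy--Schwarz and the polynomial decay $\P(\Lambda_N^c)\le (NT)^{-\eta+1}$ from \cref{prop1}(ii).

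For the variance term: conditionally on the paths, $\xi_N$ has covariance $\E[\xi_N\xi_N^\top \mid \mathcal F] = \frac{\sigma^2}{N T^2}\cdot(\text{a matrix close to }\Psi_N)$ by the Itô isometry and the independence of the Brownian motions; more carefully $\E[(\xi_N)_j(\xi_N)_k\mid\mathcal F] = \frac{\sigma^2}{N(T)^2}\cdot\frac{1}{N}\sum_i\int_0^T (e_j\star\mu_t^N)(X_t^{i,N})(e_k\star\mu_t^N)(X_t^{i,N})\,dt = \frac{\sigma^2}{NT}(\Psi_N)_{jk}$. Hence $\E[\|\Psi_N^{-1/2}\xi_N\|^2\mid\mathcal F] = \frac{\sigma^2}{NT}\,\mathrm{tr}(\Psi_N^{-1}\Psi_N) = \frac{\sigma^2 D_N}{NT}$, which is exactly the stated order; restricting to $\Lambda_N$ only helps. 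This step is clean because the trace telescopes regardless of the conditioning. The passage from \eqref{main1} to the $\|\cdot\|_\star$-bound \eqref{main2} uses the event $\Omega_N$ on which $\frac12\|f\|_\star^2 \le \|f\|_N^2\le \frac32\|f\|_\star^2$ for all $f\in S_N$; on $\Omega_N\cap\Lambda_N$ one converts $\|\wphi_N-\vphi\|_N^2$ into $\|\wphi_N-\vphi\|_\star^2$ up to the projection mismatch. The subtlety is that $\inf_{f\in S_N}\|f-\vphi\|_N^2$ and $\inf_{f\in S_N}\|f-\vphi\|_\star^2$ differ, and controlling their difference uniformly over $\Phi$ is where the term $CN^{-1/2}(1+L_N)\sup_\vphi\|\vphi_\star-\vphi\|_{\star\star}^2$ appears: one writes $\|f-\vphi\|_N^2 - \|f-\vphi\|_\star^2$ as a centered empirical-process fluctuation plus a bias coming from $\mu_t^N\to\mu_t$, and bounds its expectation by a concentration/U-statistic argument (as in the proof of \cref{Th2.3}) whose magnitude is governed by $L_N = \sum_j\|e_j\|_\infty^2$ and the rate at which $\mu_t^N$ approximates $\mu_t$, yielding the $N^{-1/2}(1+L_N)$ prefactor; the $\|\cdot\|_{\star\star}$-norm enters because the relevant second-moment term is $\E[(f\star\mu_t^N)^2(X_t^{i,N})]$-type and $f^2\star\mu_t$ rather than $(f\star\mu_t)^2$ controls it. The $(1+o(1))$ factor on $\sup_\vphi\inf_f\|f-\vphi\|_\star^2$ comes from the multiplicative norm equivalence on $\Omega_N$ combined with $\P(\Omega_N)\to1$.

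The main obstacle, as in \cref{Th2.3}, is transferring everything from i.i.d.\ McKean--Vlasov observations to the genuine particle system and handling the discrepancy $\mu_t^N$ versus $\mu_t$ inside all the bilinear forms. Concretely, the matrix $\Psi_N$ is built from $e_j\star\mu_t^N$, not $e_j\star\mu_t$, so even the "conditional Itô isometry" step above lands on $\Psi_N$, and one must then invoke \cref{prop1}(i) to argue $\Psi_N\approx\Psi$ in operator norm with overwhelming probability in order to trade $\Psi_N^{-1}$ for $\Psi^{-1}$ wherever a deterministic bound is needed (e.g.\ to invoke Assumption~\ref{assm}). The change-of-measure device from \cite{DM22a}, already used for \cref{Th2.3}, should again be the tool for the i.i.d.\ reduction; the new difficulty relative to the compact-class setting is that the finite-dimensional least-squares structure forces us to control $\|\Psi_N^{-1}\|_{\mathrm{op}}$ pathwise, which is precisely what the cut-off $1_{\Lambda_N}$ and \cref{prop1} are designed to make tractable. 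I would therefore organize the proof as: (1) reduce to i.i.d.\ observations; (2) establish the conditional Itô isometry and the trace identity for the variance term; (3) prove the Pythagorean decomposition on $\Lambda_N$; (4) bound the $\Lambda_N^c$ contribution via \cref{prop1}(ii); (5) on $\Omega_N$ convert $\|\cdot\|_N$ to $\|\cdot\|_\star$ and absorb the projection mismatch via an empirical-process bound producing the $L_N$-dependent remainder; (6) collect terms.
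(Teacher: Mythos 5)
Your overall architecture coincides with the paper's: Pythagorean decomposition of $\|\wphi_N-\vphi\|_N^2$ into $\inf_{f\in S_N}\|f-\vphi\|_N^2$ plus a quadratic form in the martingale vector, splitting over $\Lambda_N\cap\Omega_N$, $\Lambda_N\cap\Omega_N^c$ and $\Lambda_N^c$ with \cref{prop1} absorbing the bad events, norm equivalence on $\Omega_N$ to pass to $\|\cdot\|_\star$, and a U-statistic/change-of-measure bound on the projection mismatch $\|\Pi^N(\vphi-\vphi_\star)\|_\star^2$ producing the $N^{-1/2}(1+L_N)\|\vphi_\star-\vphi\|_{\star\star}^2$ remainder. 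You also correctly locate why $\|\cdot\|_{\star\star}$ rather than $\|\cdot\|_\star$ appears.

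There is, however, a genuine flaw in your step (2), the ``conditional It\^o isometry'' and the claim that the trace telescopes ``regardless of the conditioning.'' The vector $\xi_N$ (the paper's $E_N$) is a stochastic integral of $e_j\star\mu_t^N(X_t^{i,N})$ against $dW_t^i$, and the Brownian motions $W^i$ are precisely what drive the paths $X^{i,N}$; hence $\xi_N$ is measurable with respect to the paths, and ``conditioning on the paths'' leaves no Gaussian randomness to which an It\^o isometry could apply. Unconditionally one cannot write $\E\bigl[E_N^\top\Psi_N^{-1}E_N\bigr]=\tfrac{\sigma^2}{NT}\operatorname{tr}\bigl(\Psi_N^{-1}\Psi_N\bigr)$ either, because $\Psi_N^{-1}$ is random and correlated with $E_N$, so $\E\bigl[E_N^\top\Psi_N^{-1}E_N\bigr]\neq\operatorname{tr}\bigl(\Psi_N^{-1}\E[E_NE_N^\top]\bigr)$. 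The paper's route around this is exactly what the events are for: on $\Omega_N$ all eigenvalues of $\Psi^{-1/2}\Psi_N\Psi^{-1/2}$ lie in $[\tfrac12,\tfrac32]$, whence $E_N^\top\Psi_N^{-1}E_N\,1_{\Omega_N}\le 2\,E_N^\top\Psi^{-1}E_N$ with the \emph{deterministic} matrix $\Psi^{-1}$, and only then does the unconditional It\^o isometry $\E[(E_N)_j(E_N)_k]=\tfrac{\sigma^2}{NT}\Psi_{jk}+O(N^{-2})$ give $\E\bigl[E_N^\top\Psi^{-1}E_N\bigr]\lesssim D_N/(NT)$ (the $O(N^{-2})$ correction being controlled via \cref{assm}). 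On $\Lambda_N\cap\Omega_N^c$ one instead uses Cauchy--Schwarz together with the fourth-moment bound $\E[\norm{E_N}^4]\lesssim D_NL_N^2/(NT)^2$ and $\P(\Omega_N^c)\le (NT)^{-\eta+1}$. With your argument as written, the central $D_N/(NT)$ variance bound is not justified; once this step is replaced by the $\Omega_N$-based deterministic substitution, the rest of your plan goes through as in the paper.
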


\noindent 
We remark that $\|f\|_{ \star} \leq \|f\|_{\star \star}$. However, it does not hold a priori that $\|\vphi_{\star} - \vphi\|_{\star \star}\to 0$ as $N\to \infty$. We will discuss this term in the next subsection.

\subsection{Checking Assumption \ref{assm} and bounding the norm
 $\|\vphi_{\star} - \vphi\|_{\star \star}$} \label{secAssu4}
The primary challenge in implementing the theoretical findings from the preceding section lies in selecting an appropriate dimension $D_N$, which ensures the fulfillment of Assumption 4. This requirement is essentially equivalent to determining an upper bound for the quantity $\|\Psi^{-1}\|_\opr$. In this context, we introduce an approach to establish an upper bound on the operator norm $\|\Psi^{-1}\|_\opr$, guided by a Gaussian-type condition on the densities $\mu_t$.
We introduce the following condition on $(\mu_t)_{t\in [0,T]}$:
\begin{assumption}
	\label{lowecond}
	We suppose that the densities $\mu_t$ are symmetric and there exist numbers $c_1,c_2>0$ and functions $g_2\in L^2([0,T])$, $g_1 g^2_2 \in L^1([0,T])$ such that 
 \[
 \mu_t(x)\geq g_1(t) \exp(-x^2/c_1) \qquad \text{and} \qquad
 \widehat{\mu}_t(x)\geq g_2(t) \exp(-x^2/c_2).
 \]
\end{assumption}
\begin{rem}
The lower bound for the densities $(\mu_t)_{t\in [0,T]}$ can be deduced via
\cite[Theorem~1]{QZ02}. Indeed, under Assumption \ref{assmu0}, it holds that
\begin{align*}
    \mu_t(x) & \ge \frac{1}{\sqrt{2\pi t}} \int_\R \mu_0(y) \int_{\frac{\abs{x-y}}{\sqrt t}}^\infty z \exp\left(-\frac{(z+K_\varphi \sqrt t)^2}{2}\right)\, dz \, dy\\
    & \ge \frac{\exp\left(-K_\varphi^2 t\right)}{2\pi\sqrt{t\zeta^2}} \int_\R \exp\left(-\frac{y^2}{2\zeta^2}\right) \exp\left(-\frac{(x-y)^2}{t}\right)\, dy\\
    & = \frac{\exp\left(-K_\varphi^2 t\right)}{\sqrt{2\pi (\zeta^2 + t/2)}} \exp\left(-\frac{x^2}{2\zeta^2 + t}\right).
\end{align*}
Obtaining lower bounds for the Fourier transforms $(\widehat{\mu}_t)_{t\in [0,T]}$ is a more delicate problem. Some partial results in this direction have been demonstrated in \cite[Theorem~5.4]{ABPPZ23} in the ergodic setting. 
\qed
\end{rem}
\noindent
We consider the vector space $S_N$ generated by functions 
$(e_k)_{1\leq k \leq D_N}$ with 
$$e_k(x)=(2A_N)^{-1/2}
\exp(i\pi kx/A_N), \qquad x\in \R,$$ 
which is an orthonormal system in $L^2([-A_N,A_N])$. For this choice of basis we obtain the identity 
\begin{align} \label{Psiidenti}
\Psi_{kl}= \langle e_k,e_l \rangle_{\star}= 
\frac{1}{2A_NT} \int_0^T \widehat{\mu}_t(-k/A_N) \widehat{\mu}_t(l/A_N) \widehat{\mu}_t((l-k)/A_N) dt.
\end{align}
We deduce the following result.

\begin{prop} \label{propPsi}
Suppose that Assumption \ref{lowecond} holds. Assume that $D_N=a_{D} \log(N)$ and $A_N=a_A \sqrt{\log(N)}$ for some $a_D,a_A>0$. Then it holds that 
\begin{align*}
\|\Psi^{-1}\|_\opr \leq v_T N^{\frac{2a_D^2}{c_2 a_A^2} + \frac{a_A^2}{c_1}} (1+o(1)),
\end{align*}
where $v_T:=2\sqrt{2\pi} T \left(\int_{0}^T
g_1(t) g_2(t)^2 dt \right)^{-1}$. 
\end{prop}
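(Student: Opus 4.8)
The plan is to bound $\|\Psi^{-1}\|_{\opr}$ by exhibiting an explicit quadratic form lower bound $\Psi \succcurlyeq \lambda_{\min} I_{D_N}$ and then controlling $\lambda_{\min}^{-1}$. Starting from the identity \eqref{Psiidenti}, I would first note that for any vector $c=(c_1,\dots,c_{D_N})\in\R^{D_N}$ (or $\mathbb{C}^{D_N}$) one has, with $f=\sum_k c_k e_k$,
\[
c^{\top}\Psi c = \|f\|_{\star}^2 = \frac{1}{T}\int_0^T \int_{\R} |(f\star\mu_t)(x)|^2 \mu_t(x)\,dx\,dt,
\]
so the task reduces to a lower bound on $\|f\star\mu_t\|_{L^2(\mu_t\,dx)}$ uniformly over $f\in S_N$ with $\|f\|_{L^2([-A_N,A_N])}=1$. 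Using Assumption \ref{lowecond}, I would lower-bound the spatial weight $\mu_t(x)\geq g_1(t)\exp(-x^2/c_1)$, so that the inner integral dominates $g_1(t)\int_{\R}|(f\star\mu_t)(x)|^2 e^{-x^2/c_1}\,dx$; then pass to Fourier side, where $(f\star\mu_t)\widehat{\ }(z) = \widehat f(z)\widehat\mu_t(z)$, and use $|\widehat\mu_t(z)|\geq g_2(t)\exp(-z^2/c_2)$ on the relevant frequency range to get a lower bound of the form (constant)$\cdot g_1(t)g_2(t)^2$ times a Gaussian-weighted $L^2$ norm of $\widehat f$ restricted to frequencies $|z|\leq \pi D_N/A_N$. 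Integrating in $t$ produces the factor $\int_0^T g_1(t)g_2(t)^2\,dt$, hence $v_T^{-1}$ up to the $2\sqrt{2\pi}T$ normalization.

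The key step is tracking the exponential losses from the Gaussian weights. The frequencies appearing in $\widehat f$ for $f\in S_N$ range over $|z|\leq \pi D_N/A_N$, and on this set $\exp(-z^2/c_2)\geq \exp(-\pi^2 D_N^2/(c_2 A_N^2))$; with $D_N = a_D\log N$ and $A_N = a_A\sqrt{\log N}$ this is $\exp(-\pi^2 a_D^2 \log(N)/(c_2 a_A^2))$ — here I would need to reconcile the $\pi^2$ with the stated exponent $2a_D^2/(c_2 a_A^2)$, which suggests the authors absorb constants or use a sharper support/concentration argument (perhaps only the bulk of $\widehat f$'s mass, or a slightly different normalization of the frequencies); I would follow whichever convention makes the exponents match. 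Similarly, the spatial Gaussian weight restricted to $|x|\leq A_N$ (which is where one can control things) costs $\exp(-A_N^2/c_1) = \exp(-a_A^2\log(N)/c_1) = N^{-a_A^2/c_1}$. Combining, $\lambda_{\min}(\Psi) \gtrsim v_T^{-1} N^{-2a_D^2/(c_2 a_A^2) - a_A^2/c_1}$, which upon inversion gives exactly the claimed bound, with the $(1+o(1))$ absorbing lower-order corrections from restricting integrals to compact sets and from the Parseval/Plancherel manipulations on $[-A_N,A_N]$ versus $\R$.

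More precisely, I would organize the argument as follows. First, fix $f = \sum_{k=1}^{D_N} c_k e_k$ normalized so that $\sum_k |c_k|^2 = 1$, equivalently $\|f\|_{L^2([-A_N,A_N])}=1$. Second, write $\|f\|_\star^2 = \frac1T\int_0^T I_t\,dt$ with $I_t = \int_\R |(f\star\mu_t)(x)|^2\mu_t(x)\,dx$, and lower bound $I_t \geq g_1(t)\int_{|x|\leq A_N}|(f\star\mu_t)(x)|^2 e^{-x^2/c_1}\,dx \geq g_1(t) e^{-A_N^2/c_1}\int_{|x|\leq A_N}|(f\star\mu_t)(x)|^2\,dx$. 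Third, relate $\int_{|x|\leq A_N}|(f\star\mu_t)(x)|^2\,dx$ to a frequency-side integral: since $f$ is supported on $[-A_N,A_N]$ (as a function in the span of the $e_k$, viewed $A_N$-periodically, or after the appropriate truncation), its Fourier coefficients are the $c_k$, and $(f\star\mu_t)$ has Fourier content $c_k\widehat\mu_t(-k/A_N)$ at frequency $k$; by orthogonality this integral equals (a constant times) $\sum_k |c_k|^2 |\widehat\mu_t(k/A_N)|^2 \geq g_2(t)^2 e^{-2\pi^2 D_N^2/(c_2 A_N^2)}\sum_k|c_k|^2 = g_2(t)^2 e^{-2\pi^2 D_N^2/(c_2A_N^2)}$ — note here there are three $\widehat\mu_t$ factors in \eqref{Psiidenti}, so actually the relevant bound is on $\widehat\mu_t(-k/A_N)\widehat\mu_t(l/A_N)\widehat\mu_t((l-k)/A_N)$, which for the \emph{diagonal} is $\widehat\mu_t(k/A_N)^2\widehat\mu_t(0) = \widehat\mu_t(k/A_N)^2$ since $\widehat\mu_t(0)=1$; the off-diagonal terms I would handle by showing $\Psi$ is dominated below by its behavior on a suitable test function, or by a direct positive-definiteness argument using that $\Psi$ is a Gram matrix. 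Fourth, assemble: $\|f\|_\star^2 \geq \big(\frac1T\int_0^T g_1(t)g_2(t)^2\,dt\big)\, e^{-A_N^2/c_1 - 2\pi^2 D_N^2/(c_2A_N^2)}\cdot(\text{const})$, substitute the logarithmic scalings, and invert.

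I expect the main obstacle to be the off-diagonal structure of $\Psi$ in \eqref{Psiidenti}: unlike a naive diagonal matrix, $\Psi_{kl}$ couples frequencies $k,l$ through the extra factor $\widehat\mu_t((l-k)/A_N)$, so $\Psi$ is not diagonal and a clean lower bound on its smallest eigenvalue requires either a Schur-type/diagonal-dominance estimate or, more robustly, going back to the representation $c^\top\Psi c = \|\sum_k c_k e_k\|_\star^2$ and carrying out the Fourier–Plancherel lower bound there (which automatically encodes all the off-diagonal terms correctly). Getting the constants to line up precisely with $v_T = 2\sqrt{2\pi}\,T\big(\int_0^T g_1 g_2^2\big)^{-1}$ and the exponent $\frac{2a_D^2}{c_2a_A^2}+\frac{a_A^2}{c_1}$ — in particular the $2\sqrt{2\pi}$ and the factor $2$ versus $\pi^2$ in front of $a_D^2$ — will require care about normalization conventions (which Fourier transform, whether $D_N$ counts frequencies $1,\dots,D_N$ or $\pi k/A_N$ up to some bound, Gaussian integral constants), and I would choose these conventions to be consistent with the rest of the paper.
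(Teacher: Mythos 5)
Your approach is correct, but it is genuinely different from the paper's. The paper keeps the matrix perspective throughout: it factors the integrand of \eqref{Psiidenti} as (diagonal)$\times$(Toeplitz)$\times$(diagonal), pulls out $\min_k\widehat{\mu}_t(k/A_N)^2\geq g_2(t)^2\exp(-2D_N^2/(c_2A_N^2))$ from the diagonal parts via $\lambda_{\min}(Q_1Q_2)\geq\lambda_{\min}(Q_1)\lambda_{\min}(Q_2)$, and then lower-bounds the smallest eigenvalue of the Toeplitz matrix $R_t^{kl}=\widehat{\mu}_t((l-k)/A_N)$ by the infimum of its symbol $F(\omega)$, which it computes by Poisson summation — this converts the symbol back into a sum of values of $\mu_t$, where the spatial Gaussian lower bound of Assumption \ref{lowecond} applies and produces the factor $\frac{A_N g_1(t)}{\sqrt{2\pi}}\exp(-A_N^2/c_1)$ (whence the $2\sqrt{2\pi}$ in $v_T$). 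You instead stay with the quadratic form $c^{*}\Psi c=\|f\|_{\star}^2$, $f=\sum_k c_k e_k$, bound $\mu_t(x)\geq g_1(t)e^{-A_N^2/c_1}$ pointwise on $[-A_N,A_N]$, and then use the exact orthonormality of the $e_k$ in $L^2([-A_N,A_N])$ together with the eigenfunction identity $(e_k\star\mu_t)=\widehat{\mu}_t(-\pi k/A_N)\,e_k$ to reduce the inner integral to $\sum_k|c_k|^2\,\widehat{\mu}_t(\pi k/A_N)^2$; this is precisely the "direct positive-definiteness argument" you flag as the robust option, and it disposes of the off-diagonal coupling $\widehat{\mu}_t((l-k)/A_N)$ — your correctly identified main obstacle — without any Toeplitz or Poisson-summation machinery. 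The two routes give the same exponent $\frac{2a_D^2}{c_2a_A^2}+\frac{a_A^2}{c_1}$ (your version even yields a slightly smaller constant than $v_T$, which is harmless for an upper bound on $\|\Psi^{-1}\|_{\opr}$); what the paper's route buys is independence from the specific exponential basis, at the cost of the symbol/Poisson apparatus. Two minor points: the $\pi^2$ versus $2$ discrepancy you worry about originates in the paper's own convention of writing $\widehat{\mu}_t(k/A_N)$ for what is honestly $\widehat{\mu}_t(\pi k/A_N)$ in \eqref{Psiidenti}, so following that convention your exponents match exactly; and $f=\sum_k c_ke_k$ is not actually supported in $[-A_N,A_N]$ (the $e_k$ are global complex exponentials), but your argument only needs orthonormality on that interval plus the convolution identity, both of which hold, so this slip is immaterial.
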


\noindent
In the next step we will study the norm $\|\vphi_{\star} - \vphi\|_{\star \star}$. For this purpose, we assume that $\vphi$ has the representation
\[
\vphi(x)= \sum_{k=1}^{\infty} c_k e_k(x)
\]
with coefficients $c_k=c_k(N)$ satisfying the condition $\sum_{k=1}^{\infty} |c_k|<\infty$.

\begin{prop} \label{doublestar}
Suppose that Assumption \ref{assmu0} is satisfied. Then it holds that 
\[
\|\vphi_{\star} - \vphi\|_{\star \star}^2 \lesssim \left(1+
 A_N^{-1} D_N^4
\|\Psi^{-1}\|_{\text{\rm op}}^2 \exp\left(-\frac{3D_N^2\zeta^2}{2A_N^2}\right)\right) \left(\sum_{k=D_N+1}^{\infty} |c_k|\right)^2 .
\]
\end{prop}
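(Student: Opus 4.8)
The plan is to bound the $\|\cdot\|_{\star\star}$-error by splitting $\vphi - \vphi_\star$ into the genuine truncation tail $R_N := \sum_{k > D_N} c_k e_k$ and the discrepancy $\vphi_\star - \Pi_N\vphi$, where $\Pi_N\vphi := \sum_{k\le D_N} c_k e_k$ is the $L^2([-A_N,A_N])$-projection of $\vphi$ onto $S_N$. Since $\vphi_\star = \argmin_{f\in S_N}\|f-\vphi\|_\star$, we have $\|\vphi_\star - \vphi\|_\star \le \|\Pi_N\vphi - \vphi\|_\star$, and by the triangle inequality for $\|\cdot\|_\star$ together with $\|\vphi_\star - \Pi_N\vphi\|_\star \le \|\vphi_\star - \vphi\|_\star + \|\vphi - \Pi_N\vphi\|_\star \le 2\|R_N\|_\star$. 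The point of passing to $\|\cdot\|_{\star\star}$ is that $\vphi_\star - \Pi_N\vphi$ lies in the finite-dimensional space $S_N$, so its $\|\cdot\|_{\star\star}$-norm can be controlled by its $\|\cdot\|_\star$-norm at the cost of a factor involving $\|\Psi^{-1}\|_\opr$ and the sup-norms of the basis functions; meanwhile $R_N$ is handled directly.

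First I would record the elementary bound $\|f\|_{\star\star}^2 = \frac1T\int_0^T\int (f^2\star\mu_t)(x)\mu_t(x)\,dx\,dt \le \|f\|_\infty \cdot \frac1T\int_0^T\int(|f|\star\mu_t)(x)\mu_t(x)\,dx\,dt$, and more usefully, for $f = \sum_{k\le D_N} b_k e_k \in S_N$, expand $f^2\star\mu_t$ in the Fourier basis: $f^2 = \sum_{k,l}b_k\overline{b_l}\,e_k\overline{e_l}$ localises the analysis to $[-A_N,A_N]$, and the convolution introduces factors $\widehat\mu_t$ evaluated at frequencies $\le 2D_N/A_N$. Using $\|b\|_1 \le \sqrt{D_N}\|b\|_2$ and $\|f\|_\star^2 \ge \|\Psi^{-1}\|_\opr^{-1}\|b\|_2^2$ (since $\Psi$ is the Gram matrix of $(e_k)$ in the $\star$-inner product), one gets $\|f\|_{\star\star}^2 \lesssim A_N^{-1}D_N^2 \|b\|_1^2 \cdot (\text{Fourier decay factor}) \lesssim A_N^{-1}D_N^3\|\Psi^{-1}\|_\opr \|f\|_\star^2 \cdot(\text{Fourier decay})$. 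Under Assumption \ref{assmu0} the decay factor $|\widehat\mu_t(\xi)| \le \exp(-\xi^2\zeta^2/2)$ evaluated around $\xi \approx D_N/A_N$ (the worst surviving frequency in a triple product, after cancellation, is of order $3D_N/A_N$ once one tracks the convolution carefully — this is where the $\exp(-3D_N^2\zeta^2/(2A_N^2))$ comes from) gives the stated exponential gain. Combining, $\|\vphi_\star - \Pi_N\vphi\|_{\star\star}^2 \lesssim A_N^{-1}D_N^4\|\Psi^{-1}\|_\opr^2 \exp(-3D_N^2\zeta^2/(2A_N^2))\,\|R_N\|_\star^2$, while $\|R_N\|_\star^2 \le \|R_N\|_{\star\star}^2 \lesssim (\sum_{k>D_N}|c_k|)^2$ by the trivial bound $\|R_N\|_{\star\star} \le$ something controlled by the $\ell^1$-norm of its coefficients and Young's inequality; adding back $\|R_N\|_{\star\star}^2$ itself produces the leading "$1$" in the parenthesis and yields the claim.

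The main obstacle I anticipate is the precise bookkeeping of the Fourier-side estimate for $f^2\star\mu_t$ with $f\in S_N$ — namely establishing that the effective frequency content of $f^2\star\mu_t$, after the truncation to $[-A_N,A_N]$ is accounted for, produces exactly the exponent $3D_N^2\zeta^2/(2A_N^2)$ and the polynomial prefactor $A_N^{-1}D_N^4$. One has to be careful that $f\in S_N$ does not mean $f^2$ is supported on a bounded frequency band in the genuine Fourier-transform sense (the $e_k$ are not compactly supported on $\R$); rather $f$ agrees on $[-A_N,A_N]$ with a trigonometric polynomial of degree $\le D_N$, so the argument should work with the periodised/localised version and Parseval on $[-A_N,A_N]$, then transfer to the $\|\cdot\|_{\star\star}$ integral over all of $\R$ by splitting the $x$-integral at $|x| = A_N$ and using the Gaussian tail of $\mu_t$ for $|x| > A_N$ — the latter contributes a term of the same or smaller order. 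A secondary technical point is making rigorous the comparison $\|f\|_\star^2 \ge \|\Psi^{-1}\|_\opr^{-1}\|b\|_2^2$, which is immediate from the definition of $\Psi$ in \eqref{Psidef} and positive-definiteness, and the relation $\|\vphi_\star - \Pi_N\vphi\|_\star \le 2\|R_N\|_\star$, both of which are routine once set up correctly.
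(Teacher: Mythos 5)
Your decomposition is the same as the paper's ($\vphi-\vphi_\star=\vphi_2-\Pi^\star\vphi_2$ with $\vphi_2=\sum_{k>D_N}c_ke_k$), and your treatment of the tail term $\vphi_2$ via $\|\vphi_2\|_{\star\star}\le\|\vphi_2\|_\infty$ is fine. The gap is in the second half. Your key step is a norm-comparison inequality on the finite-dimensional space: for every $f=\sum_{k\le D_N}b_ke_k\in S_N$,
\[
\|f\|_{\star\star}^2\ \lesssim\ A_N^{-1}D_N^{3}\,\|\Psi^{-1}\|_\opr\,\exp\Bigl(-\tfrac{3D_N^2\zeta^2}{2A_N^2}\Bigr)\,\|f\|_\star^2 ,
\]
which you then apply to $f=\vphi_\star-\Pi_N\vphi$ together with $\|\vphi_\star-\Pi_N\vphi\|_\star\le 2\|\vphi_2\|_\star$. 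This inequality is false. Take $f=e_1$: then $f^2\star\mu_t(x)=(2A_N)^{-1}e^{2i\pi x/A_N}\widehat\mu_t(-2\pi/A_N)$, so $\|e_1\|_{\star\star}^2\asymp A_N^{-1}$, while $\|e_1\|_\star^2=\Psi_{11}\asymp A_N^{-1}$ as well by \cref{Psiidenti}; there is no exponential separation between the two norms for low-frequency elements of $S_N$. More conceptually, a comparison constant valid for \emph{all} of $S_N$ cannot contain a factor $\exp(-cD_N^2\zeta^2/A_N^2)$, because that factor reflects high frequency content, and $S_N$ contains low-frequency functions. Your fallback — that $\|\vphi_2\|_\star^2$ itself decays like $\exp(-D_N^2\zeta^2/A_N^2)(\sum_{k>D_N}|c_k|)^2$ — combined with a polynomial comparison constant only yields the exponent $D_N^2\zeta^2/A_N^2=\tfrac{2}{2}\cdot\tfrac{D_N^2\zeta^2}{A_N^2}$, which is strictly weaker than the claimed $\tfrac{3}{2}\cdot\tfrac{D_N^2\zeta^2}{A_N^2}$, so the stated proposition is not recovered either way.

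The missing idea is that the exponential gain lives in the \emph{coefficients} of $\Pi^\star\vphi_2$, not in a norm equivalence on $S_N$: passing from $\Pi^\star\vphi_2$ to the bound $\|\Pi^\star\vphi_2\|_\star\le 2\|\vphi_2\|_\star$ discards exactly this information. The paper instead writes $\langle\bar e_j,\vphi_2\rangle_\star=\sum_{k\le D_N}\sum_{l>D_N}\Psi^{-1/2}_{jk}c_l\,\langle e_k,e_l\rangle_\star$ and uses the triple-product formula \cref{Psiidenti} together with the Gaussian bound \cref{muhatbounded} to get $|\langle e_k,e_l\rangle_\star|\lesssim A_N^{-1}\exp\bigl(-\tfrac{\zeta^2}{2A_N^2}(k^2+l^2+(l-k)^2)\bigr)$; minimizing $k^2+l^2+(l-k)^2$ over $k\le D_N<l$ (attained near $l\approx D_N$, $k\approx l/2$, value $\approx\tfrac{3}{2}D_N^2$) produces $\exp(-\tfrac{3D_N^2\zeta^2}{4A_N^2})$ per coefficient, hence $\exp(-\tfrac{3D_N^2\zeta^2}{2A_N^2})$ after squaring; the sup-norm of $\Pi^\star\vphi_2$ is then controlled by $\sum_j|\langle\bar e_j,\vphi_2\rangle_\star|\,\|\bar e_j\|_\infty$ and $\|\cdot\|_{\star\star}\le\|\cdot\|_\infty$. (Your heuristic that the exponent comes from a "worst surviving frequency of order $3D_N/A_N$" is also off: that would give $\exp(-9D_N^2\zeta^2/(2A_N^2))$.) To repair the proof you must quantify the near-$\star$-orthogonality between the retained and discarded basis functions rather than compare norms on $S_N$.
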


\noindent
Now, we can combine the statements of Theorem  \ref{th1}, Proposition 
\ref{propPsi} and Proposition \ref{doublestar},  to derive the convergence rate for  $\|\wphi_{N} -\vphi \|_{\star}$. 

For clarity, we focus on the ergodic scenario where
 $\mu_t=\mu_0$ and $\mu_0$ satisfies Assumption \ref{assmu0}. In this context,
the constants $c_1$ and $c_2$ from Assumption \ref{lowecond}  are explicitly given by
\[
c_1= 2\zeta^2, \qquad c_2= \frac{2}{\zeta^2}.
\]
As $\|e_j\|_{\infty}^2 = A_N^{-1}$ and $D_N=a_{D} \log(N)$, $A_N=a_A \sqrt{\log(N)}$, Proposition  \ref{propPsi} implies the following bound:
\[
L_N^2 \|\Psi^{-1}\|_{\text{op}}^2 \lesssim \frac{a_D^2}{a_A^2} \log(N)
N^{\frac{2a_D^2\zeta^2}{a_A^2} + \frac{a_A^2}{\zeta^2}}.
\]
Hence, for Assumption \ref{assm} to hold true, the constants $a_A$ and $a_D$ need to fulfill 
\[
\frac{2a_D^2\zeta^2}{a_A^2} + \frac{a_A^2}{\zeta^2}<1.
\]
On the other hand, exploring the proof of Theorem
\ref{approxerr}, the approximation error is obtained as 
\[
\|\vphi_{\star} - \vphi\|_{ \star}^2 \lesssim N^{-\frac{a_D^2\zeta^2}{a_A^2}}
+N^{-\frac{a_A^2}{2\zeta^2}}.
\]
Consequently, if we choose $a_D^2\zeta^2/a_A^2= a_A^2/(2\zeta^2)=(1-\epsilon)/4$ for a small $\epsilon >0$, we finally conclude that 
\begin{align}
\E\left[\|\wphi_{N} -\vphi \|_{\star}^2 \right]\lesssim N^{-1/4 +\epsilon}
\end{align}
provided the condition $(\sum_{k=D_N+1}^{\infty} |c_k|)^2\leq CN^{-z}$ with
$z>5/8$ holds. 
The substantial decrease in the convergence rate, as observed compared to Theorem \ref{approxerr}, directly stems from the stringent constraint imposed by Assumption \ref{assm}. Still this is the first result in  literature regarding the convergence rates of the linear-type estimates within the context of McKean-Vlasov Stochastic Differential Equations.

\section{Lower bounds} \label{sec.low}

\noindent
This section is dedicated to deriving minimax lower bounds. We start by establishing a lower bound for the previously examined estimation problem concerning the norm $\|\cdot\|_{\star}$. To achieve this, we focus on a simplified scenario involving i.i.d.~observations drawn from a McKean-Vlasov SDE:
\begin{equation*} 
\left\{
\begin{array}{ll}
	dX_t =  (\vphi \star \mu_t )(X_t)\, dt + \sigma dB_t,\\ \textnormal{Law}(X_t)=\mu_t,
\end{array}
\right.
\end{equation*}
and denote by $\P_{\vphi}$ the associated probability measure. We assume that $X_0^{\vphi}=x \in \R$ for all $\vphi$. 
The ensuing theorem establishes a minimax lower bound with respect to  the norm $\|\cdot\|_{\star}$.

\begin{theo} \label{Lower1}
There exists a constant $c>0$ such that, for every $N\in\N$,
\begin{equation}
		\inf_{\widehat\varphi_N}\sup_{\varphi}\P^{\bigotimes N}_\varphi\left(
		\|\widehat \varphi_N-\varphi\|_{\star}>cN^{-1/2}\right)>0,
	\end{equation}
	where $\P^{\bigotimes N}_\varphi$ is the $N$-fold product measure, the supremum is taken over all functions
 $\vphi$ satisfying Assumption \ref{assphi} and $\|\vphi\|_\infty\leq (32T)^{-1/2}$, and the infimum is taken over all estimators of $\varphi$ retrieved from $N$ observations of $(X_t)_{t\in[0,T]}$.
\end{theo}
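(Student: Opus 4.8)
\textbf{Proof plan for Theorem \ref{Lower1}.}

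The plan is to apply the standard two-point (or, more likely, a many-hypotheses / Fano-type) reduction scheme for minimax lower bounds, where the key quantities to control are the $\|\cdot\|_{\star}$-separation between the competing interaction functions and the Kullback--Leibler divergence between the induced laws $\P_\varphi^{\otimes N}$ on path space. First I would build a finite family $\{\varphi_j\}_{j\in J}$ of interaction functions, all satisfying Assumption \ref{assphi} and the bound $\|\varphi_j\|_\infty\le(32T)^{-1/2}$, of the form $\varphi_j=\varphi_0+\rho\,\omega_j$ for a fixed centre (e.g.\ $\varphi_0=0$), a small amplitude $\rho\sim N^{-1/2}$, and suitably chosen perturbation functions $\omega_j$ (for instance, of the Fourier type appearing in Assumption \ref{assphi}, so that membership in the hypothesis class is automatic). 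The perturbations should be chosen so that (a) $\|\varphi_i-\varphi_j\|_{\star}\gtrsim\rho\gtrsim N^{-1/2}$ for $i\ne j$ — here one uses that $\|\cdot\|_{\star}$ is a genuine norm on the relevant finite-dimensional space together with a lower bound on the smallest eigenvalue of the Gram matrix $\Psi$ from \eqref{Psidef} restricted to the span of the $\omega_j$ (this is exactly where Assumption \ref{lowecond}-type lower bounds on $\mu_t$ and $\widehat\mu_t$, or the explicit Gaussian computation, enter), and (b) the number of hypotheses and their pairwise KL distances are balanced correctly.

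The second ingredient is the information bound. By Girsanov's theorem, for observations of $(X_t)_{t\in[0,T]}$ under $\P_\varphi$ versus $\P_{\varphi'}$ the log-likelihood ratio is an explicit stochastic integral, and
\[
\mathrm{KL}\bigl(\P_\varphi^{\otimes N}\,\big\|\,\P_{\varphi'}^{\otimes N}\bigr)
=\frac{N}{2\sigma^2}\,\E_\varphi\!\left[\int_0^T\bigl((\varphi\star\mu_t^{\varphi})(X_t)-(\varphi'\star\mu_t^{\varphi'})(X_t)\bigr)^2\,dt\right].
\]
The delicate point is that the measure flow $\mu_t$ itself depends on the interaction function, so the integrand is not simply $(\varphi-\varphi')\star\mu_t$ evaluated along a fixed law; one must show that the difference $(\varphi\star\mu_t^{\varphi})-(\varphi'\star\mu_t^{\varphi'})$ is still of order $\rho$ in $L^2(\mu_t^{\varphi})$. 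This follows by writing $(\varphi\star\mu_t^{\varphi})-(\varphi'\star\mu_t^{\varphi'})=(\varphi-\varphi')\star\mu_t^{\varphi}+\varphi'\star(\mu_t^{\varphi}-\mu_t^{\varphi'})$ and using a stability/propagation estimate for the granular-media flow \eqref{eq:pde}: since $\|\varphi-\varphi'\|_{\infty}$ and the Lipschitz constants are uniformly bounded (Assumption \ref{assLip}), a Gronwall argument over $[0,T]$ gives $\sup_{t\le T}\|\mu_t^{\varphi}-\mu_t^{\varphi'}\|_{\mathrm{TV}\text{ or }W_1}\lesssim\rho$, so the whole integrand is $O(\rho)$ uniformly. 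Hence $\mathrm{KL}(\P_\varphi^{\otimes N}\|\P_{\varphi'}^{\otimes N})\lesssim N\rho^2\lesssim 1$ with the choice $\rho\asymp N^{-1/2}$.

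With separation $\gtrsim N^{-1/2}$ and total information $\lesssim 1$ (or, in the many-hypotheses version, $\frac{1}{|J|}\sum_j\mathrm{KL}(\P_{\varphi_j}^{\otimes N}\|\P_{\varphi_0}^{\otimes N})\le\alpha\log|J|$ with $\alpha$ small), the conclusion follows from the standard Le Cam / Fano lower bound theorem, which yields $\inf_{\widehat\varphi_N}\sup_\varphi\P_\varphi^{\otimes N}(\|\widehat\varphi_N-\varphi\|_{\star}>cN^{-1/2})\ge c'>0$ for appropriate absolute constants $c,c'$. I expect the main obstacle to be step two: precisely controlling the KL divergence in the presence of the nonlinear dependence of $\mu_t$ on $\varphi$, i.e.\ proving the stability estimate $\|\mu_t^{\varphi}-\mu_t^{\varphi'}\|\lesssim\|\varphi-\varphi'\|$ with the right norm on the left-hand side so that it feeds cleanly into the $L^2(\mu_t)$ bound; a secondary technical point is verifying that the constraint $\|\varphi_j\|_\infty\le(32T)^{-1/2}$ (rather than a generic ``small constant'') is compatible with retaining a separation of the full order $N^{-1/2}$, which fixes the admissible amplitude $\rho$ and hence the constant $c$.
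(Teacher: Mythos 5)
Your overall skeleton --- a two-point reduction, the Girsanov formula for the Kullback--Leibler divergence between path measures, hypotheses of the form $\varphi_1=\varphi_0+\rho\,\omega$ with $\rho\asymp N^{-1/2}$, and Tsybakov's two-hypotheses theorem --- is exactly what the paper does, and you correctly identify the one genuinely delicate point: the law $\mu_t$ itself depends on $\varphi$, so the integrand in the KL formula is not simply $(\varphi_0-\varphi_1)\star\mu_t$. Where you diverge from the paper is in how this is resolved. You propose a direct stability (Gronwall) estimate $\sup_{t\le T}\|\mu_t^{\varphi_0}-\mu_t^{\varphi_1}\|\lesssim\rho$ for the nonlinear flow, to be proved separately. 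The paper instead uses a self-bounding argument: after splitting $\varphi_0\star\mu^0-\varphi_1\star\mu^1=(\varphi_0-\varphi_1)\star\mu^0+\varphi_1\star(\mu^0-\mu^1)$, it invokes Theorem~1.1 of \cite{BRS16} to bound $\int_0^T\bigl[\int|\mu_t^0-\mu_t^1|\,dx\bigr]^2dt\le 8T\,\mathrm{KL}(\P_{\varphi_0},\P_{\varphi_1})$, i.e.\ it controls the perturbation of the marginals by the KL divergence itself, and then absorbs this term into the left-hand side using the smallness constraint $\|\varphi_1\|_\infty^2\le(32T)^{-1}$. This yields the clean inequality $\mathrm{KL}\le 2T\|\varphi_0-\varphi_1\|_\star^2$ with no auxiliary stability lemma, and it is precisely what explains the otherwise odd-looking constant $(32T)^{-1/2}$ in the statement (which you flagged as a mystery). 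Your route can be made to work, but be aware that the step you defer is the hardest one: a synchronous coupling naturally gives Wasserstein, not total-variation, control of $\mu_t^{\varphi_0}-\mu_t^{\varphi_1}$, so you would either need Lipschitz control of $\varphi_1$ to convert $W_1$-closeness into closeness of $\varphi_1\star\mu^0$ and $\varphi_1\star\mu^1$, or a Girsanov--Pinsker argument on the marginals --- at which point you have essentially rederived the \cite{BRS16} bound. The paper's absorption trick buys you a shorter proof at the price of the explicit sup-norm restriction on the hypothesis class; your version, once the stability lemma is supplied, would work without that restriction but with constants depending on $e^{CT}$.
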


\begin{proof}
Consider two functions $\vphi_0,\vphi_1 \in S$ and denote by $\mu_t^0$ and $\mu_t^1$ the corresponding marginal densities. The Kullback-Leibler divergence $\text{KL}(\P_{\vphi_0}, \P_{\vphi_1})$ between the probability measures 
$\P_{\vphi_0}$ and $\P_{\vphi_1}$ can be explicitly computed as
\begin{align*}
\text{KL}(\P_{\vphi_0}, \P_{\vphi_1}) & =\frac{1}{2}\int_{0}^{T}\int_{\mathbb{R}}\left(\varphi_{0}\star\mu_{t}^{0}(x)-\varphi_{1}\star\mu_{t}^{1}(x)\right)^{2}\mu_{t}^{0}(x)\,dx\,dt.
\end{align*}
As a consequence we obtain the inequality 
\begin{align*}
\text{KL}(\P_{\vphi_0}, \P_{\vphi_1}) & \leq T\|\varphi_{0}-\varphi_{1}\|_{\star}^{2}+\|\varphi_{1}\|_{\infty}^{2}\int_{0}^{T}\left[\int_{\mathbb{R}}\left|\mu_{t}^{0}(x)-\mu_{t}^{1}(x)\right|\,dx\right]^{2}\,dt,
\end{align*}
where the $\|\cdot\|_{\star}$-norm is computed with respect to $\mu_t^0$. 
Furthermore, using Theorem~1.1 of \cite{BRS16}, we derive
\begin{align*}
&\int_{0}^{T}\left[\int_{\mathbb{R}}\left|\mu_{t}^{0}(x)-\mu_{t}^{1}(x)\right|\,dx\right]^{2} dt
\\[1.5 ex]
& \leq4\int_{0}^{T}\left[\int_{0}^{t}\int(\varphi_{0}\star\mu_{s}^{0}(x)-\varphi_{1}\star\mu_{s}^{1}(x))^{2}\mu_{s}^{0}(x)\,dx\,ds\right]\,dt
 \\[1.5 ex]
 & \leq8T\text{KL}(\P_{\vphi_0}, \P_{\vphi_1}).
\end{align*}
Since $\|\varphi_{1}\|_{\infty}^{2}\leq(16T)^{-1}$ we finally deduce that  
\[
\text{KL}(\P_{\vphi_0}, \P_{\vphi_1})\leq2T\|\varphi_{0}-\varphi_{1}\|_{\star}^{2}.
\]
Applying the two hypotheses method described in \cite[Theorem 2.2]{T09}, we obtain the statement of the theorem. 
\end{proof}

We will now consider the $L^2(\R)$-norm instead of $\|\cdot\|_{\star}$ and compare the results to 
\cite{BPP22}. We recall that logarithmic rates have been obtained in the $L^2(\R)$-norm  in \cite[Theorem 5.1]{BPP22} for a similar estimation problem given the observation scheme $(X_T^{i,N})_{1\leq i \leq N}$ with $N,T\to \infty$. It turns out that these rates cannot be improved to polynomial ones in their framework, and the same holds in our case (note however that polynomial rates may appear under a different set of conditions as it has been demonstrated in \cite{ABPPZ23}). To see this, we again consider simplified setting of i.i.d.~observations drawn from McKean-Vlasov SDE
\cref{eq:McKeanEq}. Furthermore, we consider a similar model as presented in \cite{BPP22}: Let $W:\R \to \R$ be an even and strictly convex interaction potential of the form
\[
W(x) = p(x) + \beta(x), \qquad p(x) = \sum_{j=1}^J a_j x^{2j},
\]   
where $(a_j)_{j=1,\dots,J}$ are known constants and $\beta\in C_c^2(\R)$. The potential $W$ determines the interaction function $\vphi$ via $\vphi=W'$ and we are interested in the estimation of the non-parametric part $\beta$ in the case of stationarity. The invariant law $\mu$ solves the elliptic nonlinear Fokker-Planck equation
\begin{equation}
	\left[\frac12\frac{\d^2}{\d x^2}+\frac\d{\d x}\left(\varphi\star\mu\right)\right]\mu=0,
\end{equation}
meaning that $\mu$ is given by the implicit equation
\begin{equation}
	\mu(x)=c_\mu\exp\left(-W\star\mu(x)\right),
\end{equation}
where $c_\mu$ is a normalising constant such that $\mu$ is a probability density. 

To introduce the appropriate class of functions, we consider $C_c^2(\R)$-wavelets. More specifically, we let the mother wavelet be a symmetric function
$\psi \in C_c^2(\R)$ with $\text{supp} (\psi) = [-1/2,1/2]$ and, for $n,k \in \Z$, define  
\[
\psi_{n,k}(x) = 2^{n/2} \psi(2^nx-k).
\]
The set $(\psi_{n,k})_{n,k \in \Z}$ forms a basis of $L^2(\R)$. Now, for $\al,\ K>0$, we consider the functional space
\begin{equation*}
	\mathcal F^\alpha_K=\left\{\beta^\prime: ~\beta\in C^2_c(\R),\ \sum_{n=1}^\infty\sum_{|k|>m}\lan \beta^\prime, \psi_{n,k}\ran_{L^2(\R)}^2\leq K m^{-\alpha}\quad \forall m\in\N\right\}.
\end{equation*}
For the estimation of the nonparametric part $\beta'$ of $\vphi$ based on i.i.d.\ observations of \cref{eq:McKeanEq} we obtain the following lower bound. 

\begin{theo} \label{minimax}
Assume that the coefficients $(a_j)_{1\leq j\leq J}$ and the function
$\beta$ satisfy the  conditions of \cite[Theorem 5.1]{BPP22}. Then there exists a constant $c>0$ such that, for every $N\in\N$,
\begin{equation}
		\inf_{\widehat\beta'_N}\sup_{\beta'\in \mathcal F^\alpha_K}\P^{\bigotimes N}_\varphi\left(
		\|\widehat \beta'_N-\beta'\|_{L^2(\R)}^2>c\left(\log N\right)^{-\frac \alpha{4J}}\right)>0,
	\end{equation}
	where $\P^{\bigotimes N}_\varphi$ is the $N$-fold product measure and the infimum is taken over all estimators of $\beta'$ retrieved from $N$ observations of $(X_t)_{t\in[0,T]}$.
\end{theo}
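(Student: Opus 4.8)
The plan is to use Fano's method (or a variant thereof) with a carefully constructed finite family of hypotheses $\{\beta'_\omega\}_{\omega \in \Omega}$ inside the class $\mathcal{F}^\alpha_K$, exactly paralleling the construction in \cite[Theorem 5.1]{BPP22} but now tracking the Kullback-Leibler divergence for $N$ i.i.d.\ observations of the path $(X_t)_{t\in[0,T]}$ rather than the single marginal $X_T$. First I would fix a resolution level $n = n_N$ (to be chosen of order $\log\log N$ up to constants, so that the eventual rate is a power of $\log N$), and take a subfamily of perturbations $\beta'_\omega = \gamma \sum_{k \in \mathcal{K}} \omega_k \psi_{n,k}$ with $\omega \in \{0,1\}^{\mathcal{K}}$, amplitude $\gamma = \gamma_N$ tuned so that the membership constraint $\sum_{n}\sum_{|k|>m}\langle \beta'_\omega,\psi_{n,k}\rangle_{L^2}^2 \leq K m^{-\alpha}$ holds and so that the corresponding potentials $\beta_\omega$ remain admissible in the sense of \cite[Theorem 5.1]{BPP22} (small $C^2_c$ perturbation of a fixed convex polynomial part $p$). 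A Varshamov-Gilbert argument produces a subset of size $\exp(c|\mathcal{K}|)$ that is $\gtrsim \gamma \sqrt{|\mathcal{K}|}$-separated in $L^2(\R)$; this separation squared is the target rate, so I need $\gamma^2 |\mathcal{K}| \asymp (\log N)^{-\alpha/(4J)}$.

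The second and central step is to bound the KL divergence $\mathrm{KL}(\P^{\otimes N}_{\varphi_\omega}, \P^{\otimes N}_{\varphi_{\omega'}}) = N\,\mathrm{KL}(\P_{\varphi_\omega},\P_{\varphi_{\omega'}})$. Using the same Girsanov computation already displayed in the proof of Theorem~\ref{Lower1}, together with the stability estimate from \cite[Theorem 1.1]{BRS16}, one gets $\mathrm{KL}(\P_{\varphi_\omega},\P_{\varphi_{\omega'}}) \lesssim \|\varphi_\omega - \varphi_{\omega'}\|_{\star,\omega}^2$ where the $\star$-norm is with respect to $\mu^\omega_t$. The key quantitative input is the contractivity of the $\star$-norm relative to $L^2$: because the stationary (and hence all time-$t$) densities $\mu_t$ decay like a Gaussian (via the lower/upper bounds available under the $p+\beta$ structure, with $p$ of degree $2J$ giving $\mu \sim \exp(-c|x|^{2J})$), convolution with $\mu_t$ annihilates the high-frequency content of $\beta'_\omega - \beta'_{\omega'}$ at an essentially exponential rate in the resolution level $n$. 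Concretely, $\|\beta'_\omega - \beta'_{\omega'}\|_\star^2 \lesssim e^{-c 2^{2n}} \cdot \|\beta'_\omega - \beta'_{\omega'}\|_{L^2}^2$ (or a comparable super-polynomial-in-$2^n$ decay coming from the Fourier decay of $\widehat{\mu}_t$), so that $N\,\mathrm{KL} \lesssim N \gamma^2 |\mathcal{K}|\, e^{-c 2^{2n}}$. Choosing $2^{2n} \asymp c^{-1}\log N$ makes this quantity $o(|\mathcal{K}|)$, which is exactly the Fano condition $N\,\mathrm{KL} \leq \tfrac{1}{16}\log|\text{family}|$; translating $2^n \asymp (\log N)^{1/2}$ back through the constraint $\gamma^2|\mathcal{K}| \asymp m^{-\alpha}$ with the band of active indices $|\mathcal{K}| \asymp 2^n$ and $m \asymp 2^n$ yields the rate $(\log N)^{-\alpha/(4J)}$, the factor $4J$ entering through the interplay between the Gaussian-type decay exponent $2J$ of $\mu$ and the quadratic exponent in $2^{2n}$.

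The final step is bookkeeping: verify that the constructed $\beta'_\omega$ genuinely lie in $\mathcal{F}^\alpha_K$ for the chosen $\gamma_N, n_N$ (only finitely many levels and a band of $k$'s are perturbed, so the tail sum over $|k|>m$ is nonzero only for $m \lesssim 2^{n_N}$ and there it is $\lesssim \gamma_N^2 2^{n_N} \lesssim m^{-\alpha}$ by the tuning), confirm that $\varphi_\omega = W'_\omega = p' + \beta'_\omega$ still satisfies the hypotheses of \cite[Theorem 5.1]{BPP22} (in particular strict convexity of $W_\omega$ and the resulting existence/uniqueness of the stationary law, which holds for $\gamma_N$ small), and then invoke \cite[Theorem 2.5]{T09} (the Fano-type reduction) to conclude $\inf_{\widehat\beta'_N}\sup_\beta \P^{\otimes N}_\varphi(\|\widehat\beta'_N - \beta'\|_{L^2}^2 > c(\log N)^{-\alpha/(4J)}) \geq c' > 0$. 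The main obstacle I anticipate is making the $\star$-to-$L^2$ comparison fully rigorous and uniform over the hypothesis family: one must control $\|f \star \mu^\omega_t\|$ for $f$ a high-frequency wavelet packet, using only the (one-sided) Gaussian-type bounds on $\mu^\omega_t$ and $\widehat{\mu}^\omega_t$ that are guaranteed uniformly across all admissible $\omega$, and one must ensure the densities $\mu^\omega_t$ stay comparable to a fixed Gaussian envelope — this is where the specific polynomial-plus-compact structure of $W$ from \cite{BPP22}, rather than merely Assumption~\ref{assmu0}, is essential, and where the constant $4J$ in the exponent is pinned down.
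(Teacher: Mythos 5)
There is a genuine gap in the central step. The theorem's exponent $\alpha/(4J)$ comes from a \emph{spatial} mechanism, not the frequency-domain one you propose. In the paper's proof the perturbation $f=\rho\sum_{|n|,|k|=M}^{2M}\psi_{l_n,k}$ is pushed into the far tail of the invariant density: its support sits at distance of order $M^2$ from the origin, where $\mu_i(x)\lesssim\exp(-a_Jx^{2J})$, so that $f\star\mu_0$ and $\mu_1$ have essentially disjoint effective supports and $\int(f\star\mu_0)^2\mu_1\lesssim\exp(-cM^{4J})$. Setting this equal to $1/N$ forces $M\asymp(\log N)^{1/(4J)}$, and the $L^2$-separation $\rho^2M^2=M^{-\alpha}$ then delivers $(\log N)^{-\alpha/(4J)}$; note that $4J=(2)\cdot(2J)$ arises as (spatial location $M^2$) raised to the (tail exponent $2J$). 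This is also why the class $\mathcal F^\alpha_K$ is defined through decay in the \emph{translation} index $k$ (the sum over $|k|>m$), not the resolution index $n$: the ``cost'' of a hypothesis is how far out in space it lives.

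Your mechanism --- that convolution with $\mu_t$ kills high-\emph{resolution} wavelet content at rate $e^{-c2^{2n}}$ via the decay of $\widehat\mu_t$ --- does not work here, for three reasons. First, for a compactly supported $C^2$ mother wavelet, $\widehat{\psi_{n,k}}$ is not band-limited; its low-frequency leakage ($|\widehat\psi(v)|\sim|v|^{r}$ near $0$ for $r$ vanishing moments) makes $\|\psi_{n,k}\star\mu\|_{L^2}^2$ decay only \emph{polynomially} in $2^{n}$, not like $e^{-c2^{2n}}$. Second, even granting your claimed bound, your own bookkeeping is inconsistent: with $|\mathcal K|\asymp m\asymp 2^{n}$ and $2^{n}\asymp(\log N)^{1/2}$ the separation is $m^{-\alpha}=(\log N)^{-\alpha/2}$, which for $J\ge1$ is \emph{smaller} than $(\log N)^{-\alpha/(4J)}$, hence proves a strictly weaker lower bound than the theorem claims; no consistent choice of parameters in your scheme produces the factor $4J$. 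Third, the class constraint $\sum_n\sum_{|k|>m}\langle\beta',\psi_{n,k}\rangle^2\le Km^{-\alpha}$ does not penalize high resolution at all, so a purely high-frequency perturbation is not the extremal object for this class. The Fano/Varshamov--Gilbert scaffolding is fine but unnecessary (two hypotheses suffice, since the rate is set by the KL-versus-separation tradeoff rather than by entropy); the missing idea is the tail-localization of the perturbation.
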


\begin{proof}
We will use the two hypotheses method described in \cite[Theorem 2.2]{T09}. We will find two hypotheses of interaction forces $\varphi_0,\varphi_1\in \mathcal F^\alpha_K$ such that, for some constant $c>0$,
	\begin{equation*}
		\norm{\varphi_0-\varphi_1}_{L^2(\R)}^2=c\left(\log N\right)^{-\frac \alpha{4J}}\quad\text{and}\quad\text{KL}\left(\P^{\bigotimes N}_{\varphi_0}, \P^{\bigotimes N}_{\varphi_1}\right)\lesssim 1.
	\end{equation*}
	The Radon-Nikodym density of $\d \P_{\varphi_1}/\d \P_{\varphi_0}$ can be computed using Girsanov's theorem: denoting the stationary marginals as $\mu_{i}$, $i=0,1$, we have for $X\in C([0,T];\R)$
	\begin{align*}
		\log\left(\frac{\d\P_{\varphi_1}}{\d\P_{\varphi_0}}(X)\right)&=\int_0^T \varphi_1\star\mu_1(X_t)-\varphi_0\star\mu_0(X_t)\d X_t
  \\
  &-\frac12\int_0^T\varphi_1\star\mu_1(X_t)^2-\varphi_0\star\mu_0(X_t)^2\d t,
	\end{align*}
	such that
	\begin{align*}
		\text{KL}\left(\P_{\varphi_0}, \P_{\varphi_1}\right)&=	\int_\R\log\left(\frac{\d\P_{\varphi_1}}{\d\P_{\varphi_0}}(x)\right)\mu_1(x)\d x
  \\
  &=\frac {T}2\int_\R\left(\varphi_1\star\mu_1(x)-\varphi_0\star\mu_0(x)\right)^2\mu_1(x)\d x.
	\end{align*}
	In a similar manner as in \cite[Theorem 5.1]{BPP22}, we can show that
	\begin{equation*}
		\text{KL}\left(\P_{\varphi_1},\P_{\varphi_0}\right) \le \gamma^2 \text{KL}\left(\P_{\varphi_1},\P_{\varphi_0}\right) + \frac{T}{2} \int_\R (\beta^\prime_0-\beta^\prime_1)\star\mu_0(x)^2\mu_1(x)\d x,
	\end{equation*}
    with $\gamma \in (0,1)$. So, it remains to show the estimate
	\begin{equation}
		\int_\R (\beta^\prime_0-\beta^\prime_1)\star\mu_0(x)^2\mu_1(x)\d x\lesssim\frac1N.\label{eq:twohyp}
	\end{equation}
	Now we will choose the two hypotheses. Let $\rho>0$, $M\in\N$ be rescaling parameters to be determined later. For $n\in\N$, we write $l_n:=\lfloor \log_2n\rfloor$ and define
	\begin{equation*}
		f=\rho\sum_{\abs n,\abs k=M}^{2M}\psi_{l_n,k}.		
	\end{equation*}
	Note that our choice of basis elements implies that $\operatorname{supp}f\subset[-2M^2,2M^2]$ and $\norm f_{L^2(\R)}^2=\rho^2M^2$. We will consider the hypotheses $\varphi_0\in \mathcal F^\alpha_K$ and $\varphi_1=\varphi_0+f$. In order for $\varphi_1\in \mathcal F^\alpha_K$ to hold, we need $d(f,S_m)\le Km^{-\alpha}$ for all $m\in\N$. There are two cases to consider for this, namely
	\begin{equation*}
		\rho^2\sum_{n_1=1}^\infty\sum_{\abs {k_1}>m}\sum_{\abs{n_2},\abs{k_2}=M}^{2M}\sp{\psi_{n_1,k_1}}{\psi_{{l_{n_2}},k_2}}_{2}^2
		\begin{cases}
			\lesssim\norm f_{2}^2=\rho^2M^2, & m\le l_{2M},\\
			=0, & m>l_{2M}.
		\end{cases}
	\end{equation*}
	Hence, we require the condition $\rho^2M^2\lesssim \lfloor \log_2(2M)\rfloor^{-\alpha}$. We enforce a stricter condition
	\begin{equation*}
		\rho^2=O(M^{-\alpha-2}).
	\end{equation*}
	This means we choose
	\begin{equation*}
		\norm{\varphi_0-\varphi_1}_{L^2(\R)}^2=\norm{f}_{L^2(\R)}^2=\rho^2M^2=M^{-\alpha}.
	\end{equation*}
	We will now verify that \cref{eq:twohyp} holds. The main ingredient is the observation that, for $i=0,1$,
	\begin{equation}
		\mu_i(x)\lesssim \exp\left(-a_Jx^{2J}\right),\label{mutail}
	\end{equation}
	which in particular implies $\|\mu_i\|_{L^2(\R)}<\infty$. By symmetry of $\mu_i$ and $f$, it suffices to show
	\begin{equation}
		\int_0^\infty f\star\mu_0(x)^2\mu_1(x)\d x\lesssim\frac1N.\label{KL}
	\end{equation}
	Let us split the integral at some $k>0$ which we determine later. The tail behaviour is governed by \cref{mutail}: Using Cauchy-Schwarz, we have $\|f\star\mu_0\|_\infty^2\le\|f\|_{L^2(\R)}^2\|\mu_0\|_{L^2(\R)}^2\lesssim\rho^2M^2$, such that
	\begin{equation*}
		\int_k^\infty f\star\mu_0(x)^2\mu_1(x)\d x\lesssim\rho^2M^2\frac{\exp\left(-k^{2J}\right)}{k^{2J-1}}.
	\end{equation*}
	For the remaining integral, we bound $\mu_1$ such that
	\begin{equation*}
		\int_0^kf\star\mu_0(x)^2\mu_1(x)\lesssim k\sup_{x\in[0,k]}f\star\mu_0(x)^2.
	\end{equation*}
	We recall that $\operatorname{supp}f=[-2M^2,2M^2]$, which implies
	\begin{align*}
		\sup_{x\in[0,k]}\int_{-\infty}^{x-2M^2}f(x-y)\mu_0(y)\d y 
  &\le \norm{f}_{L^2(\R)}^2\int_{-\infty}^{k-2M^2}\mu_0(y)^2\d y
  \\
  &\lesssim \rho^2M^2\frac{\exp\left(-(2M^2-k)^{2J}\right)}{(2M^2-k)^{2J-1}}.
	\end{align*}
	From this, we see that by choosing $k=M^2$, the entire integral is indeed bounded by
	\begin{equation*}
		\int_0^\infty f\star\mu_0(x)^2\mu_1(x)\d x\lesssim \exp\left(-M^{4J}\right),
	\end{equation*}
	such that choosing $M=c\left(\log N\right)^{\frac 1{4J}}$ with some constant $c>0$ gives \cref{KL}, which in turn gives us the rate
	\begin{equation*}
		\norm{\varphi_0-\varphi_1}_{L^2(\R)}^2=CM^{-\alpha}=C\left(\log N\right)^{-\frac \alpha{4J}},
	\end{equation*}
	which implies the claim of the theorem.
\end{proof}

\section{Proofs} \label{sec5.proofs}

\noindent
This section is devoted to proofs of the main results. We will 
often transfer probabilistic results from i.i.d.~ observations
of the McKean-Vlasov SDE displayed in \cref{eq:McKeanEq}
to observations of the original particle system at \cref{model} via a change of measure device established in \cite{DM22a}. For this purpose we consider two $N$-dimensional processes given by the weak solutions of
\[
\begin{array}{cc}
d\overline{X}_{t}^{i} & =(\varphi\star\mu_{t})(\overline{X}_{t}^{i})+\sigma dW_{t}^{i}\end{array},\quad1\leq i\leq N,
\] 
\[
\begin{array}{cc}
dX_{t}^{i} & =(\varphi\star\mu_{t}^{N})(X_{t}^{i})+\sigma dW_{t}^{i}\end{array},\quad1\leq i\leq N
\]
having the same initial value, and denote by $\overline{\P}^{N}$ and $\P^{N}$, respectively, the associated probability measures.

\subsection{Proof of Theorem \ref{Th2.3}} \label{secPr2.3}
In the first part of the proof we consider the setting of i.i.d.~observations drawn from the mean field limit 
\cref{eq:McKeanEq}, that is we work under 
$\overline{\P}^{N}$. 
We recall the inequality 
\begin{multline}
\|\vp_N -\vp\|_{\star}^2 \leq \|\vp_{\star} -\vp\|_{\star}^2 
+ 2\left(\sup_{f\in S_N} \left| \|f-\vp\|_{N}^2 - \|f -\vp\|_{\star}^2  \right| +  \sup_{f\in S_N} 
|\ga'_N(f)| \right),
\end{multline}
where $\vp_{\star}$ has been introduced in \cref{defphiN}.
Next, we set  $\ga'_N(f)=:2M_N(f)/\sqrt{NT}$ where for all $f\in S$, \(M_N(f)\) is a martingale with quadratic variation 
\[
\frac{\sigma^2}{NT} \sum_{i=1}^N \int_0^T [(f\star \mu_t^N)(X_t^{i,N})]^2 dt \leq \sigma^2 \|f\|^2_{\infty}.
\]
Hence, we deduce by the Bernstein inequality for continuous martingales, 
\[
\overline{\P}^N(|M_N(f) - M_N(g)|\geq a) \leq 2 \exp\left(-a^2/(\sigma^2\|f-g\|^2_{\infty})\right)
\]
for any \(a>0\) and \(f,g \in S_N\). Since for all $q\ge 1$, the $L^q(\Omega)$-norm is dominated by $\norm\cdot_{\psi_{e,2}}$, an application of the general maximal inequality \cite[Theorem~8.4]{KO8} implies
\begin{align} \label{ineqa1}
\Bigl\|\sup_{f\in S_N}\left|M_N(f) -M_N(f_0)\right|\Bigr\|_{L^2(\Omega)}\lesssim \mathsf{DI}(S_N,\|\cdot\|_\infty,\psi_{e,2})
\end{align}
for any $f_0\in S_N$. Consider now the difference 
$ \|g\|_{N}^2 - \|g\|_{\star}^2$ for a function $g\in S_N(2K_\vphi)$. We obtain the decomposition
\begin{align} \label{Ustati}
\|g\|_{N}^2 - \|g\|_{\star}^2 = U_{1,N}(g) + U_{2,N}(g) +U_{3,N}(g) + O(N^{-1})
\end{align}
where \(\De_t(x, y):=g(x -y) - (g\star \mu_t )(x),\) and
\begin{align*}
U_{1,N}(g) &:= \frac{2}{T} \int_0^T \int_{\R} \left(\frac 1N \sum_{j=1}^N \De_t(x, X_t^{j,N})\right)
(g\star \mu_t)(x) \mu_t(x) dx dt \\[1.5 ex]
&+\left( \frac{1}{NT} \sum_{i=1}^N \int_0^T (g\star \mu_t)(X_t^{i,N})^2 dt - \frac{1}{T} \int_0^T \int_{\R}
(g\star \mu_t)(x)^2 \mu_t(x)dxdt \right), 
\end{align*}
\begin{align*}
U_{2,N}(g) &:= \frac{1}{N(N-1)(N-2)} \sum_{i=1}^N  \sum_{j,k\not=i, j\not=k} \frac 1T \int_0^T
\int_{\R} \De_t(x, X_t^{j,N}) \De_t(x, X_t^{k,N}) \mu_t(x) dx dt \\[1.5 ex]
&+ \frac{2}{NT} \sum_{i=1}^N \int_0^T \frac{1}{N-1} \sum_{j\not = i} \De_t(X_t^{i,N},X_t^{j,N})
(g\star \mu_t)(X_t^{i,N}) dt \\[1.5 ex]
&- \frac{2}{NT} \sum_{i=1}^N \int_0^T \frac{1}{N-1} \sum_{j\not = i}\int_{\R} \De_t(x,X_t^{j,N})
(g\star \mu_t)(x) \mu_t(x)\, dx\, dt,  
\end{align*}
\begin{align*}
U_{3,N}(g) &:= \frac{1}{N(N-1)(N-2)} \sum_{i=1}^N  \sum_{j,k\not=i, j\not=k} \frac 1T \int_0^T
\De_t(X_t^{i,N},X_t^{j,N}) \De_t(X_t^{i,N},X_t^{k,N})\, dt \\[1.5 ex]
&-\frac{1}{N(N-1)(N-2)} \sum_{i=1}^N  \sum_{j,k\not=i, j\not=k} \frac 1T \int_0^T
\int_{\R} \De_t(x,X_t^{j,N}) \De_t(x,X_t^{k,N}) \mu_t(x)\, dx \, dt.
\end{align*}
The key observation is that each $U_{r,N}$ is a degenerate $U$-statistics of order $r$ for $r=1,2,3$, while
$U_{1,N}$ is the leading term. 
First,  turn to \(U_{1,N}\) and note that 
\begin{eqnarray*}
U_{1,N}(g)=\frac{1}{N}\sum_{j=1}^N \left(Z_j(g)-\E_{\overline{\P}^N} Z_j(g)\right)
\end{eqnarray*}
with 
\begin{eqnarray*}
Z_j(g)=\frac{1}{T} \int_0^T \left(2\int_{\R} \De_t(x, X_t^{j,N})
(g\star \mu_t)(x) \mu_t(x)\, dx\right) + (g\star \mu_t)(X_t^{i,N})^2\, dt.
\end{eqnarray*}
It holds that
\[
|Z_j(g)|\leq 8K_\vphi^2,\quad |Z_j(g)-Z_j(g')|\leq 4K_\vphi\|g-g'\|_\infty, 
\]
with probability \(1\) for \(g,g'\in S_N(2K_\vphi).\)  Hence,
\[
\|Z_j(g)-Z_j(g')\|_{\psi_{e,2}}\lesssim K_\vphi\|g-g'\|_\infty.
\]
This implies that the process \(\widetilde U_{1,N}=\sqrt{N} U_{1,N}\) has subgaussian increments  and  
\begin{eqnarray*}
\|\widetilde U_{1,N}(g)-\widetilde U_{1,N}(g')\|_{\psi_{e,2}}\lesssim K_\vphi\|g-g'\|_\infty.
\end{eqnarray*}
 Fix
some $g_0\in S_N(2K_\vphi)$. By the triangle inequality,
\[
\sup_{g\in S_N(2K_\vphi)}|\widetilde U_{1,N}(g)|\le\sup_{g,g'\in S_N(2K_\vphi)}| \widetilde U_{1,N}(g)-\widetilde U_{1,N}(g')|+|\widetilde U_{1,N}(g_0)|.
\]
By the Dudley integral inequality,  see  \cite[Theorem
8.1.6]{Vershynin}, for any $\delta\in(0,1)$, we conclude that 
\[
\sup_{g,g'\in S_N(2K_\vphi)}|\widetilde U_{1,N}(g)-\widetilde U_{1,N}(g')|\lesssim K_\vphi\bigl[\mathsf{DI}(S_N,\|\cdot\|_\infty,\psi_{e,2}) +\sqrt{\log(2/\delta)}\bigr]
\]
holds with probability at least $1-\delta$.  Applying Hoeffding's inequality,
see e.g. \cite[Theorem 2.6.2.]{Vershynin}, we have for any
$\delta\in(0,1)$,
\[
|\widetilde U_{1,N}(g_0)|\lesssim K_\vphi\sqrt{\log(1/\delta)},
\]
with probability at least $1-\delta$. In other words, for any $u>0$, 
\begin{equation*}
    \overline{\mathbb P}^N \left( \sup_{g\in S_N(2K_\vphi)}|\widetilde U_{1,N}(g)| > u \right) \le \exp\left( -\frac{(u-K_\varphi \mathsf{DI}(S_N(2K_\vphi),\|\cdot\|_\infty,\psi_{e,2}))^2}{2 K_\varphi^2} \right).
\end{equation*}
Putting things together we conclude that
\begin{align} \label{ineqa2}
\Bigl\{\E_{\overline{\P}^N}\Bigl[\sup_{g\in S_N(2K_\vphi)}|U_{1,N}(g)|^q\Bigr]\Bigr\}^{1/q} \lesssim \frac{q K_\vphi \, \mathsf{DI}(S_N(2K_\vphi),\|\cdot\|_\infty,\psi_{e,2}) }{\sqrt{N}}.
\end{align}
Furthermore, due to Theorem~\ref{thm:ustat-max},
\begin{align} \label{ineqa3}
 \Bigl\{\E_{\overline{\P}^N}\Bigl[\sup_{g\in S_N(2K_\vphi)}\left|NU_{2,N}(g)\right|^{q}\Bigr]\Bigr\}^{1/q}  \lesssim p^2 K_\vphi\,  \mathsf{DI}(S_N(2K_\vphi),\|\cdot\|_\infty,\psi_{p/2})
\end{align}
and
\begin{align} \label{ineqa44}
 \Bigl\{\E_{\overline{\P}^N}\Bigl[\sup_{g\in S_N(2K_\vphi)}\left|N^{3/2}U_{3,N}(g)\right|^{q}\Bigr]\Bigr\}^{1/q}  \lesssim p^3 K_\vphi\,  \mathsf{DI}(S_N(2K_\vphi),\|\cdot\|_\infty,\psi_{p/3})
\end{align}
for any \(p>2\) and \(q\leq p.\) This implies the statement of Theorem \ref{Th2.3}
in the setting of i.i.d.~observations drawn from the McKean-Vlasov SDE
\cref{eq:McKeanEq}. 

Now, we will transfer the result from i.i.d.~observations to observations of the original particle system at \cref{model} via a change of measure device established in \cite{DM22a}. 
Recalling the definition of probability measures $\overline{\P}^{N}$ and $\P^{N}$, we deduce the identity
\[
\frac{\d\P^{N}}{\d\overline{\P}^{N}}=\mathcal{E}_{T}(\overline{M}^{N})
\]
where $\mathcal{E}_{t}(\overline{M}^{N})=\exp\bigl(\overline{M}_{t}^{N}-\frac{1}{2}\langle \overline{M}^{N}\rangle _{t}\bigr)$
and 
\[
\overline{M}_{t}=\sum_{i=1}^{N}\int_{0}^{t}\sigma^{-1}\left[\varphi\star(\mu_{t}^{N}-\mu_{t})(\overline X_{t}^{i})\right]\,dW_{t}^{i}
\]
is a $\overline{\P}^{N}$-local martingale. According to  \cite[Proposition~19]{DM22a}, we obtain the bound
\begin{equation}
\sup_{N\geq1}\sup_{t\in[0,T-\delta]}\mathbb{E}_{\overline{\P}^{N}}\left[\exp\left(\tau\left(\langle \overline{M}^{N}\rangle _{t+\delta}-\langle \overline{M}^{N}\rangle _{t}\right)\right)\right]\leq C_{\delta,\tau}\label{eq:exp-M}
\end{equation}
for every \(\tau>0,\) $0\leq\delta\leq\delta_{0}$ and some constant \(C_{\delta,\tau}>0\). Let $\xi$ be any $\mathcal{F}_{T}$-measurable
random variable. Then by the H\"older inequality
\begin{align} \label{changeofmeasu}
\mathbb{E}_{\P^{N}}[|\xi|^{r}]=\mathbb{E}_{\overline{\P}^{N}}[|\xi|^{r}\mathcal{E}_{T}(\overline{M}^{N})]\leq\left\{ \mathbb{E}_{\overline{\P}^{N}}[|\xi|^{2r}]\right\} ^{1/2}\left\{ \mathbb{E}_{\overline{\P}^{N}}[\mathcal{E}_{T}^{2}(\overline{M}^{N})]\right\} ^{1/2}.
\end{align}
Next, fix a grid \(0=t_0<t_1<\ldots<t_K=T\) with \(|t_{k+1}-t_k|\leq \delta \) and decompose 
\begin{eqnarray*}
\mathbb{E}_{\overline{\P}^{N}}[\mathcal{E}_{T}^{2}(\overline{M}^{N})]&=&\mathbb{E}_{\overline{\P}^{N}}\prod_{k=1}^K \exp\Bigl(2(\overline{M}_{t_k}^{N}-\overline{M}_{t_{k-1}}^{N})+\langle \overline{M}^{N}\rangle _{t_{k-1}}-\langle \overline{M}^{N}\rangle _{t_k}\Bigr)
\\
&=&\mathbb{E}_{\overline{\P}^{N}}\prod_{k=1}^K \mathcal{E}_{t_k}(\overline{M}_{t_k}^{N}-\overline{M}_{t_{k-1}}^{N})\exp\Bigl(\langle\overline{M}_{t_k}^{N}-\overline{M}_{t_{k-1}}^{N}\rangle
\\
&+&\langle \overline{M}^{N}\rangle _{t_{k-1}}-\langle \overline{M}^{N}\rangle _{t_k}\Bigr)
\end{eqnarray*}
By using martingale property of \(\mathcal{E}_{t_k},\) repeatedly applying the H\"older inequality and
using  (\ref{eq:exp-M}), we deduce the inequality 
\[
\mathbb{E}_{\overline{\P}^{N}}[\mathcal{E}_{T}^{2}(\overline{M}^{N})]\leq C_{\delta,T}.
\]
As a consequence the bounds \cref{ineqa1}-\cref{ineqa44} remain valid under the probability measure $\P^N$. This completes the proof.

\subsection{Proof of Theorem \ref{approxerr}}

\noindent 
Note that $\alpha_{n_{N}-1}< A_N\leq \alpha_{n_{N}}$ and hence $A_N/\alpha_{n_{N}}\to 1$ as $N\to \infty.$  Under Assumption \ref{assphi} considered for the sequence $\alpha_{n_{N}}$, we  obtain the inequality
\begin{align*}
\inf_{g\in S_N}& \frac1T \int_0^T\int_{\R}\left((\varphi\star\mu_{t})(x)-(g\star\mu_{t})(x)\right)^{2}\mu_{t}(x)\,dx\,dt\\
&\leq \frac{1}{\alpha_{n_{N}}T} \int_0^T\|\mu_t\|_{[-\alpha_{n_{N}},\alpha_{n_{N}}]}\int_{-\alpha_{n_{N}}}^{\alpha_{n_{N}}}\left|\sum_{k=D_N+1}^{\infty}c_{k}\exp(i\pi kx/\alpha_{n_{N}})\widehat{\mu}_{t}(-k/\alpha_{n_{N}})\right|^{2}\,dx\,dt  +R(\alpha_{n_{N}})\\
&\leq \frac{1}{\alpha_{n_{N}}T} \int_0^T \|\mu_t\|_{[-\alpha_{n_{N}},\alpha_{n_{N}}]} \sum_{k=D_N+1}^{\infty}\left(|c_{k}||\widehat{\mu}_{t}(-k/\alpha_{n_{N}})|\right)^{2}\,dt  +R(\alpha_{n_{N}})
\end{align*}
where 
\begin{align*}
R(A)  
  \leq \frac{4K_\vphi^{2}}T\int_0^T\int_{|x|>A}\mu_{t}(x)\,dx\,dt.
\end{align*}
In the next step we would like to obtain estimates for the marginal
densities $\mu_t$. For this purpose
we consider the  SDE
\begin{eqnarray}
\label{eq:sde-frozen}
d\widetilde X_t =  (\vphi \star \mu_t )(\widetilde X_t)\, dt + \sigma dB_t,\quad \widetilde X_t\sim \mu_0
\end{eqnarray}
which is obtained by freezing the densities \(\mu_t\) in the McKean-Vlasov equation \cref{eq:McKeanEq}.
Note that, under Assumption~\ref{assLip}, \eqref{eq:sde-frozen} is the standard SDE with uniformly bounded (by \(K_\varphi\)) and uniformly Lipschitz continuous drift function \(\vphi \star \mu_t.\) By applying \cite[Theorem~3.1]{TA20} to \eqref{eq:McKeanEq} and using our Assumption \ref{assmu0}, we deduce that

\begin{align*}
    \mu_t(x) & \le \frac{1}{\sqrt{2\pi t}} \int_\R \mu_0(y) \int_{\frac{\abs{x-y}}{\sqrt t}}^\infty z \exp\left(-\frac{(z-K_\varphi \sqrt t)^2}{2}\right) dz dy\\
    & \le \frac{\exp\left(K_\varphi^2 T/2\right)}{2\pi\sqrt{t\zeta^2}} \int_\R \exp\left(-\frac{y^2}{2\zeta^2}\right) \exp\left(-\frac{(x-y)^2}{2t}\right) dy\\
             & = \frac{\exp\left(K_\varphi^2 T/2\right)}{\sqrt{2\pi (\zeta^2 + t)}} \exp\left(-\frac{x^2}{2(\zeta^2 + t)}\right).
\end{align*}
Therefore, we get
\begin{align*}
    R(A) \le 8 K_\varphi^2 \exp\left(K_\varphi^2 T/2\right) \exp\left(-\frac{A^2}{2(\zeta^2+T)}\right).
\end{align*}
Next, note that the equation \eqref{eq:McKeanEq} can be written as
\begin{eqnarray*}
X_t =X_0+ \int_0^t (\vphi \star \mu_s )(X_s)\, ds + \sigma B_t
\end{eqnarray*}
meaning that \(\widehat{\mu}_{t}(u)=\widehat{\mu}_{0}(u)\psi_t(u)\) where \(\psi_t\) is the characteristic function of  the random variable \(\int_0^t (\vphi \star \mu_s )(X_s)\, ds + \sigma B_t.\) Thus, under Assumption~\ref{assmu0}, we conclude that
\begin{align} \label{muhatbounded}
|\widehat{\mu}_{t}(u)|\leq \exp(-u^2\zeta^2/2).
\end{align}
Furthermore, by \cite[Remark 3.3]{TA20}, \(\|\mu_t\|_{\infty}\leq 2\|\mu_0\|_{\infty}+K_{\varphi}.\) As a consequence we derive

\begin{align*}
    \inf_{g\in S_N} \norm{g-\varphi}_\star^2 & \lesssim \frac{K_{\varphi}}{A_N}\sum_{k=D_N+1}^{\infty}|c_{k}|^2 \exp\left(-\frac{k^2\zeta^2}{A_N^2}\right)\\
    & + \exp\left(K_\varphi^2 T/2\right) \exp\left(-\frac{A_N^2}{2(\zeta^2+T)}\right)
\end{align*} 
Hence under our choice of $A_N$ and $D_N$ we obtain that 
\begin{eqnarray*}
\inf_{g\in S_N} \|g-\vp\|_{\star}^2\lesssim \frac{\exp(K_\vphi^2 T/2)}{N}.
\end{eqnarray*}
This completes the proof of Theorem \ref{approxerr}. 

\subsection{Proof of Proposition \ref{prop1}}
First, we present some preliminary results.

\subsubsection{Some properties of $\Psi$}
We begin by pointing out that $\Psi$ is invertible. Indeed, this immediately follows from the representation 
$\Psi=(\lan e_i, e_j\ran_{\star})_{1\leq i,j\leq D_N}$, since $(e_j)_{1\leq j\leq D_N}$ is an orthonormal system.
The invertibility of the matrix $\Psi$ implies that we can transform $(e_j)_{1\leq j\leq D_N}$ into an orthonormal system with respect to $\lan \cdot,\cdot \ran_{\star}$.
\begin{lem}
	\label{ONBchange}
	The collection $\bar e=(\bar e_1,\dots,\bar e_{D_N})^\top $ given by 
	\begin{equation}
		\bar e=\Psi^{-1/2}e,	
		\end{equation}
	where $e=(e_1,\dots, e_{D_N})^\top$, is an orthonormal system with respect to $\lan \cdot,\cdot \ran_{\star}$.
\end{lem}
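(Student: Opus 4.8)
The statement is a purely linear-algebraic fact about the change of basis induced by $\Psi^{-1/2}$, so the plan is to reduce it to a direct computation in coordinates. First I would record that $\Psi$ is a symmetric positive semidefinite matrix (as the Gram matrix of $(e_j)_{1\leq j\leq D_N}$ with respect to the symmetric positive semidefinite bilinear form $\lan \cdot,\cdot\ran_{\star}$) which, as already noted in the excerpt, is invertible. Hence $\Psi$ is in fact positive definite, its symmetric square root $\Psi^{1/2}$ is well defined and positive definite, and therefore $\Psi^{-1/2}$ is a well-defined symmetric positive definite matrix with $\Psi^{-1/2}\Psi\Psi^{-1/2}=I_{D_N}$.

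Next I would exploit the bilinearity of $\lan\cdot,\cdot\ran_{\star}$ to expand, for $1\le i,j\le D_N$,
\[
\lan \bar e_i,\bar e_j\ran_{\star}=\Big\lan \sum_{k=1}^{D_N}(\Psi^{-1/2})_{ik}e_k,\ \sum_{l=1}^{D_N}(\Psi^{-1/2})_{jl}e_l\Big\ran_{\star}
=\sum_{k,l=1}^{D_N}(\Psi^{-1/2})_{ik}(\Psi^{-1/2})_{jl}\,\Psi_{kl},
\]
using $\lan e_k,e_l\ran_{\star}=\Psi_{kl}$. Using the symmetry of $\Psi^{-1/2}$ this is exactly $(\Psi^{-1/2}\Psi\Psi^{-1/2})_{ij}$, which equals $\delta_{ij}$ by the identity above. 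This shows that $(\bar e_1,\dots,\bar e_{D_N})$ is orthonormal with respect to $\lan\cdot,\cdot\ran_{\star}$, which is the claim.

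There is essentially no hard step here; the only point requiring a word of justification is that $\lan\cdot,\cdot\ran_{\star}$ restricted to $S_N$ is a genuine inner product, i.e. that $\Psi$ is not merely positive semidefinite but strictly positive definite — this is precisely the content of the invertibility of $\Psi$ recalled just before the lemma, so I would simply cite that. Everything else (existence and symmetry of $\Psi^{-1/2}$, the cancellation $\Psi^{-1/2}\Psi\Psi^{-1/2}=I$) is standard spectral theory for symmetric positive definite matrices.
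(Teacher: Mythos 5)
Your proposal is correct and follows essentially the same route as the paper: expand $\lan \bar e_i,\bar e_j\ran_{\star}$ by bilinearity to obtain $(\Psi^{-1/2}\Psi\Psi^{-1/2})_{ij}=\delta_{ij}$. You merely spell out the preliminary facts (positive definiteness of $\Psi$ and the existence/symmetry of $\Psi^{-1/2}$) that the paper leaves implicit.
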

\begin{proof}
	Denoting the Kronecker delta as $\delta_{ij}$, we have
	\begin{equation}
		\sp{\bar e_i}{\bar e_j}_{\star}=\sp{(\Psi^{-1/2}e)_i}{(\Psi^{-1/2}e)_j}_{\star}=\left(\Psi^{-1/2}\Psi \Psi^{-1/2}\right)_{ij}=\delta_{ij}.
		\label{ONBstar}
	\end{equation}
\end{proof}

\subsubsection{Matrix concentration inequality for independent observations}
First of all, we recall a result from \cite[Corollary 6.1]{PMT14}, which provides a concentration inequality for operator norms of self-adjoint matrices.

\begin{theo}(\cite[Corollary 6.1]{PMT14}) \label{tropp}
    Let $(X_i)_{1\leq i\leq N}$ be a sequence of independent random variables taking values in a Polish space
    $\mathcal X$, and 
    let $H$ be a function
    from $\mathcal X^N$ into the space of $m\times m$ real self-adjoint matrices $\mathbb H^m$. Assume that   
    there exists a sequence $(A_i)_{1\leq i\leq N}\subset \mathbb H^m$ that satisfies	
        \bee
        A_i^2 \succcurlyeq \left( H(x_1,\ldots,x_{l-1},x_l,x_{l+1}\ldots,x_N) -
        H(x_1,\ldots,x_{l-1},x'_l,x_{l+1}\ldots,x_N)  \right)^2
        \eee
    for each index $1\le i\le N$, where $x_l$ and $x'_l$ range over all possible values of $X_l$.  Define 
    $\nu^2=\norm{\sum_{i=1}^N A_i^2}_{\text{\rm op}}$. Then it holds that 
        \bee
        \P\left( \norm{H(X_1,\ldots,X_N) - \E{H(X_1,\ldots,X_N)}}_{\text{\rm op}} > t \right) 
        \leq m\exp\left( - \frac{t}{2\nu^2}\right).
        \eee
\end{theo}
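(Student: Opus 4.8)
The inequality is the matrix analogue of McDiarmid's bounded-differences inequality, and the plan is to follow the matrix Laplace-transform machinery of Tropp together with the matrix Efron--Stein circle of ideas behind \cite[Corollary 6.1]{PMT14}. First I would reduce the two-sided operator-norm deviation to a one-sided eigenvalue deviation: since $\|B\|_{\text{\rm op}}=\max\{\lambda_{\max}(B),\lambda_{\max}(-B)\}$ for a self-adjoint matrix $B$, it suffices to bound $\P(\lambda_{\max}(H-\E H)>t)$, the estimate for $-(H-\E H)$ being identical and the two combined by a union bound that costs at most a harmless adjustment of constants. Markov's inequality applied to the trace exponential then gives, for every $\theta>0$,
\[
\P\bigl(\lambda_{\max}(H-\E H)>t\bigr)\leq e^{-\theta t}\,\E\operatorname{tr}\exp\bigl(\theta(H-\E H)\bigr),
\]
so the task reduces to bounding the trace moment generating function on the right.

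Next I would introduce a Doob-martingale decomposition along the coordinates. Write $\E_k[\,\cdot\,]:=\E[\,\cdot\mid X_1,\dots,X_k]$ and $D_k:=\E_k H-\E_{k-1}H$, so that $H-\E H=\sum_{k=1}^N D_k$ is a matrix martingale difference sequence. To control the increments I would bring in an independent copy $X_k'$ of $X_k$ and let $H^{(k)}$ be $H$ with $X_k$ replaced by $X_k'$; since $X_k'$ is independent of everything and $H^{(k)}$ does not depend on $X_k$, one checks $\E_k[H^{(k)}]=\E_{k-1}[H]$, hence $D_k=\E_k[H-H^{(k)}]$. The map $B\mapsto B^2$ is operator convex, so the operator Jensen inequality together with the hypothesis $(H-H^{(k)})^2\preccurlyeq A_k^2$ gives
\[
D_k^2=\bigl(\E_k[H-H^{(k)}]\bigr)^2\preccurlyeq\E_k\bigl[(H-H^{(k)})^2\bigr]\preccurlyeq A_k^2 .
\]
In particular the predictable quadratic variation obeys $\sum_{k=1}^N\E_{k-1}[D_k^2]\preccurlyeq\sum_{k=1}^N A_k^2$, whose operator norm is $\nu^2$, and also $\|D_k\|_{\text{\rm op}}\leq\|A_k\|_{\text{\rm op}}\leq\nu$.

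The last step is a matrix martingale concentration estimate. Using subadditivity of matrix cumulant generating functions along the Doob filtration --- the step that ultimately rests on Lieb's concavity theorem, exactly as in Tropp's matrix Freedman and Azuma inequalities --- the per-increment bounds $D_k^2\preccurlyeq A_k^2$ yield
\[
\E\operatorname{tr}\exp\bigl(\theta(H-\E H)\bigr)\leq m\exp\bigl(\psi(\theta)\,\nu^2\bigr)
\]
for an explicit $\psi$ with $\psi(\theta)\sim\theta^2/2$ as $\theta\to0$. Substituting into the Laplace bound from the first step, using $\|D_k\|_{\text{\rm op}}\leq\nu$ to absorb the higher-order terms, and optimizing over $\theta>0$ produces an exponential tail of the asserted form $m\,e^{-t/(2\nu^2)}$ (the precise numerical constant in the exponent depending on which cumulant bound is invoked); adding the two one-sided estimates finishes the proof.

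I expect the matrix-martingale step to be the main obstacle: unlike in the scalar case, conditional moment generating functions do not factorize, and one must invoke the subadditivity of matrix cumulants --- the point where Lieb's theorem is indispensable --- to pass from the per-coordinate bounds $D_k^2\preccurlyeq A_k^2$ to a bound on $\E\operatorname{tr}\exp(\theta\sum_k D_k)$. The identity $D_k=\E_k[H-H^{(k)}]$ and the operator Jensen step $D_k^2\preccurlyeq\E_k[(H-H^{(k)})^2]$ are the other delicate points, requiring one to track carefully the order of conditioning and the role of the independent copy, but they are otherwise routine. A workable alternative that sidesteps Lieb's theorem would be to run a scalar bounded-differences argument along a $\delta$-net of unit vectors and then take a union bound, at the cost of worse prefactors; the cumulant route is cleaner and yields the clean dimension factor $m$ directly.
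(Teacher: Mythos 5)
The paper offers no proof of this statement---it is imported verbatim from \cite{PMT14}---so there is nothing internal to compare against; your sketch must be judged on its own. As such it is essentially correct and follows the classical route to a matrix bounded-differences inequality: the reduction of $\norm{\cdot}_\opr$ to two one-sided $\lambda_{\max}$ bounds, the Laplace/trace-exponential step, the Doob decomposition $D_k=\E_k H-\E_{k-1}H$, the identity $D_k=\E_k[H-H^{(k)}]$ via an independent copy $X_k'$, the operator-Jensen step $D_k^2\preccurlyeq\E_k[(H-H^{(k)})^2]\preccurlyeq A_k^2$ (valid since $x\mapsto x^2$ is operator convex and $\E_k$ is a unital positive map), and finally the matrix Azuma inequality, whose proof rests on Lieb's concavity theorem. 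This is Tropp's original argument and yields a tail of the form $m\exp(-t^2/(8\nu^2))$; the actual proof in \cite{PMT14} instead uses the method of exchangeable pairs (matrix Stein pairs), which is what buys the sharper constant in the exponent. Either route gives the stated form up to the absolute constant, which is all that is used downstream.

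Two small corrections. The exponent must be quadratic in $t$: your own Chernoff optimization $\inf_{\theta>0}\exp(-\theta t+\theta^2\nu^2/2)=\exp(-t^2/(2\nu^2))$ produces $t^2$, not $t$, and the $t$ appearing in the displayed statement is a typo in the paper (the application in the proof of \cref{prop1}, with $t=1/2$ and $\nu^2=36L_N^2\norm{\Psi^{-1}}_\opr^2/N$, is numerically consistent only with the $t^2$ version), so you should not contort the final step to reproduce $\exp(-t/(2\nu^2))$ literally. Also, combining the two one-sided bounds gives the prefactor $2m$ rather than $m$; like the constant in the exponent, this is immaterial for the paper's purposes.
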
 

\noindent
We begin by considering the random matrix $\Psiiid$ which has been constructed from the i.i.d.\ observations
$\BX^{i,N}$, $i=1,\ldots, N$, drawn from the mean field limit 
\cref{eq:McKeanEq} instead of mutually dependent particles (thus, we work under $\overline{\P}^N$). That is, 
    \bee
    (\Psiiid)_{jk} =  \frac{1}{NT} \sum_{i=1}^{N} \int_0^T e_j \star  
    \mu_t^N (\BX_t^{i,N}) e_k \star  \mu_t^N (\BX_t^{i,N}) dt
    \eee
for $1\leq j,k\leq D_N$. As a slight abuse of notation, we denote the corresponding bilinear form as $\sp\cdot\cdot_N$. In what follows, we prove that \cref{prop1} holds for $\Psiiid$ and then transition to the particle system in the next section.
\begin{proof}[Proof of \cref{prop1}] (The independent case)
	(i) Here we would like to
		prove the inequality
		\begin{align} \label{opineaq}
			\prob{\norm{\Psi^{-1/2}\Psiiid\Psi^{-1/2}-\Psi^{-1/2}\E{\Psiiid}\Psi^{-1/2}}_\opr\ge\frac 12}&\le D_N\exp\left(-\frac{c_{\eta,T}\eta NT}{4L_N^2\norm{\Psi^{-1}}_\opr^2}\right).
		\end{align}
		via \cref{tropp}.
		Recall that by \cref{ONBchange} $$\left(\Psi^{-1/2}\Psiiid\Psi^{-1/2}\right)_{jk}=\sp{\bar e_j}{\bar e_k}_{N},$$
		where $\bar e=\Psi^{-1/2} e$. We consider the space $\mathcal X = C([0,T],\R)$ and the map $H$ given by 
		\begin{align*}
			H\colon \left(C([0,T],\R)\right)^N&\to \mathbb H^{D_N},\\
			\left(x_1,\dots,x_N\right)&\mapsto\left(\frac{1}{NT}\sum_{i=1}^{N}\int_0^T \bar e_j\star\delta_{x(t)}(x_i(t))\bar e_k\star\delta_{x(t)}(x_i(t))\d t\right)_{1\le j,k\le D_N}
		\end{align*}
		where $\delta_{x(t)}=\frac1N\sum_{j=1}^N\delta_{x_j(t)}$. We have that
		\begin{align*}
		&\Psi^{-1/2}\Psiiid\Psi^{-1/2}-\Psi^{-1/2}\E_{\overline{\P}^N}\left[\Psiiid\right]\Psi^{-1/2} \\[1.5 ex]
		&= H\left(\BX^{1,N}, \ldots, \BX^{N,N}\right) -\E_{\overline{\P}^N}\left[H\left(\BX^{1,N}, \ldots, \BX^{N,N}\right)\right].
		\end{align*}
		For  $x=\left(x_1,\dots,x_N\right)\in \left(C([0,T],\R)\right)^N$ we denote by $\tilde x$ the same vector where the $l$-th entry is replaced by ${x_l}^\prime$. We need to study $\Delta H_{jk}:= \left(H(x)-H(\tilde x)\right)_{jk}$, which is given by
		\begin{align*}
		\Delta H_{jk} &= \frac{1}{TN^3} \sum_{i,r_1,r_2=1}^N \int_0^T 
		\bar e_j(x_i(t)-x_{r_1}(t))\bar e_k(x_i(t)-x_{r_2}(t)) \\
		& -\bar e_j(\tilde x_i(t)-\tilde x_{r_1}(t))\bar e_k(\tilde x_i(t)-\tilde x_{r_2}(t)) dt.
		\end{align*}
		If either none or all of the indices are equal to $l$, the difference vanishes. Therefore, there are at most $3N(N-1)$ non-zero terms. Consequently, for any $1\leq l \leq N$, we can bound the difference as
		\begin{equation*}
			|\Delta H_{jk}|\le\frac{6}{N}\norm{\bar e_j}_\infty\norm{\bar e_k}_\infty.
		\end{equation*}
		Moving on to the spectral norm of $\Delta H$, we have for any $y\in \R^{D_N}$:
		\begin{align*}
			y^\top\Delta H^2y&=\sum_{j,k=1}^{D_N}y_jy_k\sum_{l=1}^{D_N}\Delta H_{jl}\Delta H_{kl}\le \frac{36}{N^2}\sum_{j,k=1}^{D_N} |y_jy_k|\sum_{l=1}^{D_N}\norm{\bar e_j}_\infty\norm{\bar e_k}_\infty\norm{\bar e_l}_\infty^2\\	
			&\le\frac{36}{N^2}\left(\sum_{j=1}^{D_N}\norm{\bar e_j}_\infty^2\right)^2\norm{y}^2
			\le \frac{36}{N^2}L_N^2\norm{\Psi\inv}_\opr^2\norm y^2.
		\end{align*}
		This means 
		$$A_l^2:=\frac{36}{N^2}L_N^2\norm{\Psi^{-1}}_\opr^2I_{D_N}$$ 
		satisfies the condition $A_l^2\succcurlyeq \Delta H^2$ for all $1\le l\le N$. We also define 
		$\nu^2:=\frac{36}{N}L_N^2\norm{\Psi^{-1}}_\opr^2$. Now we apply the concentration inequality of \cref{tropp} and deduce the statement \cref{opineaq}, which completes this part of the proof. \\ \\
		(ii) To finish the proof of \cref{prop1}(i) in the independent case we observe that
		\begin{align*}
			\norm{\Psi^{-1/2}\Psiiid\Psi^{-1/2}
			-I_{D_N}}_\opr&\le \norm{\Psi^{-1/2}\Psiiid\Psi^{-1/2}-
			\Psi^{-1/2}\E_{\overline{\P}^N}\left[\Psiiid\right]\Psi^{-1/2}}_\opr
			\\& +\norm{\Psi^{-1/2}\E_{\overline{\P}^N}\left[\Psiiid\right]\Psi^{-1/2}-I_{D_N}}_\opr.
		\end{align*}
		Thus we only need to show the convergence
		\bee \label{conve2}
		\norm{\Psi^{-1/2}\E_{\overline{\P}^N}\left[\Psiiid\right]\Psi^{-1/2}-I_{D_N}}_\opr \to 0 \qquad \text{as } 
		N\to \infty.
		\eee
		Since the particles are exchangeable and independent, we have
		\begin{align*}
		\E_{\overline{\P}^N}\left[({\Psiiid})_{jk}\right] &= \frac{1}{TN^3} \sum_{i,r_1,r_2=1}^N \int_0^T 
		\E_{\overline{\P}^N}\left[e_j(\BX_t^{i,N}- \BX_t^{r_1,N} ) e_k(\BX_t^{i,N}- \BX_t^{r_2,N} )\right] dt \\
		& = \frac{1}{TN^2} \int_0^T \left(
		(N-1)^2\E_{\overline{\P}^N}\left[e_j\star\mu_t(\BX^{1,N}_t)e_k\star\mu_t(\BX^{1,N}_t)\right]
		+ e_j(0)e_k(0)\right. 
		\\[1.5 ex]
		&\quad + \left. (N-1)\left(\E_{\overline{\P}^N}\left[e_j\star\mu_t(\BX^{1,N}_t)\right]e_k(0)+\E_{\overline{\P}^N}
        \left[e_k\star
		\mu_t(\BX^{1,N}_t)\right]e_j(0)\right) \right) dt \\
		& = \frac{(N-1)^2}{N^2} \Psi_{jk} + \frac{e_j(0)e_k(0)}{N^2} \\[1.5 ex]
		&\quad + \frac{N-1}{TN^2} \int_0^T 
		\left(\E_{\overline{\P}^N}\left[e_j\star\mu_t(\BX^{1,N}_t)\right]e_k(0)+\E_{\overline{\P}^N}\left[e_k\star
		\mu_t(\BX^{1,N}_t)\right]e_j(0)\right)  dt.
		\end{align*} 
		Hence, we deduce that 
		\bee
		\left|\E_{\overline{\P}^N}\left[({\Psiiid})_{jk}\right]  - \Psi_{jk} \right| \lesssim \frac{1}{N} \left((I_{D_N})_{jk}
		+ \norm{e_j}_\infty\norm{e_k}_\infty \right). 
		\eee
		Now, the spectral norm can be bounded through \cref{assm}:
        \begin{equation}\label{eq: concineq_det}
            \begin{aligned}
    			\norm{I_{D_N}-\Psi^{-1/2}\E{\Psiiid}\Psi^{-1/2}}_\opr
    			&\lesssim \frac{1}{N}\left(1+\norm{\Psi\inv}_\opr\left(\sum_{j,k=1}^{D_N}\norm{e_j}_\infty^2\norm{e_k}_\infty^2\right)^{1/2}\right)\\
    			&\lesssim\frac {L_N\norm{\Psi\inv}_\opr}N\\
    			&\lesssim\frac{1}{\sqrt N\log(N)}.
		  \end{aligned}
        \end{equation}
		This gives the claim \cref{conve2} and thus the validity of 
		\cref{prop1}(i) in the independent case. \\ \\
		(iii) On $\Lambda_{N}^c$, the following inequalities hold:
		\begin{align*}
			c_{\eta,T}\frac{NT}{\log(NT)}&\le L_N^2\left(\norm{\Psi^{-1}}_\opr+\norm{\Psi^{-1}-\Psiiid^{-1}}_\opr\right)^2 \\
  & \le2L_N^2\norm{\Psi^{-1}}_\opr^2+2L_N^2\norm{\Psi^{-1}-\Psiiid^{-1}}_\opr^2.
		\end{align*}
		So, if \cref{assm} holds, we deduce that 
		\begin{equation*}
			\norm{\Psi^{-1}-\Psiiid^{-1}}_\opr^2\ge\frac{c_{\eta,T}}{4L_N^2}\frac{NT}{\log(NT)}\ge \norm{\Psi^{-1}}_\opr^2,
		\end{equation*}
		and \cite[Proposition 4 (ii)]{CG20a} yields the claim.
\end{proof}

\subsubsection{From independent observations to the particle system}
To transfer the results of \cref{prop1} from the independent case to the particle system introduced in
\cref{model}, we use an elegant transformation of measure argument investigated in \cite{DM22a}. We recall the
probability measures $\P^N$ and
$\overline{\P}^N$ that have been introduced in Section \ref{secPr2.3}. According to 
\cite[Theorem 18, eq.\ (35)]{DM22a}, there exists a $k\in\N$ such that
\begin{equation}
	\label{Hoffmann}
	\P^N(A)\le C_\varphi^{k(k+1)/8}\overline{\P}^N(A)^{k/4}\quad \forall A\in\mathcal F_T,
\end{equation}
 where $C_\varphi$ is a constant independent of $N$ and bounded by
\begin{equation}
	C_\varphi\le 1+\sum_{p=1}^{\infty}(\delta a_\varphi)^p\norm\varphi_{\text{Lip}}^{2p},
\end{equation}
where $a_\varphi$ depends exponentially on $\norm\varphi_{\text{Lip}}$ and $\delta$ needs to be chosen. Clearly, 
$$\delta<\left(2a_\varphi\norm\varphi_{\text{Lip}}^{2}\right)\inv$$ 
gives us an upper bound for $C_\varphi$, which is uniform over the class 
$\Phi$. Therefore, we have
\begin{align*}
	\P^N\left(\norm{\Psi^{-1/2}\Psi_{N}\Psi^{-1/2}-I_{D_N}}_\opr>\frac 12\right)
	&\lesssim \overline{\P}^N\left(\norm{\Psi^{-1/2}\Psiiid\Psi^{-1/2}-I_{D_N}}_\opr>
	\frac 12\right)^{k/4}\\
	&\lesssim D_N^{k/4} \exp\left(-\frac{kc_{\eta,T}}{16L_N^2\norm{\Psi\inv}_\opr^2}\eta NT\right),
\end{align*}
meaning that \cref{prop1} is true for the particle system \cref{model}. This completes the proof 
of Proposition \ref{prop1}.

\subsection{Proof of Theorem \ref{th1}}
We begin by considering the $\R^{D_N}$-valued random vector $E_N=Z_{N}-\sp{e}{\varphi}_N$, that is
$$(E_N)_j=\frac{\sigma}{NT}
\sum_{i=1}^N\int_0^T  e_j \star \mu_t^N(X_t^{i,N}) dW_t^i.$$ 
By the It\^o isometry,
\begin{equation}
	\label{EmPsi} \E\left[(E_N)_j(E_N)_k\right]=\frac{\sigma^2}{NT}\Psi_{jk} + O(N^{-2}).
\end{equation}
\begin{lem}
	\label{Em}For $E_N$ defined as above, we have
	\begin{equation*}
		\E\left[\norm{E_N}^4\right]\lesssim\frac{D_NL_N^2}{(NT)^2}.
	\end{equation*}
\end{lem}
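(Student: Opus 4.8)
The plan is to reduce the bound on $\E[\norm{E_N}^4]$ to a coordinatewise estimate and to control each coordinate by the Burkholder--Davis--Gundy (BDG) inequality for continuous martingales. First I would fix $1\le j\le D_N$ and observe that
\[
\mathcal M^j_t:=\sum_{i=1}^N\int_0^t (e_j\star\mu_s^N)(X_s^{i,N})\,dW_s^i,\qquad t\in[0,T],
\]
is a continuous $(\mathcal F_t)$-martingale (a finite sum of It\^o integrals against the Brownian motions driving \eqref{model}), with $(E_N)_j=\sigma(NT)^{-1}\mathcal M^j_T$ and quadratic variation $\langle\mathcal M^j\rangle_T=\sum_{i=1}^N\int_0^T (e_j\star\mu_s^N)(X_s^{i,N})^2\,ds$. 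Since $\mu_s^N$ is a probability measure, $|(e_j\star\mu_s^N)(x)|\le\norm{e_j}_\infty$ for every $x$, which gives the pathwise bound $\langle\mathcal M^j\rangle_T\le NT\norm{e_j}_\infty^2$.

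Next I would apply BDG to obtain $\E[(\mathcal M^j_T)^4]\lesssim\E[\langle\mathcal M^j\rangle_T^2]\le (NT)^2\norm{e_j}_\infty^4$, hence $\E[(E_N)_j^4]\lesssim\sigma^4(NT)^{-2}\norm{e_j}_\infty^4$; if the $e_j$ are complex valued, the same argument is applied to the real and imaginary parts of $\mathcal M^j$ separately. Finally, by the Cauchy--Schwarz inequality in $\R^{D_N}$,
\[
\norm{E_N}^4=\Bigl(\sum_{j=1}^{D_N}(E_N)_j^2\Bigr)^2\le D_N\sum_{j=1}^{D_N}(E_N)_j^4,
\]
so taking expectations and using $\sum_{j=1}^{D_N}\norm{e_j}_\infty^4\le\bigl(\sum_{j=1}^{D_N}\norm{e_j}_\infty^2\bigr)^2=L_N^2$ yields
\[
\E\bigl[\norm{E_N}^4\bigr]\lesssim\sigma^4\,\frac{D_N}{(NT)^2}\sum_{j=1}^{D_N}\norm{e_j}_\infty^4\lesssim\frac{D_NL_N^2}{(NT)^2},
\]
which is the asserted bound.

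I do not expect a genuine obstacle here: the computation is routine once one notes that, although each integrand $(e_j\star\mu_s^N)(X_s^{i,N})$ depends on all particles through $\mu_s^N$ and so is not adapted "per particle'' in a naive sense, the process $\mathcal M^j$ is jointly adapted to $(\mathcal F_t)$ and the martingale/BDG machinery applies verbatim. The only points needing a little care are this adaptedness remark and the pathwise sup-bound $\norm{e_j\star\mu_s^N}_\infty\le\norm{e_j}_\infty$, both of which are immediate.
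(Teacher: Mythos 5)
Your proof is correct and follows essentially the same route as the paper: reduce to coordinates via Cauchy--Schwarz, apply BDG to the martingale $\mathcal M^j$, use the pathwise bound $\|e_j\star\mu_t^N\|_\infty\le\|e_j\|_\infty$, and conclude with $\sum_j\|e_j\|_\infty^4\le L_N^2$. The only cosmetic difference is that you bound the quadratic variation pathwise before taking expectations, while the paper pushes the square inside by Jensen first; both yield the same estimate.
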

\begin{proof}
	Using Jensen's inequality, the Burkholder-Davis-Gundy inequality, and independence of the $W^i$, we get
	\begin{align*}
		\E\left[\norm{E_N}^4\right]&\le \sigma^4 D_N\sum_{j=1}^{D_N}\E\left[\left(E_N\right)_j^4\right]\lesssim
		\sigma^4 D_N \sum_{j=1}^{D_N}\E\left[\left(\frac1{(NT)^2}\sum_{i=1}^N\int_0^T
		\left( e_j \star \mu_t^N(X_t^{i,N}) \right)^2\d t\right)^2\right]\\
		&\le \frac{\sigma^4D_N}{N^2 T^3}\sum_{j=1}^{D_N} \sum_{i=1}^N
				\int_0^T\E\left[\left(e_j \star \mu_t^N(X_t^{i,N}) \right)^4\right]\d t\\
		&\le\frac{\sigma^4 D_N}{(NT)^2}\sum_{j=1}^{D_N}\norm{e_j}_\infty^4\lesssim \frac{D_NL_N^2}{(NT)^2},
	\end{align*}
	which is the claim.
\end{proof}

For the asymptotic behaviour in $\norm\cdot_\star$ and $\norm\cdot_N$, we consider orthogonal projections and their error with respect to our introduced norms. We begin with some intermediate estimates.


\begin{prop}
	\label{projections}
	Let $\Pi^N: L^2(\|\cdot\|_\star) \to S_N$ (resp. $\Pi^{\star}$) be the orthogonal projection with respect to $\norm{\cdot}_N$ (resp. $\norm{\cdot}_{\star}$) and recall that $\Pi^{\star}\vphi =\vphi_{\star}$. Suppose that \cref{assm} is satisfied. Then we have
	\begin{enumerate}
		\item[(i)] $\E\left[\norm{\vphi_{N}-\Pi^N\varphi}_N^2\normalfont 
  1_{\Lambda_N\cap \Omega_N}\right]\lesssim \frac{D_N}{NT}.$
		\item[(ii)]	$\E\left[\norm{\vphi_{N}-\Pi^N\varphi}_N^2\normalfont 1_{\Lambda_N\cap \Omega_N^c}\right]\lesssim\frac1{NT}.$
		\item[(iii)] $\E\left[\norm{\Pi^N\left(\varphi-\vphi_{\star}\right)}_\star^2\normalfont 1_{\Omega_N}\right]\lesssim N^{-1/2}(1+L_N)\|\varphi-\vphi_{\star}\|_{\star\star}^2
  +O(N^{-1}) +o\left(\|\varphi-\vphi_{\star}\|_{\star}^2\right)$.
	\end{enumerate}
\end{prop}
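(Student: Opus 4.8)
The plan is to prove the three assertions of Proposition~\ref{projections} in turn, leveraging the matrix concentration bound from Proposition~\ref{prop1} together with the moment estimates on the noise vector $E_N$ from Lemma~\ref{Em}. For (i), I would start from the explicit representation of the estimator $\vphi_N = \sum_j (\theta_N)_j e_j$ with $\theta_N = \Psi_N^{-1} Z_N$, and note that $\Pi^N\varphi$ is the $\|\cdot\|_N$-orthogonal projection, so in coordinates $\Pi^N\varphi$ corresponds to the vector $\Psi_N^{-1}\langle e,\varphi\rangle_N$. Hence $\vphi_N - \Pi^N\varphi$ corresponds to $\Psi_N^{-1}(Z_N - \langle e,\varphi\rangle_N) = \Psi_N^{-1} E_N$, and therefore $\|\vphi_N - \Pi^N\varphi\|_N^2 = E_N^\top \Psi_N^{-1} E_N$. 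On the event $\Lambda_N$ we have $\|\Psi_N^{-1}\|_{\opr} \le c_{\eta,T}^{1/2}(NT)^{1/2}/(L_N\log(NT)^{1/2})$, but more usefully on $\Lambda_N\cap\Omega_N$ the norms $\|\cdot\|_N$ and $\|\cdot\|_\star$ are equivalent, which translates (via Lemma~\ref{ONBchange}) into $\Psi^{-1/2}\Psi_N\Psi^{-1/2}$ being within $\tfrac12$ of the identity, so $\|\Psi_N^{-1}\|_{\opr} \lesssim \|\Psi^{-1}\|_{\opr}$. Then
\[
\E\bigl[\|\vphi_N-\Pi^N\varphi\|_N^2 1_{\Lambda_N\cap\Omega_N}\bigr] \le \E\bigl[\|\Psi_N^{-1}\|_{\opr}\|E_N\|^2 1_{\Omega_N}\bigr],
\]
and I would like to bound this by $\|\Psi^{-1}\|_{\opr}\,\E[\|E_N\|^2]$ up to constants. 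Since $\E[\|E_N\|^2] = \sum_j \E[(E_N)_j^2] \lesssim \tfrac{\sigma^2}{NT}\operatorname{tr}(\Psi) + O(N^{-2}D_N)$ by \eqref{EmPsi}, and $\operatorname{tr}(\Psi)\le D_N$ because $\Psi$ has operator norm controlled and more directly because $\Psi_{jj}=\|e_j\|_\star^2\le\|e_j\|_{L^2}^2$... actually one uses $\Psi\preccurlyeq \|\mu\|$-type bound; the cleaner route is $\operatorname{tr}(\Psi^{1/2}\Psi^{-1}\Psi^{1/2})$-style cancellation. In any case $E_N^\top\Psi_N^{-1}E_N$ on $\Omega_N$ is comparable to $E_N^\top\Psi^{-1}E_N$, whose expectation is $\tfrac{\sigma^2}{NT}\operatorname{tr}(\Psi^{-1}\Psi) + O(N^{-2}\cdot) = \tfrac{\sigma^2 D_N}{NT} + o(\cdot)$, giving exactly the claimed $D_N/(NT)$ rate.

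For (ii), the point is that $\Omega_N^c$ has probability at most $(NT)^{-\eta+1}$ by Proposition~\ref{prop1}(ii), which is a very small quantity, while on $\Lambda_N$ we still have the crude bound $\|\Psi_N^{-1}\|_{\opr}^2 \le c_{\eta,T}NT/(L_N^2\log(NT))$. Therefore
\[
\E\bigl[\|\vphi_N-\Pi^N\varphi\|_N^2 1_{\Lambda_N\cap\Omega_N^c}\bigr] = \E\bigl[E_N^\top\Psi_N^{-1}E_N\, 1_{\Lambda_N\cap\Omega_N^c}\bigr] \le \bigl(\E[\|\Psi_N^{-1}\|_{\opr}^2\|E_N\|^4]\bigr)^{1/2}\P(\Omega_N^c)^{1/2},
\]
by Cauchy--Schwarz. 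The first factor is controlled by $\tfrac{(NT)^{1/2}}{L_N(\log NT)^{1/2}}\bigl(\E[\|E_N\|^4]\bigr)^{1/2} \lesssim \tfrac{(NT)^{1/2}}{L_N(\log NT)^{1/2}}\cdot\tfrac{D_N^{1/2}L_N}{NT} = \tfrac{D_N^{1/2}}{(NT)^{1/2}(\log NT)^{1/2}}$ using Lemma~\ref{Em}, and multiplying by $\P(\Omega_N^c)^{1/2}\le (NT)^{(-\eta+1)/2}$ with $\eta\ge 5$ makes the whole thing decay much faster than $1/(NT)$, so in particular $\lesssim 1/(NT)$ as claimed.

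For (iii), I would decompose $\Pi^N(\varphi - \vphi_\star) = \Pi^N\varphi - \vphi_\star$ since $\vphi_\star = \Pi^\star\varphi \in S_N$ and $\Pi^N$ fixes $S_N$; the idea is that $\Pi^N\varphi - \Pi^\star\varphi$ measures the discrepancy between the two projections onto the same space $S_N$ but with respect to the two (close) inner products $\langle\cdot,\cdot\rangle_N$ and $\langle\cdot,\cdot\rangle_\star$. Writing things in the $\langle\cdot,\cdot\rangle_\star$-orthonormal basis $\bar e = \Psi^{-1/2}e$, the projection $\Pi^N$ becomes $\bar\Psi_N^{-1}\langle\bar e,\cdot\rangle_N$ with $\bar\Psi_N = \Psi^{-1/2}\Psi_N\Psi^{-1/2}$, while $\Pi^\star$ becomes the identity coefficient map; so $\Pi^N(\varphi-\vphi_\star)$ in coordinates is $\bar\Psi_N^{-1}\bigl(\langle\bar e,\varphi-\vphi_\star\rangle_N - \bar\Psi_N\langle\bar e,\varphi-\vphi_\star\rangle_\star\bigr) = \bar\Psi_N^{-1}(I-\bar\Psi_N)\langle\bar e,\varphi-\vphi_\star\rangle_\star$ — wait, one must be careful since $\varphi-\vphi_\star$ is $\star$-orthogonal to $S_N$, so $\langle\bar e,\varphi-\vphi_\star\rangle_\star = 0$ and the coordinate vector is simply $\bar\Psi_N^{-1}\langle\bar e,\varphi-\vphi_\star\rangle_N$. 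Thus on $\Omega_N$, $\|\Pi^N(\varphi-\vphi_\star)\|_\star^2 = \|\bar\Psi_N^{-1}v_N\|^2$ with $v_N := \langle\bar e,\varphi-\vphi_\star\rangle_N$ a mean-zero-ish random vector, bounded by $\|\bar\Psi_N^{-1}\|_{\opr}^2\|v_N\|^2 \le 4\|v_N\|^2$ on $\Omega_N$. It then remains to estimate $\E[\|v_N\|^2]$: each component $(v_N)_j = \langle\bar e_j, \varphi-\vphi_\star\rangle_N$ has expectation under $\overline\P^N$ equal to $\langle\bar e_j,\varphi-\vphi_\star\rangle_\star = 0$ up to $O(N^{-1})$ correction terms (the same exchangeability expansion as in the proof of Proposition~\ref{prop1}(ii)), and its variance is $O(N^{-1})$ times a quantity involving $\|\bar e_j\|_\infty$ and $\|\varphi-\vphi_\star\|_{\star\star}$; summing over $j$ produces the $N^{-1/2}(1+L_N)\|\varphi-\vphi_\star\|_{\star\star}^2$ term, the $O(N^{-1})$ from the bias corrections, and the $o(\|\varphi-\vphi_\star\|_\star^2)$ from replacing the empirical $\langle\cdot,\cdot\rangle_N$ mean by $\langle\cdot,\cdot\rangle_\star$ plus the fact that the leading $\star$-inner product term vanishes by orthogonality. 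Finally one transfers from $\overline\P^N$ to $\P^N$ via the change-of-measure bound \eqref{Hoffmann} (or \eqref{changeofmeasu}), exactly as in the proof of Theorem~\ref{Th2.3}. The main obstacle I anticipate is part (iii): carefully tracking the variance of $v_N$ and showing that the cross terms coming from the $U$-statistic structure of $\langle\bar e_j,\varphi-\vphi_\star\rangle_N$ assemble into precisely the stated combination of $N^{-1/2}(1+L_N)\|\varphi-\vphi_\star\|_{\star\star}^2$, $O(N^{-1})$, and $o(\|\varphi-\vphi_\star\|_\star^2)$ — in particular making rigorous why the $\|\cdot\|_{\star\star}$-norm (rather than $\|\cdot\|_\star$) is the right object for the dominant fluctuation term, and ensuring the $\|\Psi^{-1}\|_{\opr}$-dependence hidden in $\|\bar e_j\|_\infty$ does not blow up under Assumption~\ref{assm}.
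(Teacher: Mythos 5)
Your arguments for parts (i) and (ii) are correct and coincide with the paper's: the identity $\|\vphi_{N}-\Pi^N\vphi\|_N^2=E_N^\top\Psi_{N}^{-1}E_N$, the eigenvalue localisation of $\Psi^{-1/2}\Psi_{N}\Psi^{-1/2}$ on $\Omega_N$ giving $E_N^\top\Psi_{N}^{-1}E_N 1_{\Omega_N}\le 2E_N^\top\Psi^{-1}E_N$, the trace cancellation $\E[E_N^\top\Psi^{-1}E_N]\approx\frac{\sigma^2}{NT}\operatorname{tr}(\Psi^{-1}\Psi)=\frac{\sigma^2D_N}{NT}$ via \eqref{EmPsi}, and for (ii) the Cauchy--Schwarz split against $\P(\Omega_N^c)^{1/2}$ combined with Lemma~\ref{Em}. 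One caution on (i): your first intermediate bound $\E[\|\Psi_{N}^{-1}\|_\opr\|E_N\|^2 1_{\Omega_N}]$ would cost you a condition number (it leads to $\|\Psi^{-1}\|_\opr\operatorname{tr}(\Psi)/(NT)$, not $D_N/(NT)$); the quadratic-form route you switch to afterwards is the one that must be used.

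For (iii) you have the correct skeleton --- reduce on $\Omega_N$ to $\|\Pi^N g\|_\star^2\lesssim\|\Psi^{-1}\|_\opr\sum_j\langle e_j,g\rangle_N^2$ with $g=\vphi-\vphi_\star$, use $\langle e_j,g\rangle_\star=0$, and estimate the fluctuation of $\langle e_j,g\rangle_N$ about $\langle e_j,g\rangle_\star$ --- but the decisive estimate is exactly the part you defer, and it is where every term of the claimed bound is produced. Three ingredients are missing. First, $\|g\|_\infty$ is not uniformly bounded in $N$, and you need the quantitative bound $\|g\|_\infty\lesssim 1+L_N^{1/2}\|\Psi^{-1}\|_\opr^{1/2}$, obtained by expanding $\vphi_\star=\sum_j\langle\bar e_j,\vphi\rangle_\star\bar e_j$ and combining Cauchy--Schwarz with $\sum_j\|\bar e_j\|_\infty^2\le L_N\|\Psi^{-1}\|_\opr$ and $\sum_j\langle\bar e_j,\vphi\rangle_\star^2\le\|\vphi\|_\star^2$. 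Second, the change-of-measure step forces you to bound \emph{fourth} moments under $\overline{\P}^N$ rather than variances: by \eqref{changeofmeasu}, $\E_{\P^N}[\xi^2]\lesssim\E_{\overline{\P}^N}[\xi^4]^{1/2}$, so the relevant computation is a fourth-moment bound for the polarized degenerate $U$-statistics $U_{k,N}(g,e_j)$ from the decomposition \eqref{Ustati}; the probability-transfer inequality \eqref{Hoffmann} you also invoke is of no use for moment bounds. Third, for the leading term $U_{1,N}(g,e_j)$, which is a centred i.i.d.\ average under $\overline{\P}^N$, one uses $\E[W_N^4]=3N^{-2}\E[Z_1^2]^2+N^{-3}\E[Z_1^4]$ and H\"older to obtain $\E_{\overline{\P}^N}[U_{1,N}(g,e_j)^4]^{1/2}\lesssim\|e_j\|_\infty^2(\|g\|_\star^2+\|g\|_{\star\star}^2)\bigl(N^{-1}+N^{-3/2}\|e_j\|_\infty^2\|g\|_\infty^2\bigr)$; this is precisely where $\|g\|_{\star\star}$, and not $\|g\|_\star$, enters, answering the question you raise at the end. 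Assembling these and invoking Assumption~\ref{assm} in the form $L_N\|\Psi^{-1}\|_\opr\lesssim\sqrt{NT/\log(NT)}$ converts $N^{-1}\|\Psi^{-1}\|_\opr L_N(\|g\|_\star^2+\|g\|_{\star\star}^2)$ and $N^{-3/2}\|\Psi^{-1}\|_\opr^2L_N^3(\|g\|_\star^2+\|g\|_{\star\star}^2)$ into $N^{-1/2}(1+L_N)\|g\|_{\star\star}^2+o(\|g\|_\star^2)$, and $\|\Psi^{-1}\|_\opr^2L_N^2N^{-2}$ into $O(N^{-1})$. Without these steps the stated bound does not follow from your outline.
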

\begin{rem}
	Depending on the choice of $\eta$, (ii) actually decays faster than stated. However, as (i) is the slowest term in \cref{th1}, a convergence rate of $(NT)^{-1}$ is sufficient. We will see in our proofs that this requires $\eta\ge 3$, see \cref{smallesteta}. Considering we are analysing the projection error in its respective norm, the proof of this lemma hardly differs from that of \cite[Lemma 6.3]{CG20b}. \qed
\end{rem}
\begin{proof}[Proof of \cref{projections}]
(i) As $\Pi^N\varphi-\varphi=\argmin_{f\in S_N}\norm{f-\varphi}_N$, a straightforward computation gives
		\begin{equation}
			\Pi^N\varphi=\sum_{j=1}^{D_N}(\widehat a_N)_je_j,\quad \widehat a_N=\Psi_{N}^{-1}\sp{e}{\varphi}_N.
			\label{PiN}
		\end{equation}
		By definition of $E_N$,\ $\theta_{N}-\widehat a_N=\Psi_{N}^{-1}E_N$. Hence,
		\begin{equation}
			\label{PiNchange}
			\norm{\vphi_{N} -\Pi^N\varphi}_N^2=\norm{\sum_{j=1}^{D_N}\left(\Psi_{N}^{-1}E_N\right)_je_j}_N^2
			=E_N^\top\Psi_{N}^{-1}E_N.
		\end{equation}
		On $\Omega_N$, all eigenvalues of $\Psi^{-1/2}\Psi_{N}\Psi^{-1/2}$ are in $[\frac 12, \frac 32]$, which implies that
		\begin{equation}
			\norm{\Psi^{1/2}\Psi_{N}^{-1}\Psi^{1/2}}_\opr\le 2,
			\label{PsiEV}
		\end{equation}
		such that $E_N^\top\Psi_{N}^{-1}E_N 1_{\Omega_N}\le 2E_N^\top\Psi^{-1}E_N$. Together with \cref{EmPsi} and \cref{Em}, we have deduced by \cref{assm}:
		\begin{align*}
			\E\left[\norm{\vphi_{N}-\Pi^N\varphi}_N^2 1_{\Lambda_N\cap\Omega_N}\right]&
			\le 2\E\left[E_N^\top\Psi^{-1}E_N\right]
			\lesssim\sum_{j,k=1}^{D_N}\left(\Psi^{-1}\right)_{jk}\left(\frac1{NT} \Psi_{jk}+\frac1{N^2}\right)
			\\&\lesssim \frac{D_N}{NT}+\frac{D_N\norm{\Psi\inv}_\opr}{N^2}\lesssim \frac{D_N}{NT}+\frac{D_N}{L_N\sqrt{N^3\log(NT)}}
			\\&\lesssim\frac{D_N}{NT}.
		\end{align*}
(ii) By \cref{prop1}, \cref{PiNchange} and \cref{Em}, we have
		\begin{align*}
			\E\left[\norm{\vphi_{N}-\Pi^N\varphi}_N^2 1_{\Lambda_N\cap\Omega_N^c}\right]&
			\le\E\left[\norm{\Psi_{N}^{-1}}_\opr^2 1_{\Lambda_N}\norm{E_N}^4\right]^{1/2}\prob{\Omega_N^c}^{1/2}
			\\&\le\sqrt{D_N}(NT)^{-(\eta/2+1)}.
		\end{align*}
		(iii) We define $g=\varphi-\vphi_{\star}$ and write the projection of $\Pi^Ng$ according to \cref{PiN} as an inner product. Using \cref{ONBchange}, Parseval's identity, and \cref{PsiEV} we deduce the following inequalities on $\Omega_N$.
		\begin{equation*}
			\begin{aligned}
				\norm{\Pi^Ng}_\star^2 &=\norm{
    {e^{\top}}\Psi_{N}^{-1}\sp{ e}{g}_N}^2_\star 1_{\Omega_N}=
				\norm{\bar {e}^{\top} \Psi^{1/2}\Psi_{N}^{-1}\Psi^{1/2}}\sp{\bar e}{g}_\star^2 
				\\&=\norm{\Psi^{1/2}\Psi_{N}^{-1}\Psi^{1/2}\sp{\bar e}{g}_N}^2 
				\le 4\norm{\sp{\bar e}{g}_N}^2
				\\& \le 4 \norm{\Psi^{-1}}_\opr \sum_{j=1}^{D_N} \sp{e_j}{g}_N^2.
			\end{aligned}
		\end{equation*}
  		Due to the identity $g=\varphi-\vphi_{\star}$, we have that $\sp{e_j}{g}_{\star}=0$ for all $j=1,\ldots, D_N$. Hence,  
		\begin{align} \label{zeroide}
			\sum_{j=1}^{D_N}\E\left[\sp{e_j}{g}_N^2\right] = \sum_{j=1}^{D_N}\E\left[\left(\sp{e_j}{g}_N - \sp{e_j}{g}_{\star}\right)^2\right].
		\end{align}
  For the latter term we want to use the decomposition \cref{Ustati}. However, we note that $\|g\|_{\infty}$ is not necessarily uniformly bounded in $N$, and thus we need a precise estimate of this norm. We recall that $\|\vphi\|_{\star}\leq \|\vphi\|_{\infty} \leq K_{1}$ for all $\vphi\in\Phi$. Also note the identity
  \[
  \varphi_{\star} = \sum_{j=1}^{D_N} \sp{\bar{e}_j}{\vphi}_{\star}\bar{e}_j. 
  \]
  Consequently, we deduce that 
  \begin{align*}
  \left\|\varphi_{\star}  \right\|_{\infty}& \leq  \sum_{j=1}^{D_N} |\sp{\bar{e}_j}{\vphi}_{\star}|\cdot
  \|\bar{e}_j\|_{\infty} \leq \left( \sum_{j=1}^{D_N} \sp{\bar{e}_j}{\vphi}_{\star}^2 \right)^{1/2} 
  \left( \sum_{j=1}^{D_N} \|\bar{e}_j\|_{\infty}^2 \right)^{1/2}  \\[1.5 ex]
  & \leq L_N^{1/2} \norm{\Psi^{-1}}_\opr^{1/2} \left( \sum_{j=1}^{D_N} \sp{\bar{e}_j}{\vphi}_{\star}^2 \right)^{1/2} .  
  \end{align*}
  On the other hand, we have
  \[
  \left\|\varphi_{\star} \right\|_{\star}^2=  \sum_{j=1}^{D_N} \sp{\bar{e}_j}{\vphi}_{\star}^2 
  \leq  \left\|\varphi  \right\|_{\star}^2 \leq K_{1}^2. 
  \]
  Hence, we obtain the inequality
  \begin{align} \label{infnormes}
  \left\|g \right\|_{\infty} \lesssim \left(1+ L_N^{1/2} \norm{\Psi^{-1}}_\opr^{1/2}\right). 
  \end{align}
  In the next step, we use the decomposition \cref{Ustati} and the polarisation identity $\sp{f}{g}_{\star}
  =(\|f+g\|_{\star}^2 - \|f-g\|_{\star}^2)/4$, to get  
  \begin{align} \label{decompoa}
  \sp{e_j}{g}_N - \sp{e_j}{g}_{\star} = U_{1,N}(g,e_j) + U_{2,N}(g,e_j) + U_{3,N}(g,e_j) + O\left(N^{-1}\|g\|_{\infty} \|e_j\|_{\infty}\right),
  \end{align}
  where $U_{k,N}(g,e_j):= (U_{k,N}(g+e_j) -  U_{k,N}(g-e_j))/4$ for $k=1,2,3$.  Now, we need to estimate the variance of the terms $U_{k,N}(g,e_j)$. For this purpose we will use the same change of measure device as proposed in Section \ref{secPr2.3}. In particular, due to Cauchy-Schwarz inequality \cref{changeofmeasu}, it suffices to find an appropriate bound for the fourth moment of $U_{k,N}(g,e_j)$, $k=1,2,3$, under the probability measure $\overline{\P}^N$. 

  We start with the U-statistics $U_{2,N}(g,e_j)$ and $U_{3,N}(g,e_j)$. Replacing $g$ (resp. $e_j$) by $g/\|g\|_{\infty}$ (resp. by $e_j/\|e_j\|_{\infty}$), we conclude as in Section \ref{secPr2.3}:
  \begin{align} 
  \E_{\overline{\P}^N} \left[ U_{2,N}(g,e_j)^4 \right]^{1/2} &\lesssim \|g\|_{\infty}^2 \|e_j\|_{\infty}^2 N^{-2},
  \nonumber \\[1.5 ex]
   \E_{\overline{\P}^N} \left[ U_{3,N}(g,e_j)^4 \right]^{1/2} &\lesssim \|g\|_{\infty}^2 
  \|e_j\|_{\infty}^2 N^{-3}. \label{ustatestim}
  \end{align}
  Next, we treat the term $U_{1,N}(g,e_j)$. Under $\overline{\P}^N$ we deal with i.i.d. random variables. Observe that for a statistic $W_N:=N^{-1} \sum_{i=1}^N Z_i$, where $Z_i$'s are i.i.d. random variables with $\E[Z_1]=0$ and $\E[Z_1^4]<\infty$, we have that $\E[W_N^4]= 3N^{-2}\E[Z_1^2]^2 + N^{-3}\E[Z_1^4]$. Using this observation and applying H\"older inequality, a straightforward computation shows that 
  \begin{align} \label{iidesti}
  \E_{\overline{\P}^N}\left[U_{1,N}(g,e_j)^4\right]^{1/2} \lesssim \|e_j\|_{\infty}^2 
  \left(\|g\|_{\star}^2 + \|g\|_{\star\star}^2 \right)\left( N^{-1} + N^{-3/2} \|e_j\|_{\infty}^2 \|g\|_{\infty}^2
  \right).
  \end{align}
  Now, we put everything together. According to \cref{changeofmeasu}, we deduce that 
  \[
  \E_{\P^N} \left[U_{k,N}(g,e_j)^2 \right] \leq C \E_{\overline{\P}^N}\left[U_{1,N}(g,e_j)^4\right]^{1/2}
  \qquad \text{for } k=1,2,3.
  \]
  Plugging the inequalities \cref{ustatestim} and \cref{iidesti}  into \cref{zeroide}, and taking into account the bound in \cref{infnormes}, we conclude that 
  \begin{align*}
\E\left[\norm{\Pi^N\left(\varphi-\varphi_{\star}\right)}_\star^2\normalfont 1_{\Omega_N}\right]
&\lesssim \norm{\Psi^{-1}}_\opr^2 L_N^2 \left(N^{-2} +N^{-3}\right) 
\\
&+ N^{-1} \norm{\Psi^{-1}}_\opr L_N
\left(\|g\|_{\star}^2 + \|g\|_{\star\star}^2 \right) \\
&+ N^{-3/2}\norm{\Psi^{-1}}_\opr^2 L_N^3 \left(\|g\|_{\star}^2 + \|g\|_{\star\star}^2 \right). 
  \end{align*}
  Applying Assumption \ref{assm}, we obtain the statement of the proposition. 
\end{proof}

\noindent
Now, we proceed with the proof of Theorem \ref{th1}.
Let us start with the asymptotic result in the empirical norm. We split
	\begin{equation*}
		\label{projsplit}
		\begin{aligned}
			\norm{\wphi_{N}-\varphi}_N^2&=\inf_{f\in S_N}\norm{f-\varphi}_N^2+\norm{\wphi_{N}-\Pi^N\varphi}_N^2\\&	
			=\inf_{f\in S_N}\norm{f-\varphi}_N^2+\norm{\vphi_{N}-\Pi^N\varphi}_N^2\left( 1_{\Lambda_N\cap\Omega_N}+ 1_{\Lambda_N\cap\Omega_N^c}\right)+\norm{\varphi}_N^2 1_{\Lambda_N^c}\\
			&=:\inf_{f\in S_N}\norm{f-\varphi}_N^2+ \operatorname{I}_1 + \operatorname{I}_2 + \operatorname{I}_3.
		\end{aligned}
	\end{equation*}
	Recall that $\E[\operatorname{I}_1]$ and $\E[\operatorname{I}_2]$ are treated in \cref{projections}. Furthermore, we have $\E{\operatorname{I}_3}\lesssim1/N$ due to \cref{prop1}. This gives us
	\begin{equation*}
		\E\left[\norm{\wphi_{N}-\varphi}_N^2\right]\lesssim \E\left[\inf_{f\in S_N}\norm{f-\varphi}_N^2\right]+\frac{D_N}{NT}.
	\end{equation*}
	Taking the supremum over $\varphi\in \Phi$ gives the first claim. 
 
 For the asymptotic results in $\norm{\cdot}_\star$, we split
	\begin{align*}
		\label{estsplit}
		\norm{\wphi_{N}-\varphi}_\star^2&=\norm{\wphi_{N}-\varphi}_\star^2\left( 1_{\Lambda_N\cap\Omega_N}+ 1_{\Lambda_N\cap\Omega_N^c}\right)+\norm{\varphi}_\star^2 1_{\Lambda_N^c}\\
		&=: \operatorname{II}_1 + \operatorname{II}_2 + \operatorname{II}_3.
	\end{align*}
	For $\operatorname{II}_1$, we set $g=\varphi-\varphi_{\star}$. Note that $\Pi^N\Pi^{\star}=\Pi^{\star}$, therefore
	\begin{equation*}
		\Pi^N\varphi-\varphi=\Pi^N(g+\Pi^\star\varphi)-\varphi=\Pi^Ng-g.
	\end{equation*}
	So we split the terms in $\operatorname{II}_1$ into
	\begin{align*}
		\norm{\wphi_{N}-\varphi}_\star^2&\le\norm{\wphi_{N}-\Pi^N\varphi}_\star^2+\norm{\Pi^N\varphi-\varphi}_\star^2
		\\&\le \norm{\wphi_{N}-\Pi^N\varphi}_\star^2+\norm{\Pi^Ng}_\star^2 + \norm{g}_\star^2.
	\end{align*}
	Since $\norm\cdot_N$ and $\norm\cdot_\star$ are equivalent on $\Omega_N$, we have
	\begin{equation*}
		\E\left[\norm{\wphi_{N}-\Pi^N\varphi}_\star^2 1_{\Lambda_N\cap\Omega_N}\right]\le
		2\E\left[\norm{\wphi_{N}-\Pi^N\varphi}_N^2 1_{\Lambda_N\cap\Omega_N}\right]\lesssim\frac{D_N}{NT}.
	\end{equation*}
	By \cref{projections} (iii), we have
	\begin{equation*}
		\E\left[\norm{\Pi^Ng}_\star^2 1_{\Lambda_N\cap\Omega_N}\right]+\norm{g}_\star^2
		\lesssim (1+o(1))\norm g_\star^2 + N^{-1/2}(1+L_N)\norm g_{\star\star}^2 +O(N^{-1}).
	\end{equation*}
	Moving on to $\operatorname{II}_2$, we have
	\begin{align*}
		\E\left[\norm{\wphi_{N}-\varphi}_\star^2 1_{\Lambda_N\cap\Omega_N^c}\right]\le
  \E\left[\norm{\vphi_{N}}_\star^2 1_{\Lambda_N\cap\Omega_N^c}\right]+\norm{\varphi}_\star^2\prob{\Omega_N^c}.
	\end{align*}
	Recalling the definition of $\vphi_{N}$ given in \cref{LSE} and that $(Z_{N})_j=(E_N)_j+\langle e_j,\varphi \rangle_N$, we have
	\begin{equation*}
		\begin{aligned}
			\norm{\vphi_{N}}^2_\star&=\left(\Psi_{N}^{-1} Z_{N}\right)^\top\Psi\Psi_{N}^{-1} Z_{N}\le\norm{\Psi_{N}^{-1}}_\opr^2\norm{\Psi}_\opr\norm{Z_{N}}^2
			\\&=
			\norm{\Psi_{N}^{-1}}_\opr^2\norm{\Psi}_\opr\left(\norm{\sp{ e}{\varphi}_N}^2+\norm{E_N}^2\right).
		\end{aligned}
	\end{equation*}
	Since $\varphi$ is bounded,
	\begin{align*}
		\sum_{j=1}^{D_N}\E\left[\sp{e_j}{\varphi}_N^4\right]&\le \frac1{(NT)^2}\sum_{j=1}^{D_N}\int_0^T\E\left[e_j\star\mu^N_t(X^{1,N}_t)^4\varphi\star\mu^N_t(X^{1,N}_t)^4\right]\d t
		\\&\le\sum_{j=1}^{D_N}\frac{\norm{e_j}^4_\infty}{(NT)^2}\int_0^T\E\left[\varphi\star\mu^N_t(X^{1,N}_t)^4\right]\d t
		\le\frac{L_N^2\norm\varphi_\infty^4}{(NT)^2},
	\end{align*}
	such that
	\begin{align*}
		\E\left[\left(\norm{\sp{ e}{\varphi}_N}^2+\norm{E_N}^2\right) 1_{\Lambda_N\cap\Omega_N^c}\right]&
		\le\left(\E\left[\norm{\sp{ e}{\varphi}_N}^4\right]^{1/2}+\E\left[\norm{E_N}^4\right]^{1/2}\right)\prob{\Omega_N^c}^{1/2}
		\\&\le \left(\frac{\norm\varphi_\infty^2L_N}{NT}+\frac{\sqrt D_NL_N}{NT}\right)(NT)^{-\frac{\eta-1}{2}}
		\\&\le\left(\frac{\norm\varphi_\infty^2}{\sqrt{D_N}}+1\right)L_N(NT)^{-\frac{\eta}{2}}.
	\end{align*}
	Finally, with \cref{assm} and \cref{Em}, and the fact that $\norm{\Psi}_\opr\le L_N$, we obtain 
	\begin{equation*}
		\label{smallesteta}
	\begin{aligned}
		\E\left[\norm{\vphi_{N}}_\star^2 1_{\Lambda_{N}\cap\Omega_{N}^c}\right]
		&\le\E\left[\norm{\Psi_{N}^{-1}}_\opr^2\norm{\Psi}_\opr\left(\norm{\sp{e}{\varphi}_N}^2+\norm{E_{N}}^2\right) 1_{\Lambda_{N}\cap\Omega_{N}^c}\right]
		\\&\le c_{\eta,T}\left(\frac{\norm\varphi_\infty^2}{\sqrt D_N}+1\right)(NT)^{-(\eta-1)/2}.
	\end{aligned}
	\end{equation*}
 Last but not least, we have that 
 \[
\E[\operatorname{II}_3] \lesssim \P(\Lambda_N^c)
 \]
which completes  the proof  due to Proposition \ref{prop1}.

\subsection{Proof of \cref{propPsi}}
Let us denote by $\lambda_{\min}(Q)$ the smallest eigenvalue of a symmetric matrix $Q\in \R^{D_N \times D_N}$. Since 
\begin{align*}
\lambda_{\min}(Q_1Q_2)&\geq
\lambda_{\min}(Q_1) \lambda_{\min}(Q_2)\qquad \text{and} \\[1.5 ex]
\lambda_{\min}\left(T^{-1}\int_0^T Q_t dt\right) &\geq 
T^{-1}\int_0^T \lambda_{\min}(Q_t) dt,
\end{align*} 
for all positive semi-definite matrices $Q_1,Q_2, (Q_t)_{t\geq0}$, we obtain via    \cref{Psiidenti}:
\begin{align*}
\lambda_{\min}(\Psi) &\geq \frac{1}{2A_NT} \int_0^T  \min_{k=1,\ldots,D_N} (\widehat{\mu}_t(k/A_N))^2 
\lambda_{\min}(R_t)dt \\[1.5 ex]
&\geq \frac{\exp\left(-2D_N^2/(c_2A_N^2)\right)}{2A_NT} \int_0^T  g_2^2(t) 
\lambda_{\min}(R_t)dt
\end{align*}
where $R_t \in \R^{D_N\times D_N}$ is a Toeplitz matrix given as $R_t^{kl}=\widehat{\mu}_t((l-k)/A_N)$. To deduce a lower bound for 
$\lambda_{\min}(R_t)$, we introduce the discrete-time Fourier transform 
\[
F(\omega):= \sum_{n\in \Z} \widehat{\mu}_t(n/A_N) \exp(-i\omega n), \qquad \omega \in [-\pi,\pi].
\] 
A well-known result states that 
\[
\lambda_{\min}(R_t)\geq \min_{\omega \in [-\pi,\pi]} F(\omega),
\]
and hence we need to compute $F(\omega)$. For this purpose, we will apply 
the Poisson summation formula. Define the function $f(x):=\widehat{\mu}_t(x/A_N) \exp(-i\omega x)$ for $x\in \R$ and $\mathcal{F}f(z):= \int_{\R} f(x) \exp(-i2\pi zx)dx$. The Poisson summation formula states that 
\[
F(\omega) = \sum_{n\in \Z} f(n) = \sum_{n\in \Z} \mathcal{F}f(n).
\]
By the Fourier inversion formula we obtain the identity
\[
\mathcal{F}f(n) = \int_{\R} \widehat{\mu}_t(x/A_N) 
\exp\left(-ix(\omega+2\pi n)\right) dx = \frac{A_N}{\sqrt{2\pi}} 
\mu_t\left(A_N(\omega +2\pi n)/\pi\right). 
\]
Due to Assumption \ref{lowecond}, we deduce that 
\begin{align*}
F(\omega) &\geq  \frac{A_N g_1(t)}{\sqrt{2\pi}} 
\sum_{n\in \Z} \exp\left( - \frac{A_N^2(\omega +2\pi n)^2}{c_1\pi^2}\right)\\[1.5 ex]
&\geq \frac{A_N g_1(t)}{\sqrt{2\pi}} \exp\left( - \frac{A_N^2}{c_1}\right)(1+o(1)).
\end{align*}
Putting things together, we conclude that 
\[
\lambda_{\min}(\Psi) \geq \frac{\exp\left(-2D_N^2/(c_2A_N^2)\right)
\exp\left(-A_N^2 /c_1 \right)}{2\sqrt{2\pi} T} \int_{0}^T
g_1(t) g_2(t)^2 dt. 
\]
This implies the statement of Proposition \ref{propPsi}.

\subsection{Proof of Proposition \ref{doublestar}}
Consider the decomposition 
\[
\vphi = \sum_{j=1}^{\infty} c_j e_j= \sum_{j=1}^{D_N} c_j e_j
+ \sum_{j=D_N+1}^{\infty} c_j e_j=: \vphi_1 + \vphi_2.
\]
In this scenario it holds that 
\[
\vphi - \Pi^{\star} \vphi = \vphi_2 - \Pi^{\star} \vphi_2.
\]
It trivially holds that 
\[
\|\vphi_2\|_{\star \star}^2 \leq \|\vphi_2\|_{\infty}^2 \lesssim 
A_N^{-1}\left(\sum_{j=D_N+1}^{\infty} |c_j| \right)^2.
\]
To estimate the term 
$\|\Pi^{\star} \vphi_2\|_{\star \star}^2$, we introduce the decomposition
\[
\Pi^{\star} \vphi_2 = \sum_{j=1}^{D_N} \langle \bar{e}_j, \vphi_2\rangle_{\star} \bar{e}_j.
\]
Now, we deduce that 
\[
\|\Pi^{\star} \vphi_2\|_{\star \star}^2 \lesssim
\|\Pi^{\star} \vphi_2\|_{\infty}^2.
\]
We obtain the inequality
\[
\|\Pi^{\star} \vphi_2\|_{\infty} \leq \sum_{j=1}^{D_N} |\langle \bar{e}_j, \vphi_2\rangle_{\star}|  \cdot \|\bar{e}_j\|_{\infty}
\]
In the next step we will find a bound for the term $|\langle \bar{e}_j, \vphi_2\rangle_{\star}|$. First, we note 
\[
\langle \bar{e}_j, \vphi_2\rangle_{\star} = \sum_{k=1}^{D_N} 
\sum_{l=D_N+1} \Psi^{-1/2}_{jk} c_l \sp{e_k}{e_l}_{\star}.
\]
We already know that 
\[
\sp{e_k}{e_l}_{\star}=\frac{1}{2A_NT} \int_0^T \widehat{\mu}_t(-k/A_N) \widehat{\mu}_t(l/A_N) \widehat{\mu}_t((l-k)/A_N) dt
\]
Due to Assumption \ref{assmu0}, the leading order of the above term is achieved for $l=D_n+1$ and $k=D_n/2$. Hence, we conclude that 
\[
|\langle \bar{e}_j, \vphi_2\rangle_{\star}| \lesssim D_N \|\Psi^{-1}\|^{1/2}_{\text{op}}
\exp\left(-\frac{3D_N^2\zeta^2}{4A_N^2} \right) \sum_{j=D_N+1}^{\infty} |c_j|
\]
Consequently, we obtain the inequality
\[
\|\Pi^{\star} \vphi_2\|_{\infty} \lesssim A_N^{-1/2} D_N^2 
\|\Psi^{-1}\|_{\text{op}} \exp\left(-\frac{3D_N^2\zeta^2}{4A_N^2}\right)
\sum_{j=D_N+1}^{\infty} |c_j|,
\]
which completes the proof.

\appendix

\section{Maximal inequalities for \(U\)-statistics}
\noindent
Let \(\mathcal{G}\) be a class of real-valued functions on \(\mathbb{R}^K\) and \((U_N(g):\) \(g\in \mathcal{G})\) be \(U\)-process of the form 
\begin{eqnarray*}
U_N(g)=\frac{1}{(N)_K}\sum_{i_1,\ldots,i_K=1}^N g(X_{i_1},\ldots, X_{i_K})
\end{eqnarray*}
where \(X_1,\ldots,X_N\) is a sequence of iid random variables with common distribution \(\P\)  and \((N)_K=N(N-1)\cdot\ldots\cdot (N-K+1).\) If for each  \(g\in \mathcal{G},\) 
\[
\E_{X\sim \P}[g(x_1,\ldots,x_{i-1},X,x_{i+1},\ldots,x_K)]=0,\quad i=1,\ldots,K,
\]
for arbitrary \(x_1,\ldots,x_{i-1},x_{i+1},\ldots,x_K\in \mathbb{R},\) then the process \((U_N(g),\) \(g\in \mathcal{G})\) is called degenerate. 
\begin{theo}
\label{thm:ustat-max}
Suppose that $\mathcal{G}$ is a  class of uniformly bounded functions with \(\sup_{g\in \mathcal{G}}\|g\|_\infty\leq G\) and the \(U\)-process \((U_N(g),\) \(g\in \mathcal{G})\) is degenerate. Then it holds %
\begin{align*}
 \Bigl\{\mathbb{E}\sup_{g\in\mathcal{G}}\left|N^{K/2}U_N(g)\right|^{q}\Bigr\}^{1/q} & \lesssim Gp^K\,  \mathsf{DI}(\mathcal{G},\|\cdot\|_\infty,\psi_{p/K})
\end{align*}
for any \(p\geq q.\)
\end{theo}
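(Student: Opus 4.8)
The plan is to follow the two-stage template already used in Section~\ref{secPr2.3} for the leading $U$-statistic $\widetilde U_{1,N}$ and the martingale term: first control a \emph{single} bounded, completely degenerate $U$-statistic in every $L^s$, and then promote this to a bound on the supremum over $\mathcal G$ by chaining against the metric entropy. Write $T_N(g):=N^{K/2}U_N(g)$. Since degeneracy is preserved under linear combinations, for $g,g'\in\mathcal G$ the increment $T_N(g)-T_N(g')=N^{K/2}U_N(g-g')$ is again a rescaled completely degenerate $U$-statistic whose kernel has sup-norm at most $\|g-g'\|_\infty$. Thus it suffices to (i) prove a moment bound of the form
\[
\bigl\|N^{K/2}U_N(h)\bigr\|_{L^{s}}\ \lesssim_K\ \|h\|_\infty\, s^{K/2}\qquad(s\ge 2)
\]
for any completely degenerate kernel $h$ of order $K$ with $\|h\|_\infty<\infty$, and (ii) feed the resulting increment estimate into a Dudley-type maximal inequality.

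For step (i) I would argue as in the classical theory of canonical $U$-statistics. First apply the de~la~Pe\~na--Montgomery-Smith decoupling inequality — valid for $\E\Phi(\cdot)$ with any convex increasing $\Phi$, in particular $\Phi(x)=x^{s}$, and also when one places $\sup_{g\in\mathcal G}$ inside — to replace $U_N(h)$ by its decoupled analogue built from $K$ independent copies of the sample. Then randomise each of the $K$ blocks with independent Rademacher signs (symmetrisation), so that, conditionally on the observations, one is left with a homogeneous Rademacher chaos of degree $K$ whose coefficients are $(N)_K^{-1}$ times values of $h$. Because $h$ is completely degenerate its Hoeffding decomposition is trivial, so only this top-degree chaos is present; bounding every coefficient by $\|h\|_\infty$ shows its conditional $L^2$-norm is $\lesssim_K \|h\|_\infty N^{-K/2}$ (here the combinatorial normalisation $(N)_K^{-1}$ is exactly what cancels the number of fully overlapping index tuples in the second moment, so no $N$-dependence survives), and hypercontractivity of Rademacher chaos, $\|X\|_{L^s}\le(s-1)^{K/2}\|X\|_{L^2}$ for all $s\ge2$, gives the displayed bound after taking expectations. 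Alternatively this is the bounded-kernel specialisation of the Arcones--Gin\'e moment inequalities and could be cited. In particular $T_N$ has sub-Weibull increments of exponent $2/K$, and choosing moment order $p/K$ yields $\|T_N(g)-T_N(g')\|_{L^{p/K}}\lesssim_K (p/K)^{K/2}\|g-g'\|_\infty\le p^{K}\|g-g'\|_\infty$ for any $p\ge2$.

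For step (ii) I would run the standard generic-chaining argument in $L^{p/K}$: fix minimal $2^{-j}\operatorname{diam}(\mathcal G)$-nets of $(\mathcal G,\|\cdot\|_\infty)$, telescope $T_N(g)-T_N(g_0)$ along the chain, bound the $L^{p/K}$-norm of the maximum over the at most $\mathcal N_j\mathcal N_{j+1}$ links at level $j$ by $(\mathcal N_j\mathcal N_{j+1})^{K/p}$ times the mesh times the increment constant, and sum the geometric-type series, recognising the sum as a constant multiple of $\mathsf{DI}(\mathcal G,\|\cdot\|_\infty,\psi_{p/K})$; this is the same mechanism as the Dudley integral inequality invoked for $\widetilde U_{1,N}$, only with the sub-Gaussian $\psi_{e,2}$ replaced by the heavier $\psi_{p/K}$. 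Combining the chaining bound $\lesssim_K p^{K}\,\mathsf{DI}(\mathcal G,\|\cdot\|_\infty,\psi_{p/K})$ with the one-point estimate $\|T_N(g_0)\|_{L^{q}}\lesssim_K \|g_0\|_\infty q^{K/2}\le G\,p^{K}$ (which is dominated by the diameter contribution to the Dudley integral), and passing from the $L^{p/K}$-norm of the supremum down to its $L^{q}$-norm for $q\le p$ by monotonicity of $L^{r}$-norms, gives the assertion.

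I expect the single-statistic bound of step (i) to be the main obstacle: the decoupling--symmetrisation--hypercontractivity chain has to be carried through with enough care that it applies uniformly over $\mathcal G$ (so that it can later be placed under $\sup_{g\in\mathcal G}$) and that the constants are genuinely free of $N$, which requires tracking how the normalisation $(N)_K^{-1}$ interacts with the number of degenerate contractions at each order. By contrast, step (ii) is routine and essentially a repetition of the chaining already performed in the body of the paper, now in a weaker Orlicz space.
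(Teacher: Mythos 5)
Your argument is correct in substance but proceeds along a genuinely different route from the paper. The paper's proof of \cref{thm:ustat-max} is a two-line reduction: it invokes the maximal inequality for degenerate $U$-processes from \cite[Section~5]{Sher94}, which is stated in terms of the random pseudometric $d_{U_N}(f,g)=[U_N(|f-g|^2)]^{1/2}$, and simply observes that for uniformly bounded classes $d_{U_N}(f,g)\le\|f-g\|_\infty$ and $U_N(|g|^2)\le G^2$ almost surely, so the (random) entropy integral appearing in Sherman's bound is dominated by the deterministic one with respect to $\|\cdot\|_\infty$. You instead rebuild Sherman's inequality from first principles: the de~la~Pe\~na--Montgomery-Smith decoupling plus Rademacher randomisation (legitimate precisely because the kernels are canonical) plus degree-$K$ hypercontractivity yields the single-kernel bound $\|N^{K/2}U_N(h)\|_{L^s}\lesssim_K s^{K/2}\|h\|_\infty$, and a Pisier/Kosorok-type chaining then produces the entropy integral; this is self-contained, makes transparent where the degeneracy and the normalisation $(N)_K^{-1}$ enter, and in fact delivers the sharper factor $p^{K/2}$ in place of $p^K$, at the cost of redoing classical machinery that the paper outsources to a citation. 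One small repair: your final step, passing from the $L^{p/K}$-norm of the supremum to its $L^q$-norm ``by monotonicity,'' only covers $q\le p/K$, not all $q\le p$ as claimed. The fix is free: run the same chain directly in $L^{p}$ (increments cost $p^{K/2}\|g-g'\|_\infty$, a maximum over $n$ links costs $n^{1/p}\le n^{K/p}=\psi_{p/K}^{-1}(n)$), which bounds $\|\sup_{\mathcal G}|T_N|\|_{L^p}$ and hence all $L^q$-norms with $q\le p$ by the same Dudley integral $\mathsf{DI}(\mathcal{G},\|\cdot\|_\infty,\psi_{p/K})$; the same device absorbs the harmless replacement of $\mathcal N_j\mathcal N_{j+1}$ by $\mathcal N_{j+1}^2$ in your link count.
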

\begin{proof}
We can apply the maximal inequality from \cite[Section~5]{Sher94} and note that for uniformly bounded classes \(\mathcal{G}\)
\[
d_{U_N}:=\bigl[U_N\bigl(|f-g|^2\bigr)\bigr]^{1/2}\leq \|f-g\|_{\infty} 
\]
 as well as \(U_N\bigl(|g|^2\bigr)\leq G^2\) with probability \(1\) for all \(f,g\in \mathcal{G}.\)
\end{proof}

\subsection*{Acknowledgement} 
Mark Podolskij and Shi-Yuan Zhou gratefully acknowledge financial support of ERC Consolidator Grant 815703 “STAMFORD: Statistical Methods for High Dimensional Diffusions”.

\bibliographystyle{chicago}

\end{document}